\newcommand{\Frac}[2]{\frac{\displaystyle#1}{\displaystyle#2}}
\newtheorem{definition}{Definition}
\newtheorem{theorem}{Theorem}% 定义一个新的定理环境
\newtheorem{corollary}{Corollary}
\newtheorem{lemma}{Lemma}
\newtheorem{example}{Example}
\newtheorem{remark}{Remark}
\title{\bf Orderings of the finite mixture with modified proportional hazard rate model}
\author{
	Lina Guo \thanks{The corresponding author. E-mail: 2022212115@nwnu.edu.cn}\hspace{-1.5mm} ~~
	Rongfang Yan  \\
	{College of Mathematics and Statistics }\\
	{Northwest Normal University, Lanzhou 730070, China}\\ \medskip   \medskip   
	}
\begin{document}
	\maketitle
\begin{abstract}
		
	In this paper, we consider finite mixture models with modified proportional hazard rates. Sufficient conditions for the  usual stochastic order and the hazard order are established under chain majorization. We study stochastic comparisons under different settings of T-transform for various values of chain majorization. We establish usual stochastic order and hazard rate order between two mixture random variables when a matrix of model parameters and mixing proportions changes to another matrix in some mathematical sense. Sufficient conditions for the star order and Lorenz order are established under weakly supermajorization. The results of this paper are illustrated with numerical examples.
		
		\medskip
		
		\noindent {\bf Keywords}: Finite mixture models, Modified proportional hazard rate model, Multivariate chain majorization order, Stochastic order.
		
		\noindent{\bf MSC 2010:} Primary60E15; Secondary 62G30
		
\end{abstract}
	\newpage
\section{Introduction}%背景、应用、综述、模型的概念
	Finite mixture, as a very effective tool for modeling heterogeneity, have found applications in many areas of reliability theory, survival analysis. In many applications, it is often the case that the data arise from a mixture population of two or more heterogeneous subpopulations, see \cite{amini2017stochastic}. In other words, the finite mixture models represents datasets that are assumed to possess more than one distribution combined in different proportions. For example, the manufactured engineering items are often heterogeneous due to different reasons such as the quality of resources and components used in the production process, the quality of raw materials, human errors, different work shifts, environmental conditions, and so on, produced items have often different lifetime distributions. When the items are mixed they lead to a heterogeneous population, see \cite{finkelstein2008failure} and \cite{cha2013failure}. In this regard, the finite mixture model can be used for modeling lifetime date arising from a finite amount of heterogeneous subpopulations.

	Let $\textbf{X}=(X_{1},X_{2},\cdots,X_{n})$ be a vector of $n$ random variables with the cumulative distribution function, survival function, probability density function and hazard rate function are defined as $F_{X}(\cdot),\bar{F}_{X}(\cdot)$ $f_{X}(\cdot)$ and $r_{X}(\cdot)$, respectively. Suppose that we have $n$ homogeneous infinity subgroups of units with $X_{i}$ denoting the lifetime of a unit in the $i$th subpopulation, $i=1,2,\cdots,n$. Then, let $C(X,p)$ be a random variable representing the mixture of units that are drawn from these $n$ subpopulations.
	Evidently, its distribution, survival and density functions can be given by
    
    $$F_{C_{n}(X,p)}(x)=\sum_{i=1}^{n}p_{i}F_{X_{i}}(x),$$

   $$\bar{F}_{C_{n}(X,p)}(x)=\sum_{i=1}^{n}p_{i}\bar{F}_{X_{i}}(x),$$

   $$f_{C_{n}(X,p)}(x)=\sum_{i=1}^{n}p_{i}f_{X_{i}}(x),$$
    respectively, where $p_{i}(>0)$ is the mixing proportion such that $\sum_{i=1}^{n}p_{i}=1$.
	Since the hazard rate is a conditional feature, the corresponding mixed hazard rate is defined by modifying the conditional weight, see \cite{navarro2004obtain}, \cite{cha2013failure}, \cite{finkelstein2008failure}.
	\cite{navarro2017stochastic} discussed that stochastic comparisons of two mixtures with distorted distributions in coherent systems. 
	\cite{hazra2018stochastic} discussed that stochastic comparisons of two finite mixtures where corresponding random variables follow proportional hazard rate(PHR), proportional reversed hazards(PHR) or accelerated lifetime model, using the concept of multivariate chain majorization order.
	\cite{bansal2019stochastic} discussed a particular case of the multivariate mixture model in which the baseline distribution function is represented in terms of a copula and study stochastic comparisons among the two random vectors.
	\cite{albabtain2020stochastic} showed that stochastic comparisons of finite mixture models under  Weibull distribution was carried out by adjusting different weight functions.
	\cite{sattari2021stochastic} showed usual stochastic order, hazard rate order and likelihood ratio order of finite mixture models with generalized Lehmann distributed components thought chain majorization. It discussed stochastic comparisons under various parameter conditions.
	\cite{barmalzan2022dispersion} showed that star, convex transform orders, dispersive, right spread and mean residual life orders of mixture exponential distributions by majorization orders.
	\cite{barmalzan2022orderings} discussed that usual stochastic order, hazard rate order and reversed hazard rate order for two finite mixtures with location-scale family distributed components under chain majorization order. 
	\cite{panja2022stochastic} derived some stochastic comparison results for two finite mixture models where corresponding random variables follow proportional odds(PO), PHR or PRH model. 
	\cite{nadeb2022new} derived usual stochastic order, hazard rate order, reverse hazard rate order, likelihood ratio order and dispersive order of the finite mixture in the sense of vector majorization. 
	\cite{shojaee2023some} discussed that stochastic comparisons of arithmetic and geometric mixture models in the sense of usual stochastic order, upper orthant order, weak hazard rate order and likelihood ratio order.
	\cite{bhakta2023stochastic} discussed that stochastic comparisons of two finite mixtures with inverted-Kumaraswamy distribution by majorization conditions. 
	\cite{fang2023usual} obtained the first sample maxima is larger than the second sample maxima with respect to the usual stochastic order by chain majorization. 
	\cite{bhakta2023stochastic} showed stochastic comparisons of the finite mixtures of general family of distributions by majorization, P-larger majorization, reciprocally and chain majorization.
	Some scholars studied $\alpha$-mixture models.
	\cite{asadi2019alpha} derived the constant elasticity of substitution of $\alpha$-mixture.
	\cite{shojaee2021some} derived some ageing properties, additive and multiplicative and bending properties of $\alpha$-mixtures. It discussed usual stochastic order and hazard rate order of finite $\alpha$-mixture and continuous $\alpha$-mixture.
	\cite{barmalzan2021stochastic} discussed usual stochastic order, hazard rate order, reversed hazard rate order and dispersive order of the finite $\alpha$-mixture model under majorization.
	\cite{shojaee2022stochastic} showed usual stochastic order, hazard rate and aging properties of the generalized finite $\alpha$-mixture. At the same time, it described conditional characteristics, additive and multiplicative models of the generalized finite $\alpha$-mixture.
	\cite{bhakta2024ordering} discussed usual stochastic order and reversed hazard rate order between two multiple-outlier finite $\alpha$-mixtures.

   	\cite{balakrishnan2018modified} defined a new distribution, which is called modified proportional hazard rates (MPHR) model. The MPHR model is very important model in reliability theory and survival analysis. A random variable $X$ is said to follow the MPHR model with tilt parameter $\alpha$, modified proportional hazard rate $\lambda$ and baseline survival function $\bar{F}$(denoted by $MPHR(\alpha;\lambda;\bar{F})$) if and only if,
    \begin{equation}
    	\bar{F}(x;\alpha,\lambda)=\frac{\alpha\bar{F}^{\lambda}(x)} {1-\bar{\alpha}\bar{F}^{\lambda}(x)},\quad 
    	x,\lambda,\alpha\in\mathcal{R}^+,\quad
    	\bar{\alpha}=1-\alpha.
    	\label{1}
    \end{equation}

    The finite mixture with modified proportional hazard rate in (\ref{1}) contains the following serval models as special cases.
    	 If $\alpha=1$, (\ref{1}) reduces to the PHR model.
    	 If $\lambda=1$, (\ref{1}) reduces to the PO model.

    The MPHR model in includes some well-known distributions such as the extended exponential and extended Weibull distributions(\cite{marshall2007life}), extended Pareto distribution(\cite{ghitany2005marshall}) and extended Lomax distribution(\cite{ghitany2007marshall}). 
    \cite{navarro2017stochastic} showed the  origin of MPHR, and discussed the relative hazard rate order by exploring the range of different values for $\alpha$.
    \cite{yan2023stochastic} Studied the usual stochastic order and hazard rate order of the second-order statistics from MPHR under different parameter conditions through majorization order.  
   
    Through the study of the literature, We extend this study to stochastic comparisons of mixture models under the MPHR model. We discuss the usual stochastic order or the hazard order by chain majorization, and star order or Lorenz order by weakly supermajorization of the finite mixture with MPHR model. We analyze two scenarios when discussing the usual stochastic order, one scenario involves mixing proportions and tilt parameters being different, while the other involves mixing proportions and modified proportional hazard rates being different. Additionally, we provided specific numerical examples and used line graphs to validate our research findings.

    In this paper, we discuss some stochastic comparisons of two finite mixture models with the components having MPHR distributions. Section \ref{sec-2} reviews some basic concepts that will be used in the sequel. Section \ref{sec-3} discusses the usual stochastic order of two finite mixture models with MPHR distributed components. Section \ref{sec-4} discusses the hazard rate order of two finite mixture models with MPHR distributed components.
    Section \ref{sec-5} discuss the star order and Lorenz order. Several examples are also presented to illustrate all
    the established results. Finally, some concluding remarks are made in Section \ref{sec-6}.

\section{Preliminaries}\label{sec-2}  %预备知识
     In this section,  we present some basic definitions and useful lemmas that are essential for subsequent developments.
     
     Let X and Y are two non-negative continuous random variables with density functions $f_{X}(t)$, $f_{Y}(t)$,
     distribution functions $F_{X}(t)$ and $F_{Y}(t)$,
     survival functions $\bar{F}_{X}(t)=1-F_{X}(t)$, $\bar{F}_{Y}(t)=1-F_{Y}(t)$, 
     and hazard rate functions $r_{X}(t)=f_{X}(t)/\bar{F}_{X}(t)$, $r_{Y}(t)=f_{Y}(t)/\bar{F}_{Y}(t)$.
     Let $F^{-1}_{X}(t)$ and $F^{-1}_{Y}(t)$
     be the right continuous inverses of 
     $F_{X}(t)$ and $F_{Y}(t)$, respectively. 
     In addition, we use $a\overset{\rm sgn}=b$ to denote that both sides of the equality have the same sign.
\begin{definition}\label{def-1}
	A random variable $X$ is said to be smaller than $Y$ in the 
	\begin{enumerate} [\rm (i)]
		\item  usual stochastic order (denoted by $X \le_{\rm st} Y$) if  $\overline F_X(t) \le \overline F_Y(t)$, for all $t \in \mathbb R;$
		\item  { hazard rate order} (denoted by $X\leq_{\rm hr}Y$) if
		$r_{X}(t)\ge r_{Y}(t)$ for all $t\in\mathbb{R}$, or  equivalently, if $\overline {F}_Y(t)/\overline {F}_X(t)$ is increasing in $t\in\mathbb{R};$
		\item  { star order} (denoted by $X\leq_{\rm \star}Y$) if
		$F^{-1}_{Y}[F_{X}(x)]/x$ is increasing in $x\in\Bbb{R}_{+};$
		\item Lorenz order (denoted by $X\le_{lorenz} Y$) if $L_{X}(p)\ge L_{Y}(p)$ for all $p\in[0,1]$, where the Lorenz curve $L_{X}$, corresponding to $X$ is defined as
		 \begin{center}
		 	$L_{X}(p)=\frac{\int_{0}^{p}F^{-1}(u)\mathrm{d}u}
		 	{\int_{0}^{1}F^{-1}(u)\mathrm{d}u}$
		 \end{center}
	\end{enumerate}	
\end{definition}
     It is to be noted that $X\le_{\rm hr} Y\Rightarrow X\le_{\rm st} Y$ and $X\le_{\star} Y\Rightarrow X\le_{Lorenz} Y$, see \cite{shaked2007stochastic}.
\begin{definition}\label{def-2}
    Let $a_{1}\le a_{2}\le \cdots \le a_{n}$ and $b_{1} \le b_{2}\le \cdots \le b_{n}$ be the increasing arrangements of
    $\textbf{a}=(a_{1},a_{2},\cdots ,a_{n})$ and $\textbf{b}=(b_{1},b_{2},\cdots ,b_{n})$ , 
    respectively. 
    \begin{enumerate}[\rm(i)]
    \item $\textbf{a}$ is said to majorize $\textbf{b}$, 
    denoted by $\textbf{a}\overset{m}\succeq \textbf{b}$,
    if $\sum_{i=1}^{j} a_{(i)}\le\sum_{i=1}^{j}b_{(i)}$,
    for $i=1,\cdots , n-1$, 
    and $\sum_{i=1}^{n}a_{(i)}=\sum_{i=1}^{n} b_{(i)};$
    \item $\textbf{a}$ is said to weakly supermajorize $\textbf{b}$,
    denoted by $\textbf{a}\overset{w}\succeq \textbf{b}$,
    if $\sum_{i=1}^{j} a_{(i)}\le\sum_{i=1}^{j}b_{(i)}$,
    for all $i=1,2,\cdots,n$.
    \end{enumerate}
    
    It is to be noted that 
    $\textbf{a}\overset{w}\succeq \textbf{b}
    \Rightarrow
    \textbf{a}\overset{m}\succeq \textbf{b}$,
    see \cite{marshall1979inequalities}.
\end{definition}

$\\$
Any T-transform matrix has the form
   \begin{center} %转换矩阵T的公式
	$T=\omega I_{n}=(1-\omega)\Pi_{n}$
   \end{center}

   Where $0\le \omega \le 1$, $\Pi_{n}$ is a permutation matrix. $I_{n}$ is a identity matrix.
   We say that two T-transform matrices $T_{1}=\omega_{1}I_{n}+(1-\omega_{1})\Pi_{1}$ and $T_{2}=\omega_{2}I_{n}+(1-\omega_{2})\Pi_{2}$ have the same structure if $\Pi_{1}=\Pi_{2}$. It is well-known that a finite product of T-transform matrices with the same structure is also a T-transform matrix, while this product may not be a T-transform matrix if its elements do not have the same structure, see \cite{balakrishnan2014stochastic}.\\

\begin{definition}\label{def-3}
	Consider the $m\times n$ matrices $A=\left \{a_{ij}\right \} $ and $B=\left \{b_{ij}\right\}$ with rows $a_{1},\cdots,a_{m}$ and $b_{1},\cdots,b_{m}$, respectively.
	\begin{enumerate}[\rm(i)]
	\item If ${\textbf{a}_{i}}\overset{\rm{m}}{\succeq}{\textbf{b}_{i}}$, for $i=1,\cdots,m$,
	then we say that matrix A is larger than the matrix B in row majorization and is denoted by ${A} >^{row}{B};$
	\item If there exists a finite set of $n \times n$ T-transform matrices $T_{1},\cdots,T_{k}$ such that $B=AT_{1}T_{2}\cdots T_{k}$,
    then we say that the matrix A is larger than matrix B in chain majorization and is denoted by $A\gg B$.
	\end{enumerate}
	
  It is to be noted that $A\gg B\Longrightarrow {A}>^{row} {B}$, see \cite{marshall1979inequalities} for an elaborate discussion on the theory of vector and
  matrix majorizations and their applications.
\end{definition}
\begin{definition}\label{def-4}
	$\left[\text{\cite{marshall1979inequalities}}\right]$ A real-valued function G defined on a set $\mathbb{A}\subseteq \mathbb{R}$ is said to be Schur-convex (Schur-concave) on $\mathbb{A}$, if
	\begin{center}
		$\textbf{a}\overset{m}\succeq\textbf{b} \Longrightarrow G(a)\ge G(b)$ \quad
		for any $a,b\in\mathbb{A}$.
	\end{center}
\end{definition}
\begin{lemma}\label{lem-1}
	$\left[\text{\cite{barmalzan2022orderings}}\right]$
	Consider a differentiable function G: ${R_{+}}^{4}\to R_{+}$

		\begin{center}
				$G(A)\ge(\le)G(B)$ for all A, B such that $A\in \mathcal K_{2}(\mathcal L_{2})$, and $A\gg B$ 
		\end{center}          
	
	if and only if
	\begin{enumerate}[\rm(i)]
		\item $G(A)=G(A \Pi)$ for all permutation matrices $\Pi$, and for all $A\in 
		\mathcal K_{2}(\mathcal L_{2});$
		\item $\sum_{i=1}^{2}(a_{ik}-a_{ij})\left(G_{ik}(A)-G_{ij}(B)\right) \ge (\le)0$ for all $j,k=1,2$, and for all $A\in \mathcal K_{2}(\mathcal L_{2})$, where $G_{ij}(A)=\partial G(A)/ \partial a_{ij}$.
	\end{enumerate}
\end{lemma}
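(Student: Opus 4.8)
The plan is to prove the two directions of this characterization separately, first reducing the combinatorics of chain majorization to a one-parameter problem. Because $n=2$ there is a single non-trivial permutation matrix $\Pi$, so every admissible T-transform $T_\omega=\omega I_2+(1-\omega)\Pi$ has the same structure; by the product rule for equally structured T-transforms recalled just before Definition~\ref{def-3}, any finite product $T_1\cdots T_k$ is again a single T-transform. Hence $A\gg B$ is equivalent to $B=AT_\omega$ for some $\omega\in[0,1]$, and everything reduces to analysing the curve $\phi(\omega):=G(AT_\omega)$ on $[0,1]$. I will argue the $\ge$/$\mathcal K_2$ case; the $\le$/$\mathcal L_2$ case is obtained by reversing all inequalities.

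For necessity, condition~(i) comes from using the hypothesis in both directions along the degenerate transform $\omega=0$: taking $B=A\Pi$ gives $A\gg A\Pi$, hence $G(A)\ge G(A\Pi)$, while $A\Pi\gg(A\Pi)\Pi=A$ with $A\Pi$ still in the permutation-invariant set $\mathcal K_2$ gives the reverse inequality, so $G(A)=G(A\Pi)$. For condition~(ii), differentiating the explicit entries $(AT_\omega)_{i1}=\omega a_{i1}+(1-\omega)a_{i2}$ and $(AT_\omega)_{i2}=\omega a_{i2}+(1-\omega)a_{i1}$ yields
\begin{equation*}
\phi'(\omega)=\sum_{i=1}^{2}(a_{i1}-a_{i2})\bigl(G_{i1}(AT_\omega)-G_{i2}(AT_\omega)\bigr).
\end{equation*}
Since $\phi(1)=G(A)$ is maximal on $\phi([0,1])$ by hypothesis, the one-sided derivative satisfies $\phi'(1)\ge 0$, which is exactly the inequality in~(ii) for the column pair $(k,j)=(1,2)$ evaluated at the endpoint $B=AT_1=A$; the remaining pairs are trivial.

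For sufficiency, fix $A\in\mathcal K_2$ and $B=AT_\omega$ and study $\phi$ again. Using $T_{1-\omega}=\Pi T_\omega=T_\omega\Pi$ together with the column-permutation invariance~(i) gives the symmetry $\phi(\omega)=\phi(1-\omega)$, and in particular $\phi(0)=\phi(1)=G(A)$; so it suffices to prove that $\phi$ attains its maximum at the endpoints. The key computation is $(AT_\omega)_{i1}-(AT_\omega)_{i2}=(2\omega-1)(a_{i1}-a_{i2})$, so applying~(ii) at the matrix $AT_\omega$ and dividing by the positive factor $2\omega-1$ shows $\phi'(\omega)\ge 0$ for $\omega\in(\tfrac12,1]$; the symmetry then gives $\phi'(\omega)\le 0$ on $[0,\tfrac12)$. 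Integrating, $\phi(\omega)\le\phi(1)=G(A)$ for all $\omega\in[0,1]$, i.e.\ $G(B)\le G(A)$.

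The step I expect to be the crux is legitimising the use of~(ii) \emph{all along the path} rather than only at $A$. This requires the admissible set to be closed under the averaging family, i.e.\ $AT_\omega\in\mathcal K_2$ for every $\omega$; the identity $(AT_\omega)_{i1}-(AT_\omega)_{i2}=(2\omega-1)(a_{i1}-a_{i2})$ shows that all row differences are rescaled by the common factor $2\omega-1$, so the sign pattern defining $\mathcal K_2$ is preserved, but this closure (and its $\mathcal L_2$ counterpart) must be verified carefully. The remaining delicate point is upgrading the one-sided endpoint derivative obtained in necessity into the global inequality of sufficiency, which is precisely what the symmetry $\phi(\omega)=\phi(1-\omega)$ furnished by~(i) accomplishes; once the closure is secured, the Schur-convex reading of~(ii) in the sense of Definition~\ref{def-4} closes the argument.
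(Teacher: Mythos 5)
Your proposal is correct, but there is no in-paper proof to compare it against: Lemma~\ref{lem-1} is imported from \cite{barmalzan2022orderings} and stated without proof, so your argument supplies something the paper only cites. That said, your route is the natural (and essentially the standard) one for the $2\times 2$ setting, and it holds up under scrutiny. The reduction of chain majorization to a single transform $B=AT_\omega$ is legitimate because with only one nontrivial permutation all T-transforms share the same structure, so their products collapse to one $T_\omega$; necessity of (i) follows from applying the hypothesis to both pairs $(A,A\Pi)$ and $(A\Pi,A)$, which requires the observation (true, and worth stating) that $\mathcal{K}_2$ and $\mathcal{L}_2$ are stable under column permutation since the defining sign condition $(a_i-a_j)(b_i-b_j)\le 0$ (respectively $\ge 0$) is invariant under the simultaneous swap; necessity of (ii) is the first-order condition $\phi'(1)\ge 0$ at the maximizing endpoint of $\phi(\omega)=G(AT_\omega)$, and your chain-rule formula for $\phi'$ is right, with the diagonal pairs $(k,j)$ vanishing and $(2,1)$ duplicating $(1,2)$. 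For sufficiency, the two ingredients you flag as the crux are exactly the ones that make the argument close: the identity $(AT_\omega)_{i1}-(AT_\omega)_{i2}=(2\omega-1)(a_{i1}-a_{i2})$ shows both that $AT_\omega$ stays in $\mathcal{K}_2$ (the row-difference products pick up the factor $(2\omega-1)^2\ge 0$), so condition (ii) may be invoked along the whole path, and that (ii) at $AT_\omega$ reads $(2\omega-1)\phi'(\omega)\ge 0$; together with the symmetry $\phi(\omega)=\phi(1-\omega)$ furnished by (i), this forces $\phi$ to be monotone away from $\omega=\tfrac12$ toward the endpoints, where $\phi(0)=\phi(1)=G(A)$, giving $G(B)\le G(A)$. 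One last point in your favor: you tacitly corrected the typo in the statement, where $G_{ij}(B)$ in condition (ii) should read $G_{ij}(A)$ (both partial derivatives are evaluated at the same matrix); the proof only works under that reading, which is the one intended in \cite{barmalzan2022orderings}.
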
       
\begin{lemma}\label{lem-2}
	$\left[\text{\cite{barmalzan2022orderings}}\right]$
	 Consider a differentiable function H: ${R_{+}}^{2}\to R_{+}$, and let the function $G_{n}: {R_{+}}^{2n}\to R_{+}$ be defined as $G_{n}(A)=\sum_{i=1}^{n} H(a_{1i},a_{2i})$. If $G_{2}$ satisfies Lemma \ref{lem-1}, then $G_{n}(A)\ge G_{n}(B)$, where $ A \in \mathcal K_{n}(L_{n})$ and $B=AT$.
\end{lemma}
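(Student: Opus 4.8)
The plan is to exploit the fact that a single T-transform perturbs only two columns of the parameter matrix, thereby collapsing the $n$-column inequality onto the two-column inequality already characterized in Lemma~\ref{lem-1}. Write the T-transform as $T=\omega I_n+(1-\omega)\Pi_n$; since a genuine T-transform is a two-coordinate (Robin Hood) transfer, the permutation $\Pi_n$ is the transposition interchanging a pair of indices $j$ and $k$, so $T$ acts as the identity on every coordinate other than $j$ and $k$ and reduces to the $2\times2$ block $\bigl(\begin{smallmatrix}\omega & 1-\omega\\ 1-\omega & \omega\end{smallmatrix}\bigr)$ on those two. Consequently, forming $B=AT$ leaves every column $a_{\cdot l}$ with $l\neq j,k$ unchanged, while
$$b_{\cdot j}=\omega\,a_{\cdot j}+(1-\omega)\,a_{\cdot k},\qquad b_{\cdot k}=(1-\omega)\,a_{\cdot j}+\omega\,a_{\cdot k}.$$

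First I would write out $G_n(A)-G_n(B)$ using the column-separable form $G_n(A)=\sum_{i=1}^n H(a_{1i},a_{2i})$. Because $H(b_{1l},b_{2l})=H(a_{1l},a_{2l})$ for every untouched column $l\neq j,k$, all of those terms cancel, and we are left with
$$G_n(A)-G_n(B)=\bigl[H(a_{1j},a_{2j})+H(a_{1k},a_{2k})\bigr]-\bigl[H(b_{1j},b_{2j})+H(b_{1k},b_{2k})\bigr].$$
Next I would introduce the $2\times2$ submatrix $A'$ consisting of columns $j$ and $k$ of $A$, together with its image $B'=A'T'$ under the $2\times2$ T-transform $T'$. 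By construction the right-hand side above is exactly $G_2(A')-G_2(B')$, so the whole problem has been localized to the two-column setting.

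To finish, I would invoke that $A\in\mathcal K_n$ (respectively $\mathcal L_n$) forces the extracted pair of columns $A'$ to lie in $\mathcal K_2$ (respectively $\mathcal L_2$), so that $A'$ is admissible for Lemma~\ref{lem-1}. Since $B'=A'T'$, Definition~\ref{def-3}(ii) gives $A'\gg B'$, and the hypothesis that $G_2$ satisfies Lemma~\ref{lem-1} yields $G_2(A')\ge G_2(B')$. Therefore $G_n(A)-G_n(B)=G_2(A')-G_2(B')\ge0$, which is the assertion; the $\mathcal L_n$ statement follows by running the identical argument with every inequality reversed.

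The main obstacle I anticipate is the bookkeeping for the constraint sets rather than any analytic difficulty. One must confirm that membership $A\in\mathcal K_n$ descends to $A'\in\mathcal K_2$ for the specific pair of columns touched by $\Pi_n$, and that the two hypotheses of Lemma~\ref{lem-1} — the permutation-invariance $G_2(A')=G_2(A'\Pi)$ and the sign condition on $\sum_i (a_{ik}-a_{ij})(G_{ik}-G_{ij})$ — are genuinely inherited by the restricted two-column function. Everything else rests on the elementary observation that a T-transform is \emph{local} to two coordinates, which is precisely what lets the additive structure of $G_n$ reduce the comparison to the already-solved case $n=2$.
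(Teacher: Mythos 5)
Your proof is correct, and it is essentially the standard argument for this lemma: since the permutation in a T-transform is a transposition, $B=AT$ differs from $A$ only in the two columns $j,k$, so the additive form of $G_n$ collapses $G_n(A)-G_n(B)$ to $G_2(A')-G_2(B')$ with $A'\in\mathcal K_2$ (resp.\ $\mathcal L_2$) and $A'\gg B'$, at which point Lemma \ref{lem-1} applies. Note that the paper itself offers no proof — the lemma is quoted from \cite{barmalzan2022orderings} — and your reduction is exactly the one used in that cited source, with the constraint-set bookkeeping you flag going through immediately because membership in $\mathcal K_n$ ($\mathcal L_n$) is a pairwise-column condition and hence restricts to any two columns.
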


\begin{lemma}\label{lem-3}
	$\left[\text{\cite{saunders1978quantiles}}\right]$ 
	Let $\left\{F_{a},a\in\Bbb R_{+}\right\}$ be a class of distribution function, such that $F_{a}$ is supported on some interval $(c,d)\subseteq (0,\infty)$ and has a density $f_{a}$ which does not vanish on any subinterval of $(c,d)$. Then $F_{a}\ge_{\star} F_{b}$ for $a\ge b$, if and only if $\frac{\partial F_{a}(x)/\partial a}{xf_{a}(x)}$ is decreasing in $x$.
\end{lemma}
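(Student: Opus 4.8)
The plan is to recast the star order $F_a \ge_\star F_b$ in terms of the transformation $\phi_{a,b}(x) := F_a^{-1}(F_b(x))$, whose defining property (Definition \ref{def-1}(iii)) is precisely that $\phi_{a,b}(x)/x$ is increasing in $x \in (c,d)$. The standing hypotheses are exactly what make this workable: since $f_a$ does not vanish on any subinterval, each $F_a$ is strictly increasing and continuously invertible, so $\phi_{a,b}$ is a well-defined, differentiable, increasing bijection; and since the support satisfies $(c,d)\subseteq(0,\infty)$, the variable $x$ stays bounded away from $0$ so that dividing by $x$ is legitimate. The whole argument then reduces to understanding when the one-parameter family of maps $\phi_{a,b}$ is ``star-shaped'' for every pair $a \ge b$.

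First I would show that it suffices to check the order for infinitesimally close parameters. The key structural fact is the composition identity $\phi_{a,c} = \phi_{a,b}\circ\phi_{b,c}$, together with the observation that star-shapedness is preserved under composition of increasing maps: writing $\frac{\psi(\chi(x))}{x} = \frac{\psi(\chi(x))}{\chi(x)}\cdot\frac{\chi(x)}{x}$ exhibits the ratio as a product of two increasing positive factors whenever $\psi(x)/x$ and $\chi(x)/x$ are increasing. Hence the star order is closed under composition along the family, and the global statement ``$F_a \ge_\star F_b$ for all $a \ge b$'' is equivalent to the local statement that $F_{a+\mathrm{d}a} \ge_\star F_a$ for every $a$ and every $\mathrm{d}a > 0$.

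Next I would make the local statement explicit by implicit differentiation. Setting $y = \phi_{a+\mathrm{d}a,\,a}(x)$, so that $F_{a+\mathrm{d}a}(y) = F_a(x)$, and expanding to first order in $\mathrm{d}a$ gives $f_a(x)\,(y-x) + \mathrm{d}a\,\partial_a F_a(x) = o(\mathrm{d}a)$, whence $y - x = -\,\mathrm{d}a\,\dfrac{\partial F_a(x)/\partial a}{f_a(x)} + o(\mathrm{d}a)$. Therefore $\dfrac{\phi_{a+\mathrm{d}a,\,a}(x)}{x} = 1 - \mathrm{d}a\,\dfrac{\partial F_a(x)/\partial a}{x\,f_a(x)} + o(\mathrm{d}a)$, and this ratio is increasing in $x$ (for $\mathrm{d}a > 0$) exactly when $\dfrac{\partial F_a(x)/\partial a}{x\,f_a(x)}$ is decreasing in $x$. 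Combining this with the reduction of the previous step delivers the asserted equivalence in both directions.

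The step I expect to be the main obstacle is making the reduction to the infinitesimal condition fully rigorous rather than merely formal. One must verify carefully that star-shapedness is genuinely preserved under the compositions involved (so the global order really does follow from the local one), and control the remainder terms in the first-order expansion uniformly enough to integrate the differential inequality back into honest monotonicity of $\phi_{a,b}(x)/x$ across a finite parameter gap. The differentiability of $\phi_{a,b}$ and of the map $a \mapsto F_a(x)$ used throughout must be traced back to the non-vanishing-density hypothesis. By contrast, the computational heart of the argument, namely the implicit differentiation that identifies the quantity $\partial_a F_a(x)/(x f_a(x))$, is entirely routine; the subtlety lies in the passage between one-parameter monotonicity in the star order and the sign of this pointwise derivative.
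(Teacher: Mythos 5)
The paper itself gives no proof of this lemma: it is quoted from \cite{saunders1978quantiles}, so your proposal can only be judged on its own merits (and against the standard argument in that reference). Your computational core is right, and your necessity direction is essentially sound: if $\phi_{a+h,a}(x)/x$ is increasing in $x$ for every $h>0$, then so is $h^{-1}\log\left(\phi_{a+h,a}(x)/x\right)$, and letting $h\to 0^{+}$ your first-order expansion identifies the pointwise limit as $-\,\partial_a F_a(x)/(x f_a(x))$, which is then increasing in $x$ as a pointwise limit of increasing functions. The genuine gap is in the sufficiency direction, and it sits exactly where you suspect. Your Step 2 ``reduction'' is, as written, vacuous: the statement that $F_{a+\mathrm{d}a}\ge_\star F_a$ for every $a$ and every $\mathrm{d}a>0$ \emph{is} the global statement (relabel $a+\mathrm{d}a$ as $a$ and $a$ as $b$), so nothing has been reduced. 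The composition lemma lets you chain finitely many maps each of which is \emph{exactly} star-shaped, but your expansion only shows each short-gap map is star-shaped \emph{to first order in} $\mathrm{d}a$; you cannot compose infinitely many such maps without uniform control of the $o(\mathrm{d}a)$ errors, which you flag as an obstacle but never supply.

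The clean way to close this gap is to integrate over the parameter instead of chaining infinitesimals. Fix $p=F_b(x)$; since $f_t>0$ on $(c,d)$, the implicit function theorem applied to $F_t\!\left(F_t^{-1}(p)\right)=p$ gives
\[
\frac{\partial}{\partial t}\log F_t^{-1}(p)
=-\left.\frac{\partial F_t(z)/\partial t}{z\,f_t(z)}\right|_{z=F_t^{-1}(p)},
\qquad\text{hence}\qquad
\log\frac{\phi_{a,b}(x)}{x}
=-\int_b^a \left.\frac{\partial F_t(z)/\partial t}{z\,f_t(z)}\right|_{z=F_t^{-1}(F_b(x))}\,\mathrm{d}t .
\]
If the ratio $\partial F_t(z)/\partial t\,\big/\,\bigl(z f_t(z)\bigr)$ is decreasing in $z$ for each $t$, then for $x_1<x_2$ we have $F_t^{-1}(F_b(x_1))<F_t^{-1}(F_b(x_2))$ for every $t$, so the integrand evaluated at $x_2$ dominates that at $x_1$ pointwise in $t$; integrating over $t\in[b,a]$ shows $\log\left(\phi_{a,b}(x)/x\right)$ is increasing in $x$, i.e.\ $F_a\ge_\star F_b$, with no remainder estimates needed. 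This fundamental-theorem-of-calculus argument is, in effect, the Saunders--Moran proof; your Step 3 computation is precisely the displayed derivative identity, so the fix amounts to replacing your Step 2 with this integral representation.
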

 
   $\\$
   Also, we define two required spaces as the below  \\  %Kn 和Ln
     
   $\mathcal{K}_{n}=
   \left \{ (a,b)=\begin{pmatrix}
   	a_{1}&\cdots&a_{n} \\
   	b_{1}&\cdots&b_{n}
   \end{pmatrix}:a_{i},b_{j}>0 \quad and \quad (a_{i}-a_{j})(b_{i}-b_{j})\le 0,i,j=1,\cdots,n \right \}  $,\\
   
   $\mathcal{L}_{n}=
   \left \{ (a,b)=\begin{pmatrix}
   	a_{1}&\cdots&a_{n} \\
   	b_{1}&\cdots&b_{n}
   \end{pmatrix}:a_{i},b_{j}>0 \quad and \quad (a_{i}-a_{j})(b_{i}-b_{j})\ge 0,i,j=1,\cdots,n \right \}$.\\

\section{Usual stochastic orders of the finite mixtures }\label{sec-3}
      In this section, we discuss stochastic comparisons of the finite mixture with modified proportional hazard rates model in the sense of the usual stochastic order. Section \ref{sec-3} is divided into two parts, with the first part dealing with stochastic comparison of MPHR mixture models when the mixture proportions $p_{i}$ and tilt parameters $\alpha_{i}$ are different, while the second part deals with MPHR mixture models when the mixture proportions $p_{i}$ and modified proportional hazard rates $\lambda_{i}$ are different. Bring (\ref{1}) into the finite mixture models, the survival function of the finite mixture with MPHR can be expressed as
      $$\bar{F}_{C_{(p,\alpha)}}(x)
      =\sum_{i=1}^{n}p_{i}\frac{\alpha_{i}\bar{F}^{\lambda}(x)}
      {1-\bar{\alpha}_{i}\bar{F}^{\lambda}(x)
      	}
      =\sum_{i=1}^{n}p_{i}\bar{F}_{\alpha_{i}},
      \quad
      0\le \alpha \le 1,$$

      $$\bar{F}_{C_{(p,\lambda)}}(x)
      =\sum_{i=1}^{n}p_{i}\frac{\alpha\bar{F}^{\lambda_{i}}(x)}
      {1-\bar{\alpha}\bar{F}^{\lambda_{i}}(x)}
      =\sum_{i=1}^{n}p_{i}\bar{F}_{\lambda_{i}},
      \quad0\le \alpha \le 1.$$
    
\subsection{Usual stochastic orders of mixture Proportions and Tilt Parameters }\label{sec-3-1} 
    The following Theorems and Corollaries show that two random variables are compared by chain optimization under usual stochastic order when the mixture proportions $p_{i}$ and tilt parameters $\alpha_{i}$ are different.
    
    Theorem \ref{th-1} discusses stochastic comparison of usual stochastic  order in the presence of chain majorization  in two $2\times 2$-order matrices.
\begin{theorem}\label{th-1}  %定理1
     Let 
     $\bar{F}_{V_{2}{(p,\alpha)}}(x)
     =\sum_{i=1}^{2}p_{i}
     \frac{\alpha_{i}{\bar{F}^{\lambda }(x) } }
     {1-\bar{\alpha_{i} }\bar{F}^{\lambda}(x)}$ 
     and $\bar{F}_{W_{2}{(q,\beta)}}(x)
     =\sum_{i=1}^{2} q_{i} \frac{\beta_{i}{\bar{F}^{\lambda}(x)}}
     {1-\bar{\beta_{i}}\bar{F}^{\lambda}(x)}$
     be the survival functions of the finite mixtures with modified proportional hazard rates model corresponding to $V_{2}(p,\alpha)$ and $W_{2}(q,\beta)$, respectively.
     
   	\begin{enumerate} [\rm (i)]
    \item  If
    	$\begin{pmatrix}
    		p_{1}& p_{2}\\
    		\alpha_{1}&\alpha_{2}
    	\end{pmatrix}
    	\gg 
    	\begin{pmatrix}
    		q_{1}&q_{2}\\
    		\beta_{1}&\beta_{2}
    	\end{pmatrix}$
    	and $(p,\alpha)\in \ \mathcal {K}_{2}$,
    	then $V_{2}(p,\alpha)\le_{st}W_{2}(q,\beta).$\\
   	 \item If
        $\begin{pmatrix}
        	p_{1}& p_{2}\\
        	\alpha_{1}&\alpha_{2}
        \end{pmatrix}
        \gg 
        \begin{pmatrix}
           q_{1}&q_{2}\\
           \beta_{1}&\beta_{2}
        \end{pmatrix}$,
        $\alpha_{2}p_{1}\bar{F}_{\alpha_{1}}(x)
        \ge
        \alpha_{1}p_{2}\bar{F}_{\alpha_{2}}(x)
        $,
        and $(p,\alpha)\in \ \mathcal{L}_{2}$,
        
        then $V_{2}(p,\alpha)\ge _{st}W_{2}(q,\beta)$.
   \end{enumerate}	
   
\end{theorem}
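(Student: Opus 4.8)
The plan is to fix the argument $x$ and regard each survival function as a function of the parameter matrix, so that the usual stochastic order becomes a pointwise comparison to which Lemma \ref{lem-1} applies directly (no appeal to Lemma \ref{lem-2} is needed, since the matrices are already $2\times 2$). Writing $u=\bar{F}^{\lambda}(x)\in[0,1]$ and $g(\alpha)=\frac{\alpha u}{1-\bar{\alpha} u}$, we have $\bar{F}_{\alpha_i}(x)=g(\alpha_i)$, and for $A=\begin{pmatrix}p_1&p_2\\\alpha_1&\alpha_2\end{pmatrix}$ we set $G(A):=\bar{F}_{V_2(p,\alpha)}(x)=p_1g(\alpha_1)+p_2g(\alpha_2)$, while $G(B)=\bar{F}_{W_2(q,\beta)}(x)$. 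Since $G$ is a symmetric sum over the two columns, condition (i) of Lemma \ref{lem-1} (invariance under column permutations) is immediate, so everything reduces to checking the sign of the derivative expression in condition (ii), uniformly in $u\in[0,1]$; only the pairing $(j,k)=(1,2)$ is nontrivial, as $j=k$ gives $0$ and $(2,1)$ repeats $(1,2)$.

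First I would record the two elementary facts that drive the argument: $g$ is increasing in $\alpha$, and
\begin{equation*}
g'(\alpha)=\frac{u(1-u)}{(1-\bar{\alpha} u)^{2}}\ge 0
\end{equation*}
is decreasing in $\alpha$. Writing $D_i=1-\bar{\alpha}_i u$ and assuming $\alpha_2\ge\alpha_1$ (legitimate by the permutation invariance just noted), a direct computation gives the clean identity $g(\alpha_2)-g(\alpha_1)=u(1-u)(\alpha_2-\alpha_1)/(D_1D_2)$. Substituting this together with the formula for $g'$ into the condition (ii) sum
\begin{equation*}
S=(p_2-p_1)\bigl(g(\alpha_2)-g(\alpha_1)\bigr)+(\alpha_2-\alpha_1)\bigl(p_2g'(\alpha_2)-p_1g'(\alpha_1)\bigr)
\end{equation*}
and clearing the common denominator $D_1^{2}D_2^{2}$, the two terms collapse into the factorization
\begin{equation*}
S=\frac{u(1-u)(\alpha_2-\alpha_1)(D_1+D_2)}{D_1^{2}D_2^{2}}\,\bigl(p_2D_1-p_1D_2\bigr).
\end{equation*}
Thus the sign of $S$ is governed entirely by the single factor $(\alpha_2-\alpha_1)(p_2D_1-p_1D_2)$, and the remaining work is bookkeeping on this factor.

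For part (i), $(p,\alpha)\in\mathcal{K}_2$ forces the rows to be oppositely ordered, so $\alpha_2\ge\alpha_1$ entails $p_2\le p_1$; since $D_i=1-\bar{\alpha}_i u$ is increasing in $\alpha_i$ we also have $0<D_1\le D_2$, whence $p_2D_1\le p_1D_2$. Together with $\alpha_2-\alpha_1\ge 0$ this yields $S\le 0$ for every $u\in[0,1]$, so condition (ii) of Lemma \ref{lem-1} holds in the required direction and the lemma delivers $G(A)\le G(B)$ for all $x$, that is $V_2(p,\alpha)\le_{\rm st}W_2(q,\beta)$.

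Part (ii) is where the real work lies, and is the step I expect to be the main obstacle. Now $(p,\alpha)\in\mathcal{L}_2$, so $\alpha_2\ge\alpha_1$ comes with $p_2\ge p_1$; the inequalities $p_2\ge p_1$ and $D_1\le D_2$ then pull the decisive factor $p_2D_1-p_1D_2$ in opposite directions, and its sign is genuinely indeterminate from the majorization data alone. This is exactly the gap the extra hypothesis is meant to close: cancelling the positive factor $\alpha_1\alpha_2u$ shows that the comparison of $\alpha_2p_1\bar{F}_{\alpha_1}(x)$ with $\alpha_1p_2\bar{F}_{\alpha_2}(x)$ is equivalent to a comparison of $p_1/D_1$ with $p_2/D_2$, i.e.\ to a definite sign for $p_2D_1-p_1D_2$, uniformly in $u$ (the endpoint $u=0$ being trivial since both sides vanish). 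Inserting this sign into the factorization fixes the sign of $S$, and Lemma \ref{lem-1} converts it into a stochastic order between $V_2(p,\alpha)$ and $W_2(q,\beta)$. The delicate point, which must be checked carefully, is that the orientation of the extra inequality is matched to the direction of the asserted order: because $S$ and $p_2D_1-p_1D_2$ carry the same sign through the factorization, one has to confirm that the hypothesis forces $p_2D_1-p_1D_2\ge 0$ (equivalently $p_1/D_1\le p_2/D_2$), and to verify this holds for every $u\in[0,1]$ rather than only in limiting regimes — this uniform sign control is the crux of the proof.
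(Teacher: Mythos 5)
Your part (i) is correct and is essentially the paper's own argument: fix $x$, verify conditions (i)--(ii) of Lemma \ref{lem-1}, and determine the sign of the condition-(ii) expression. Your factorization $S\overset{\rm sgn}{=}u(1-u)(\alpha_2-\alpha_1)(D_1+D_2)(p_2D_1-p_1D_2)$ is exactly the paper's final sign-equivalent form of $H(p,\alpha)$, namely $\left(\bar F^{\lambda}(x)-\bar F^{2\lambda}(x)\right)(\alpha_1-\alpha_2)\left(2-\bar\alpha_1\bar F^{\lambda}(x)-\bar\alpha_2\bar F^{\lambda}(x)\right)\left[p_1\left(1-\bar\alpha_2\bar F^{\lambda}(x)\right)-p_2\left(1-\bar\alpha_1\bar F^{\lambda}(x)\right)\right]$, just reached with tidier algebra.

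Part (ii), however, has a genuine gap, located exactly at the point you flagged as the crux. Your reduction ``WLOG $\alpha_2\ge\alpha_1$'' is not legitimate in part (ii): the extra hypothesis $\alpha_2p_1\bar F_{\alpha_1}(x)\ge\alpha_1p_2\bar F_{\alpha_2}(x)$ carries specific indices and is \emph{not} invariant under relabeling the two columns, so you cannot normalize the $\alpha$'s without simultaneously swapping the hypothesis. Concretely, cancelling $\alpha_1\alpha_2u>0$, the hypothesis as literally stated is equivalent to $p_1/D_1\ge p_2/D_2$, i.e.\ $p_2D_1-p_1D_2\le 0$ --- the opposite of what you assert at the end (your ``equivalently $p_1/D_1\le p_2/D_2$'' has the inequality backwards). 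Feeding the correct sign into your factorization under your normalization $\alpha_2\ge\alpha_1$ gives $S\le 0$, hence $V_2(p,\alpha)\le_{st}W_2(q,\beta)$, the reverse of the claimed conclusion; so the confirmation you deferred would in fact fail. The repair is the paper's route: keep the labels as given and work in the branch of $\mathcal{L}_2$ with $p_1\ge p_2$ and $\alpha_1\ge\alpha_2$, the branch in which the indexed hypothesis is aligned with the asserted order. There $D_1\ge D_2$, the hypothesis gives $p_1D_2-p_2D_1\ge 0$, and since $S\overset{\rm sgn}{=}(\alpha_1-\alpha_2)(p_1D_2-p_2D_1)$ one gets $S\ge 0$, so Lemma \ref{lem-1} yields $V_2(p,\alpha)\ge_{st}W_2(q,\beta)$. (In the other branch $p_1\le p_2$, $\alpha_1\le\alpha_2$, the hypothesis would have to be read with the indices interchanged; the paper glosses over this with ``the other case is quite similar,'' and that looseness in the statement is precisely what your WLOG step tripped on.)
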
  

\begin{proof} %定理1证明
	 The survival function of $V_{2}(p,\alpha)$ can be expressed as
		%生存函数的公式
	 	$$\bar{F}_{V_{2}{(p,\alpha)}}(x)
	 	=\sum_{i=1}^{2} p_{i} 
	 	\Frac{\alpha _{i} {\bar{F}^{\lambda }(x) } }
	 	{1-\bar{\alpha_{i} }\
	 	\bar{F}^{\lambda}(x)}$$.

	 To establish the desired result, we have to check conditions (i) and (ii) of Lemma \ref{lem-1}. Clearly, $V_{2}(p,\alpha)$ is permutation invariant on $\mathcal{L}_{2}$ and $\mathcal{K}_{2}$, for fixed $x>0$, which confirms Condition (i). Now, for fixed $x>0$ and $i\neq j$, consider the function H
      %H(p,a)基础公式
	 \begin{equation}
	 	H(p,\alpha)
	 	=(p_{1}-p_{2})
	 	\left(\frac{\partial \bar{F}_{V_{2}(p,\alpha)}(x)}
	 	{\partial p_{1}}
	 	-\frac{\partial \bar{F}_{V_{2}(p,\alpha)}(x)}
	 	{\partial p_{2}}\right)
	 	+(\alpha_{1}-\alpha_{2})
	 	\left(\frac{\partial\bar{F}_{V_{2}(p,\alpha)}(x)}
	 	{\partial\alpha_{1}}
	 	-\frac{\partial \bar{F}_{V_{2}(p,\alpha)}(x)}
	 	{\partial\alpha_{2}}\right).
	 	  \label{2}
	 \end{equation}
	 
    The partial derivatives of $\bar{F}_{V_{2}(p,\alpha)}(x)$ with respect to $p_{i}$ and $\alpha_{i}$ are\\
	\begin{center} %对 pi ai 求导
	   $\Frac{\partial \bar{F}_{V_{2}(p,\alpha)}(x)}{\partial p_{i}}
	   =\Frac{\alpha_{i}\bar{F}^{\lambda}(x)}
	    {1-\bar{\alpha}_{i}\bar{F}^{\lambda}(x)}$
	   and
	   $\Frac{\partial \bar{F}_{V_{2}(p,\alpha)}(x)}{\partial \alpha_{i}}
	   =\Frac{p_{i}(\bar{F}^{\lambda}(x)-\bar{F}^{2\lambda}(x))}
	    {\left(1-\bar{\alpha}_{i}\bar{F}^{\lambda}(x)\right)^2}$,
	\end{center}
	respectively. Now, upon substituting these expressions in (\ref{2}), we obtain\\\\
	
   %关于p1-p2
   ${\Large
   \Frac{\partial \bar{F}_{V_{2}(p,\alpha)}(x)}
   {\partial p_{1}}
   -\Frac{\partial \bar{F}_{V_{2}(p,\alpha)}(x)}
   {\partial p_{2}}
   =
   \Frac{\left(\alpha_{1}-\alpha_{2}\right)
   	\left(\bar{F}^{\lambda}(x)-\bar{F}^{2\lambda}(x)\right)}
   {\left(1-\bar{\alpha}_{1}\bar{F}^{\lambda}(x)\right)
   	\left(1-\bar{\alpha}_{2}\bar{F}^{\lambda}(x)\right)}
   }$,\\  
	and  \\  
	
    %关于a1-a2
   ${\Large
   	\Frac{\partial \bar{F}_{V_{2}(p,\alpha)}(x)}
   {\partial \alpha_{1}}
   -\Frac{\partial \bar{F}_{V_{2}(p,\alpha)}(x)}
   {\partial\alpha_{2}}
   =
   \Frac{\left(\bar{F}^{\lambda}(x)-\bar{F}^{2\lambda}(x)\right)
   \left[p_{1}
   \left(1-\bar{\alpha}_{2}\bar{F}^{\lambda}(x)\right)^{2}
	  	-p_{2}
   \left(1-\bar{\alpha}_{1}\bar{F}^{\lambda}(x)\right)^{2}\right]}
   {\left(1-\bar{\alpha}_{1}\bar{F}^{\lambda}(x)\right)^{2}
   \left(1-\bar{\alpha}_{2}\bar{F}^{\lambda}(x)\right)^{2}}
   }$,\\
	then  
	\begin{align*}  %求H(p,a)
	  H(p,\alpha)
	  &=\frac{(p_{1}-p_{2})(\alpha_{1}-\alpha_{2})
	   \left(\bar{F}^{\lambda}(x)-\bar{F}^{2\lambda}(x)\right)}
	   {\left(1-\bar{\alpha}_{1}\bar{F}^{\lambda}(x)\right)
	   \left(1-\bar{\alpha}_{2}\bar{F}^{\lambda}(x)\right)}\\
	  &\quad+\frac{(\alpha_{1}-\alpha_{2})\left(\bar{F}^{\lambda}(x)-\bar{F}^{2\lambda}(x)\right)
	   \left[p_{1}
	   \left(1-\bar{\alpha}_{2}\bar{F}^{\lambda}(x)\right)^{2}
	  -p_{2}
	   \left(1-\bar{\alpha}_{1}\bar{F}^{\lambda}(x)\right)^{2}\right]}
	   {\left(1-\bar{\alpha}_{1}\bar{F}^{\lambda}(x)\right)^{2}
	   	\left(1-\bar{\alpha}_{2}\bar{F}^{\lambda}(x)\right)^{2}}\\
	  &=\frac{\left(1-\bar{\alpha}_{1}\bar{F}^{\lambda}(x)\right)
	 	\left(1-\bar{\alpha}_{2}\bar{F}^{\lambda}(x)\right)
	 	(p_{1}-p_{2})
	 	(\alpha_{1}-\alpha_{2})
	 	\left(\bar{F}^{\lambda}(x)-\bar{F}^{2\lambda}(x)\right)}
	   {\left(1-\bar{\alpha}_{1}\bar{F}^{\lambda}(x)\right)^{2}
	   	\left(1-\bar{\alpha}_{2}\bar{F}^{\lambda}(x)\right)^{2}}\\
	  &\quad+\frac{(\alpha_{1}-\alpha_{2})
	 	\left(\bar{F}^{\lambda}(x)-\bar{F}^{2\lambda}(x)\right)
	 	\left[p_{1}
	 	\left(1-\bar{\alpha}_{2}\bar{F}^{\lambda}(x)\right)^{2}
	  -p_{2}
	   \left(1-\bar{\alpha}_{1}\bar{F}^{\lambda}(x)\right)^{2}\right]}
	   {\left(1-\bar{\alpha}_{1}\bar{F}^{\lambda}(x)\right)^{2}
	   	\left(1-\bar{\alpha}_{2}\bar{F}^{\lambda}(x)\right)^{2}}\\
	  &\overset{\rm{sgn}}=
	   \left(1-\bar{\alpha}_{1}\bar{F}^{\lambda}(x)\right)
	   \left(1-\bar{\alpha}_{2}\bar{F}^{\lambda}(x)\right)
	   (p_{1}-p_{2})
	   (\alpha_{1}-\alpha_{2})
	   \left(\bar{F}^{\lambda}(x)-\bar{F}^{2\lambda}(x)\right)\\
	  &\quad+(\alpha_{1}-\alpha_{2})
	   \left(\bar{F}^{\lambda}(x)-\bar{F}^{2\lambda}(x)\right)
	   \left[p_{1}
	   \left(1-\bar{\alpha}_{2}\bar{F}^{\lambda}(x)\right)^{2}
	  -p_{2}
	   \left(1-\bar{\alpha}_{1}\bar{F}^{\lambda}(x)\right)^{2}\right]\\
	  &\overset{\rm{sgn}}=
	   \left(\bar{F}^{\lambda}(x)-\bar{F}^{2\lambda}(x)\right)
	   (\alpha_{1}-\alpha_{2})
	   \left(2-\bar{\alpha}_{1}\bar{F}^{\lambda}(x)-\bar{\alpha}_{2}\bar{F}^{\lambda}(x)\right)\\
	  &\quad\times
	   \left[p_{1}
	   \left(1-\bar{\alpha}_{2}\bar{F}^{\lambda}(x)\right)
	  -p_{2}
	   \left(1-\bar{\alpha}_{1}\bar{F}^{\lambda}(x)\right)\right].
	\end{align*}
	
  The assumption that $(p,\alpha)\in\mathcal{K}_{2}$ implies that
    \begin{center}
    	$(p_{1}-p_{2})(\alpha_{1}-\alpha_{2})\le0,$
    \end{center}
     which means that  $p_{1}\ge p_{2}$ and $\alpha_{1}\le \alpha_{2}$ (or $p_{1}\le p_{2}$ and $\alpha_{1}\ge \alpha_{2}$). 
  We present the proof only for tha case when $p_{1}\ge p_{2}$ and $\alpha_{1}\le \alpha_{2}$, since the proof for the other case is quite similar.
  
  Because $\bar{F}(x)$ is a decreasing function with respect to $x$, then
   \begin{center}
   	  $\bar{F}^{\lambda}(x)\ge \bar{F}^{2\lambda}(x).$
   \end{center} 
   
   As $\alpha_{1}\le \alpha_{2}$ and $p_{1}\ge p_{2}$, we readily observe
    \begin{center}
    	 $1-\bar{\alpha}_{2}\bar{F}^{\lambda}(x)\ge 1-\bar{\alpha}_{1}\bar{F}^{\lambda}(x),$
    \end{center}
    and
    \begin{center}
       $p_{1}\left(1-\bar{\alpha}_{2}\bar{F}^{\lambda}(x)\right)
       -p_{2}\left(1-\bar{\alpha}_{1}\bar{F}^{\lambda}(x)\right)\ge0.$
    \end{center}
    
  We obtain 
  \begin{center}
  	$H(p,\alpha)\le 0.$
  \end{center}
  
     Condition (ii) of Lemma \ref{lem-1} is satisfied, implies 
     \begin{center}
     	$V_{2}(p,\alpha)\le_{st} W_{2}(q,\beta)$. 
     \end{center}

  The assumption that $(p,\alpha)\in\mathcal{L}_{2}$ implies that
  \begin{center}
  	$(p_{1}-p_{2})(\alpha_{1}-\alpha_{2})\ge0,$
  \end{center}
   which means that
    $p_{1}\ge p_{2}$ and $\alpha_{1}\ge \alpha_{2}$ (or $p_{1}\le p_{2}$ and $\alpha_{1}\le \alpha_{2}$).
  We present the proof only for tha case when $p_{1}\ge p_{2}$ and $\alpha_{1}\ge \alpha_{2}$. 
  
  Because $\alpha_{1}\ge \alpha_{2}$, then
  \begin{center}
  	$1-\bar{\alpha}_{1}\bar{F}^{\lambda}(x) \ge 
  	1-\bar{\alpha}_{2}\bar{F}^{\lambda}(x).$
  \end{center} 
  
  If
  \begin{center}
  	 $\alpha_{2}p_{1}\bar{F}_{\alpha_{1}}(x)
  	 \ge
  	 \alpha_{1}p_{2}\bar{F}_{\alpha_{2}}(x),$
  \end{center} 
  we have
   \begin{center}
   	$H(p,\alpha)\ge0.$
   \end{center}
   
   Condition (ii) of Lemma \ref{lem-1} is satisfied, implies that 
   \begin{center}
   	$V_{2}(p,\alpha)\ge_{st} W_{2}(q,\beta).$
   \end{center} 
   
   The proof of Theorem \ref{th-1} is completed.
\end{proof}
   
   Example \ref{e-1} provides an illustration of  Theorem \ref{th-1}.
\begin{example}\label{e-1}%th1 F-1
	Consider the finite mixture $V_{2}(p,\alpha)$ and $W_{2}(q,\beta)$, where
	$(p,\alpha)\in\mathcal{K}_{2}$,
    $\lambda=0.1$,
	 $(p_{1},p_{2})=(0.6,0.4)$, $(\alpha_{1},\alpha_{2})=(0.3,0.4)$, $(q_{1},q_{2})=(0.48,0.52)$, and $(\beta_{1},\beta_{2})=(0.36,0.34)$.
    We can easily observe that $(q,\beta)=(p,\alpha)T_{0.4}$, which implies $(p,\alpha)\gg(q,\beta)$.
    We take  $\bar{F}(x)=exp\left\{-ax \right \}$ with $a=0.2$. The survival functions of the two finite mixture models are plotted in Figure \ref{1}. It can be seen in Figure \ref{F-1} that $V_{2}(p,\alpha)\le_{st}W_{2}(q,\beta)$.
    Therefore, the validity of Theorem \ref{th-1} is verified.
	
    \begin{figure}[htbp]\label{F-1}
    	\centering
    	\includegraphics[width=0.8\linewidth]{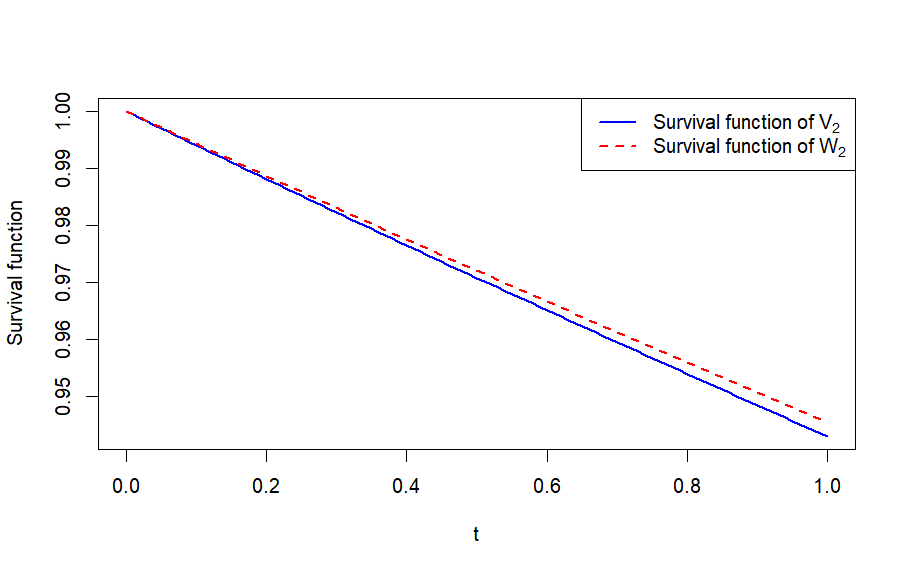}
    	\caption{Plots of $\bar{F}_{V_{2}(p,\alpha)}(x)$ and $\bar{F}_{W_{2}(q,\beta)}(x)$ for $x=t/(1-t)$, $t\in[0,1]$.}
    	\label{fig:1}
    \end{figure}

\end{example}

 Theorem \ref{th-2} discusses stochastic comparison of usual stochastic  order in the presence of chain majorization  in two $n\times n$-order matrices.
\begin{theorem}\label{th-2}  %定理2
	Let $\bar{F}_{V_{n}{(p,\alpha)}}(x)=\sum_{i=1}^{n} p_{i} \frac{\alpha _{i} {\bar{F}^{\lambda }(x) } }{1-\bar{\alpha_{i} }\bar{F}^{\lambda}(x)}$
	and $\bar{F}_{W_{n}{(q,\beta)}}(x)=\sum_{i=1}^{n} q_{i} \frac{\beta_{i}{\bar{F}^{\lambda}(x)}}{1-\bar{\beta_{i}}\bar{F}^{\lambda}(x)}$
	be the survival functions of the finite mixtures with modified proportional hazard rates model corresponding to $V_{n}(p,\alpha)$ and $W_{n}(q,\beta)$, respectively.
	
   	\begin{enumerate} [\rm (i)]
	\item  If
	  $\begin{pmatrix}
	  	q_{1}&\cdots&q_{n}\\
		\beta_{1}&\cdots&\beta_{n}
	   \end{pmatrix}
	   =
	   \begin{pmatrix}
	   	p_{1}&\cdots& p_{n}\\
	   	\alpha_{1}&\cdots&\alpha_{n}
	   \end{pmatrix}T$
	and $(p,\alpha)\in \ \mathcal{K}_{n}$,
	then $V_{n}(p,\alpha)\le _{st}W_{n}(q,\beta)$. \\
	\item If
	 $\begin{pmatrix}
		q_{1}&\cdots&q_{n}\\
		\beta_{1}&\cdots&\beta_{n}
	  \end{pmatrix}
	  =
	  \begin{pmatrix}
	  	p_{1}&\cdots& p_{n}\\
	  	\alpha_{1}&\cdots&\alpha_{n}
	  \end{pmatrix}T $,
	 $\alpha_{j}p_{i}\bar{F}_{\alpha_{i}}(x)
	 \ge
	 \alpha_{i}p_{j}\bar{F}_{\alpha_{j}}(x)
	 $,
	and $(p,\alpha)\in \ \mathcal{L}_{n}$,
	then $V_{n}(p,\alpha)\ge_{st}W_{n}(q,\beta)$.
   \end{enumerate}	
\end{theorem}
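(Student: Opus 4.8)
The plan is to bootstrap from the $2\times2$ case already settled in Theorem \ref{th-1} by invoking Lemma \ref{lem-2}, exploiting the fact that the mixture survival function separates additively over the columns of the parameter matrix. First I would note that, for each fixed $x>0$, the survival function can be written as
$$\bar{F}_{V_{n}(p,\alpha)}(x)=\sum_{i=1}^{n}H(p_{i},\alpha_{i}),\qquad H(u,v)=\frac{uv\,\bar{F}^{\lambda}(x)}{1-(1-v)\bar{F}^{\lambda}(x)},$$
so that, writing $A$ for the $2\times n$ matrix with columns $(p_{i},\alpha_{i})$, we have exactly $G_{n}(A)=\sum_{i=1}^{n}H(a_{1i},a_{2i})$ in the notation of Lemma \ref{lem-2}, and likewise $G_{n}(B)=\bar{F}_{W_{n}(q,\beta)}(x)$ for $B=AT$.

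Next I would recall the verification already performed in the proof of Theorem \ref{th-1}: there the two-variable function $G_{2}$ built from precisely this $H$ was shown to be permutation invariant (condition (i) of Lemma \ref{lem-1}) and, on the domain $\mathcal{K}_{2}$ (respectively $\mathcal{L}_{2}$ under the extra sign hypothesis), to satisfy the derivative inequality (condition (ii) of Lemma \ref{lem-1}) with the appropriate sign. Hence $G_{2}$ satisfies Lemma \ref{lem-1}, which is exactly the hypothesis demanded by Lemma \ref{lem-2}.

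With this in place, for part (i) I would apply Lemma \ref{lem-2} with $A\in\mathcal{K}_{n}$ and $B=AT$ to obtain $\bar{F}_{V_{n}(p,\alpha)}(x)\le\bar{F}_{W_{n}(q,\beta)}(x)$ for every $x$, which is precisely $V_{n}(p,\alpha)\le_{st}W_{n}(q,\beta)$. Part (ii) is identical with $\mathcal{L}_{n}$ replacing $\mathcal{K}_{n}$: the added hypothesis $\alpha_{j}p_{i}\bar{F}_{\alpha_{i}}(x)\ge\alpha_{i}p_{j}\bar{F}_{\alpha_{j}}(x)$ for every pair $i\neq j$ is exactly the ingredient that forces the pairwise derivative condition to hold with the reversed sign, yielding $\bar{F}_{V_{n}(p,\alpha)}(x)\ge\bar{F}_{W_{n}(q,\beta)}(x)$ and hence $V_{n}(p,\alpha)\ge_{st}W_{n}(q,\beta)$.

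The only point requiring care, and the single genuine obstacle, is checking that membership $(p,\alpha)\in\mathcal{K}_{n}$ (respectively $\mathcal{L}_{n}$) supplies the pairwise sign condition $(p_{i}-p_{j})(\alpha_{i}-\alpha_{j})\le0$ (respectively $\ge0$) for \emph{every} index pair $i\neq j$, since this is what the Theorem \ref{th-1} computation consumes when read off the $(i,j)$ coordinates of $H$. This is immediate from the definitions of $\mathcal{K}_{n}$ and $\mathcal{L}_{n}$, so once the separable structure and the $2\times2$ verification are recorded, Lemma \ref{lem-2} performs all remaining work and no fresh calculation is needed.
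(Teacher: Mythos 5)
Your proposal is correct and follows essentially the same route as the paper: write $\bar{F}_{V_{n}(p,\alpha)}(x)$ as the column-separable sum $\sum_{i=1}^{n}H(p_{i},\alpha_{i})$, invoke the Theorem \ref{th-1} computation to confirm that $G_{2}$ satisfies both conditions of Lemma \ref{lem-1} on $\mathcal{K}_{2}$ (resp.\ $\mathcal{L}_{2}$ with the extra sign hypothesis), and then let Lemma \ref{lem-2} lift the conclusion to $n$ components. Your closing remark on checking the pairwise sign condition for every index pair is a point the paper's terse proof leaves implicit, but it is the same argument.
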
   

\begin{proof}  %定理2证明
	Setting $G_{n}(p,\alpha)=\bar{F}_{V_{n}(p,\alpha)}(x)$ and $H(p,\alpha)=p\alpha\bar{F}^{\lambda}(x)/1-\bar{\alpha}\bar{F}^{\lambda}(x)$, we have $G_{n}(p,\alpha)=\sum_{i=1}^{n}H(p_{i},\alpha_{i})
	=\sum_{i=1}^{n}{p_{i}\alpha_{i}\bar{F}^{\lambda}(x)}/{1-\bar{\alpha}_{i}\bar{F}^{\lambda}(x)}$. According to Theorem \ref{th-1}, $G_{2}(p,\alpha)$ satisfied Lemma \ref{lem-1}. The result required by Theorem \ref{th-2} follows from Lemma \ref{lem-2}. 
\end{proof}

Corollary \ref{c-1} discusses stochastic comparison of usual stochastic  order in the presence of chain majorization  in two $n\times n$-order matrices. In the process of chain optimization, the T-transform matrices has the same structure.
\begin{corollary}\label{c-1}  %引理1
	Let $\bar{F}_{V_{n}{(p,\alpha)}}(x)=\sum_{i=1}^{n} p_{i} \frac{\alpha _{i} {\bar{F}^{\lambda }(x) } }{1-\bar{\alpha_{i} }\bar{F}^{\lambda}(x)}$
	and $\bar{F}_{W_{n}{(q,\beta)}}(x)=\sum_{i=1}^{n} q_{i} \frac{\beta_{i}{\bar{F}^{\lambda}(x)}}{1-\bar{\beta_{i}}\bar{F}^{\lambda}(x)}$
	be the survival functions of the finite mixtures with modified proportional hazard rates model corresponding to $V_{n}(p,\alpha)$ and $W_{n}(q,\beta)$, if T-transform matrices $T_{1},...,T_{n}$ have the same structure, respectively.
	
	\begin{enumerate} [\rm (i)]
	\item  If
	 $\begin{pmatrix}
	 	q_{1}&\cdots&q_{n}\\
		\beta_{1}&\cdots&\beta_{n}
	  \end{pmatrix}
	  =
	  \begin{pmatrix}
	  	p_{1}&\cdots& p_{n}\\
	  	\alpha_{1}&\cdots&\alpha_{n}
	  \end{pmatrix}
	  T_{1}\cdots T_{k} $
	and $(p,\alpha)\in \ \mathcal{K}_{n}$,\\
	then $V_{n}(p,\alpha)\le _{st}W_{n}(q,\beta)$.\\ 
	\item If 
	$\begin{pmatrix}
		q_{1}&\cdots&q_{n}\\
		\beta_{1}&\cdots&\beta_{n}
	 \end{pmatrix}
	 =
	 \begin{pmatrix}
	 	p_{1}&\cdots& p_{n}\\
	 	\alpha_{1}&\cdots&\alpha_{n}
	 \end{pmatrix}
	 T_{1}\cdots T_{k}$, 
	 $\alpha_{j}p_{i}\bar{F}_{\alpha_{i}}(x)
	 \ge
	 \alpha_{i}p_{j}\bar{F}_{\alpha_{j}}(x)$,
	and \\
	$(p,\alpha)\in \mathcal{L}_{n}$,
	then $V_{n}(p,\alpha)\ge_{st}W_{n}(q,\beta)$.
	\end{enumerate}	
\end{corollary}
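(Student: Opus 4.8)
The plan is to reduce Corollary \ref{c-1} directly to Theorem \ref{th-2} by exploiting the hypothesis that the T-transform matrices $T_{1},\dots,T_{k}$ share a common structure. The excerpt has already recorded, following \cite{balakrishnan2014stochastic}, the structural fact that a finite product of T-transform matrices with the same structure is again a T-transform matrix. So my first step is to set $T := T_{1}T_{2}\cdots T_{k}$ and invoke this fact to conclude that $T$ is itself a single T-transform matrix. This collapses the chain into a one-step transform and is the only genuinely non-formal ingredient of the argument.

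Once the product has been collapsed, the identity $(q,\beta)=(p,\alpha)\,T_{1}\cdots T_{k}=(p,\alpha)\,T$ places us exactly in the hypothesis of Theorem \ref{th-2}, namely $B=AT$ with $A=(p,\alpha)$. For part (i), the assumption $(p,\alpha)\in\mathcal{K}_{n}$ is precisely the remaining hypothesis of Theorem \ref{th-2}(i), so that theorem yields $V_{n}(p,\alpha)\le_{st}W_{n}(q,\beta)$. For part (ii), the assumptions $(p,\alpha)\in\mathcal{L}_{n}$ together with $\alpha_{j}p_{i}\bar{F}_{\alpha_{i}}(x)\ge\alpha_{i}p_{j}\bar{F}_{\alpha_{j}}(x)$ match the hypotheses of Theorem \ref{th-2}(ii), giving $V_{n}(p,\alpha)\ge_{st}W_{n}(q,\beta)$. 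No new computation with the mixture survival function is needed; everything is inherited.

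The step I expect to carry the real weight is the collapsing of the product, and it is worth underlining why the common-structure assumption is indispensable rather than cosmetic. A naive induction on $k$, peeling off one $T_{j}$ at a time, would force an application of Theorem \ref{th-2} to each intermediate matrix $(p,\alpha)\,T_{1}\cdots T_{j}$, which in turn requires that every such partial product still lie in $\mathcal{K}_{n}$ (respectively $\mathcal{L}_{n}$). Since a T-transform need not preserve membership in these cones, the induction would stall. The common-structure hypothesis circumvents this entirely: it guarantees that the whole product is a single T-transform, so the cone condition is only ever verified on the original matrix $(p,\alpha)$, exactly as in Theorem \ref{th-2}. Thus the crux is recognizing that the hypothesis is tailored precisely to make one application of Theorem \ref{th-2} legitimate, after which the conclusion follows at once.
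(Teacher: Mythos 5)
Your proposal is correct and is exactly the argument the paper intends: the paper states Corollary \ref{c-1} without proof, relying on the fact recorded in Section \ref{sec-2} (following \cite{balakrishnan2014stochastic}) that a finite product of T-transform matrices with the same structure is again a T-transform matrix, which collapses $(q,\beta)=(p,\alpha)T_{1}\cdots T_{k}$ into the single-transform hypothesis $(q,\beta)=(p,\alpha)T$ of Theorem \ref{th-2}. Your additional remark about why the common-structure assumption is essential (intermediate products need not stay in $\mathcal{K}_{n}$ or $\mathcal{L}_{n}$) correctly identifies the role this hypothesis plays, and is precisely the issue the paper's Corollary \ref{c-2} addresses by assuming the intermediate products remain in the relevant cone.
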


Example \ref{e-2} provides an illustration of result in Corollary \ref{c-1}.

\begin{example}\label{e-2}  %co1 F-2
	Consider the finite mixture $V_{n}(p,\alpha)$ and $W_{n}(q,\beta)$, where
	$(p,\alpha)\in\mathcal{K}_{n}$,
	$\lambda=0.2$,
    $(p_{1},p_{2},p_{3})=(0.2,0.3,0.5)$, $(\alpha_{1},\alpha_{2},\alpha_{3})=(0.5,0.3,0.1)$,
	$(q_{1},q_{2},q_{3})=(0.2,0.388,0.412)$,
	and
	$(\beta_{1},\beta_{2},\beta_{3})=(0.5,0.212,0.188)$.
	We take  $\bar{F}(x)=exp\left\{-ax\right\}$ with $a=2$.
	Consider T-transform matrices $T_{1}$ and $T_{2}$ (with the same structure) as follows\\
	
	\begin{center}
		$T_{1}=\begin{pmatrix}
			       1& 0& 0\\
			       0& 0.4& 0.6\\
			       0& 0.6& 0.4
		       \end{pmatrix}$,
	and
	   $T_{2}=\begin{pmatrix}
                  1& 0& 0\\
                  0& 0.2& 0.8\\
                  0& 0.8& 0.2
	   	      \end{pmatrix}$
	\end{center}
    
    \begin{figure}[htbp]\label{F-2}
    	\centering
    	\includegraphics[width=0.8\linewidth]{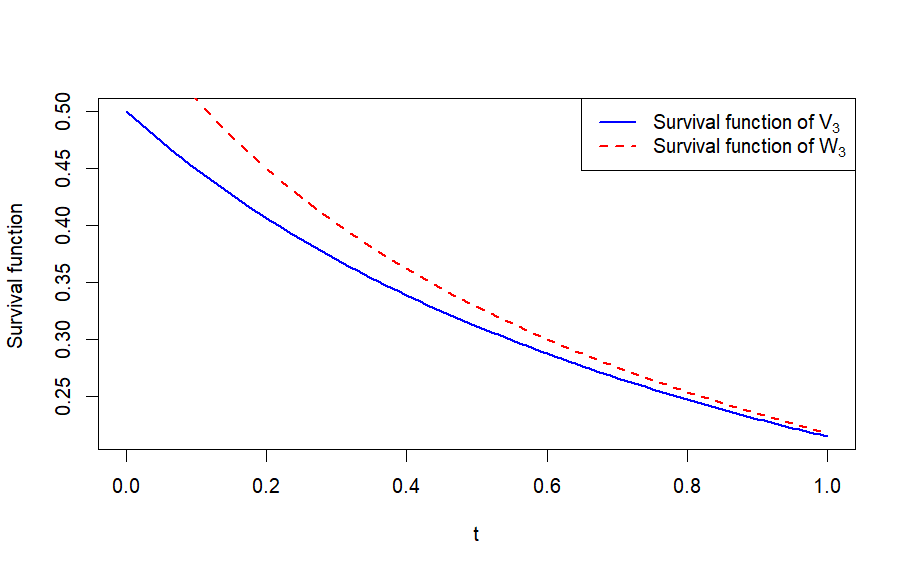}
    	\caption{Plots of $\bar{F}_{V_{3}(p,\alpha)}(x)$ and $\bar{F}_{W_{3}(q,\beta)}(x)$ for $x=t/(1-t)$, $t\in[0,1]$.}
    	\label{fig:2}
    \end{figure}
    
   Then, it is easy to observe that $(q,\beta)=(p,\alpha)T_{1}T_{2}$. 
   From Figure \ref{F-2} it is observed that
   $V_{3}(p,\alpha)\le _{st}W_{3}(q,\beta)$.
   Therefore, the validity of Corollary \ref{c-1} is verified.
\end{example}

Corollary \ref{c-2} discusses stochastic comparison of usual stochastic  order in the presence of chain majorization  in two $n\times n$-order matrices. In the process of chain optimization, the T-transform matrices has the different structure.
\begin{corollary}\label{c-2}  %引理2
	Let 
	$\bar{F}_{V_{2}{(p,\alpha)}}(x)=\sum_{i=1}^{n} p_{i} \frac{\alpha _{i} {\bar{F}^{\lambda }(x) } }{1-\bar{\alpha_{i} }\bar{F}^{\lambda}(x)}$
	and 
	$\bar{F}_{W_{2}{(q,\beta)}}(x)=\sum_{i=1}^{n} q_{i} \frac{\beta_{i}{\bar{F}^{\lambda}(x)}}{1-\bar{\beta_{i}}\bar{F}^{\lambda}(x)}$
	be the survival functions of the finite mixtures with modified proportional hazard rates model corresponding to $V_{n}(p,\alpha)$ and $W_{n}(q,\beta)$, if T-transform matrices have the different structure, respectively.
	
	\begin{enumerate} [\rm (i)]
		\item If 
		$\begin{pmatrix}
			q_{1}&\cdots&q_{n}\\
			\beta_{1}&\cdots&\beta_{n}
		 \end{pmatrix}
		 =
		 \begin{pmatrix}
		 	p_{1}&\cdots& p_{n}\\
		 	\alpha_{1}&\cdots&\alpha_{n}
		 \end{pmatrix}
		 T_{1}\cdots T_{k} $,
		 $(p,\alpha)\in \ \mathcal{K}_{n}$ 
		and $(p,\alpha)T_{1}\cdots T_{i} \in\ \mathcal{H}_{n}$, 
		then $V_{n}(p,\alpha)\le _{st}W_{n}(q,\beta)$. \\
		\item If 
		$\begin{pmatrix}
			q_{1}&\cdots&q_{n}\\
			\beta_{1}&\cdots&\beta_{n}
		 \end{pmatrix}
		 =
		 \begin{pmatrix}
		 	p_{1}&\cdots& p_{n}\\
		 	\alpha_{1}&\cdots&\alpha_{n}
		 \end{pmatrix}
		 T_{1}\cdots T_{k} $,
		 $\alpha_{j}p_{i}\bar{F}_{\alpha_{i}}(x)
		 \ge
		 \alpha_{i}p_{j}\bar{F}_{\alpha_{j}}(x)$,
		 $(p,\alpha)\in \ \mathcal{S}_{n}$
		and $(p,\alpha)T_{1}\cdots T_{i}\in\ \mathcal{L}_{n}$,
		then $V_{n}(p,\alpha)\ge_{st}W_{n}(q,\beta)$. \\
	Here $i=1,\cdots,k-1$, where $k\ge2$.
	\end{enumerate}	
    
\end{corollary}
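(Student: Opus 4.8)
The plan is to reduce the claim to a telescoping application of Theorem \ref{th-2}, exploiting the transitivity of the usual stochastic order. Since the $T$-transform matrices $T_1,\dots,T_k$ no longer share a common structure, their product $T_1\cdots T_k$ need not itself be a single $T$-transform, so I cannot invoke Theorem \ref{th-2} in one stroke as was done in Corollary \ref{c-1}. Instead I would introduce the sequence of intermediate parameter matrices
\begin{equation*}
A_0=(p,\alpha),\qquad A_i=(p,\alpha)\,T_1\cdots T_i \ \ (i=1,\dots,k),\qquad A_k=(q,\beta),
\end{equation*}
so that each consecutive pair satisfies $A_i=A_{i-1}T_i$; that is, $A_i$ differs from its predecessor by exactly one $T$-transform.

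First I would treat part (i). For each $i=1,\dots,k$ the matrix $A_{i-1}$ is assumed to lie in $\mathcal{K}_n$: the case $i=1$ is the hypothesis $(p,\alpha)\in\mathcal{K}_n$, while the cases $i=2,\dots,k$ require $A_1,\dots,A_{k-1}\in\mathcal{K}_n$, which is precisely the hypothesis $(p,\alpha)T_1\cdots T_i\in\mathcal{K}_n$ for $i=1,\dots,k-1$. Hence Theorem \ref{th-2}(i), applied with $A_{i-1}$ in the role of $(p,\alpha)$ and $A_i=A_{i-1}T_i$ in the role of $(q,\beta)$, yields $V(A_{i-1})\le_{st}V(A_i)$ for every $i$. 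Chaining these comparisons through the transitivity of $\le_{st}$ gives
\begin{equation*}
V_n(p,\alpha)=V(A_0)\le_{st}V(A_1)\le_{st}\cdots\le_{st}V(A_k)=W_n(q,\beta),
\end{equation*}
which is the asserted inequality. For part (ii) I would argue in exactly the same fashion, invoking Theorem \ref{th-2}(ii) at each step: the relevant hypotheses are that every intermediate matrix $A_{i-1}$ lies in $\mathcal{L}_n$ and that the accompanying sign condition $\alpha_j p_i\bar{F}_{\alpha_i}(x)\ge\alpha_i p_j\bar{F}_{\alpha_j}(x)$ holds for its entries, so that each single step gives $V(A_{i-1})\ge_{st}V(A_i)$, and transitivity then delivers $V_n(p,\alpha)\ge_{st}W_n(q,\beta)$.

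The step I expect to be delicate is not the chaining, which is purely formal, but the verification that each intermediate matrix $A_i$ remains in the required cone $\mathcal{K}_n$ (respectively $\mathcal{L}_n$), together with the sign condition in part (ii). This is exactly why these memberships are imposed as hypotheses rather than derived: a $T$-transform of a matrix in $\mathcal{K}_n$ need not stay in $\mathcal{K}_n$ once the transforms have differing structures, in contrast to the common-structure situation of Corollary \ref{c-1}, where the product is again a single $T$-transform and the intermediate conditions are automatic. Thus the content of the corollary is the observation that, once these intermediate cone and sign constraints are assumed along the entire chain $A_0,\dots,A_{k-1}$, the one-step comparison of Theorem \ref{th-2} propagates through the whole product and the desired order between the endpoints follows.
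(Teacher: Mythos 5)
Your proposal is correct and is precisely the argument the paper intends: Corollary \ref{c-2} is stated without an explicit proof, and the only route to it is the one you give — iterate Theorem \ref{th-2} along the intermediate matrices $A_i=(p,\alpha)T_1\cdots T_i$, which is legitimate exactly because the hypotheses place each $A_i$ in the required class, and then chain by transitivity of $\le_{\rm st}$. You have also sensibly read the paper's undefined symbols $\mathcal{H}_n$ and $\mathcal{S}_n$ as typographical errors for $\mathcal{K}_n$ and $\mathcal{L}_n$, and correctly identified that in part (ii) the sign condition must be understood to hold entrywise along the whole chain, not merely for the initial matrix.
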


Example \ref{e-3} provides an illustration of result in Corollary \ref{c-2}.
\begin{example} \label{e-3} %co2 F-3
   Consider the finite mixture $V_{n}(p,\alpha)$ and $W_{n}(q,\beta)$, where 
   $(p,\alpha)\in\mathcal{K}_{n}$,
   $\lambda=0.2$,
   $(p_{1},p_{2},p_{3})=(0.1,0.4,0.5)$,
   $(\alpha_{1},\alpha_{2},\alpha_{3})=(0.7,0.5,0.3)$,
   $(q_{1},q_{2},q_{3})=(0.4192,0.248,0.\\3328)$,
   and
   $(\beta_{1},\beta_{2},\beta_{3})=(0.4456,0.568,0.4904)$.
   We take  $\bar{F}(x)=exp\left\{-ax\right\}$ with $a=3$.
   Consider the T-transform matrices $T_{1}$,$T_{2}$, and $T_{3}$ as follows\\
   \begin{center}
   	   $T_{1}=\begin{pmatrix}
   	   	         1& 0&0&\\
   	   	         0& 0.3& 0.7\\
   	   	         0& 0.7& 0.3
   	          \end{pmatrix}$,
   	   $T_{2}=\begin{pmatrix}
   	   	         0.4& 0.6& 0\\
   	   	         0.6& 0.4& 0\\
   	   	         0& 0& 1
   	          \end{pmatrix}$,
   	   and
   	   $T_{3}=\begin{pmatrix}
   	   	         0.1& 0& 0.9\\
   	   	         0& 1& 0\\
   	   	         0.9& 0& 0.1
   	          \end{pmatrix}$   
   \end{center}
   
   \begin{figure}[htbp]\label{F-3}
   	\centering
   	\includegraphics[width=0.8\linewidth]{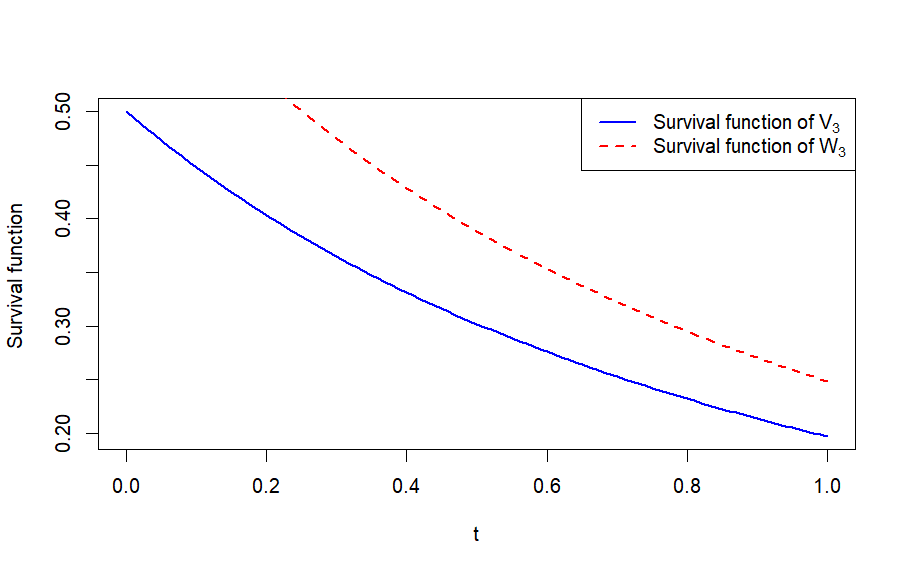}
   	\caption{Plots of $\bar{F}_{V_{3}(p,\alpha)}(x)$ and $\bar{F}_{W_{3}(q,\beta)}(x)$ for $x=t/(1-t)$, $t\in[0,1]$.}
   	\label{fig:3}
   \end{figure}

   It is easy to be observed that
    $(p,\alpha)$, $(p,\alpha)T_{1}$ 
    and  $(p,\alpha)T_{1}T_{2}$ are all in $\mathcal{K}_{n}$, and $(q,\beta)=(p,\alpha)T_{1}T_{2}T_{3}.$
    From Figure \ref{F-3} it is observed that
    $V_{3}(p,\alpha)\le _{st}W_{3}(q,\beta)$.
    Therefore, the validity of Corollary \ref{c-2} is verified.
\end{example}
\subsection{Usual stochastic orders of mixture Proportions and Modified  Proportional Hazard Rate Parameters}\label{sec-3-2} 
    The following Theorems  and Corollaries show that two random variables are compared by chain optimization under usual stochastic order when the mixture proportions $p_{i}$ and modified proportional hazard rate $\lambda_{i}$ are different.
    
    Theorem \ref{th-3} addresses the issue of comparing usual stochastic order when chain majorization is present in $2\times2$-order matrices.
\begin{theorem}\label{th-3}  %定理3
	Let 
	$\bar{F}_{Z_{2}(p,\lambda)}(x)=\sum_{i=1}^{2} p_{i} \frac{\alpha {\bar{F}^{\lambda_{i}}(x) } }{1-\bar{\alpha}\bar{F}^{\lambda_{i}}(x)}$
	and $\bar{F}_{Y_{2}(q,\theta)}(x)=\sum_{i=1}^{2} q_{i} \frac{\alpha{\bar{F}^{\theta_{i}}(x)}}{1-\bar{\alpha}\bar{F}^{\theta_{i}}(x)}$
	be the survival functions of the finite mixtures with modified proportional hazard rates model corresponding to $Z_{2}(p,\lambda)$ and $Y_{2}(q,\theta)$, respectively.
	
	\begin{enumerate} [\rm (i)]
		\item If 
		$\begin{pmatrix}
			p_{1}& p_{2}\\
			\lambda_{1}&\lambda_{2}
		 \end{pmatrix}
		 \gg
		 \begin{pmatrix}
		 	q_{1}&q_{2}\\
		 	\theta_{1}&\theta_{2}
		 \end{pmatrix}$
		and $(p,\lambda)\in \ \mathcal{K}_{2}$,
		then $Z_{2}(p,\lambda)\ge _{st}Y_{2}(q,\theta)$. \\
		\item If 
		$\begin{pmatrix}
			p_{1}& p_{2}\\
			\lambda_{1}&\lambda_{2}
		\end{pmatrix}
		\gg
		\begin{pmatrix}
			q_{1}&q_{2}\\
			\theta_{1}&\theta_{2}
		\end{pmatrix}$,
		 $\frac{p_{1}\bar{F}_{\lambda_{1}}(x)}
		 {1-\bar{\alpha}\bar{F}^{\lambda_{1}}(x)}
		 \ge
		 \frac{p_{2}\bar{F}_{\lambda_{2}}(x)}
		 {1-\bar{\alpha}\bar{F}^{\lambda_{2}}(x)}$,
	    and $(p,\lambda)\in\mathcal{L}_{2}$,
		then\\
		$Z_{2}(p,\lambda)\le _{st}Y_{2}(q,\theta)$.
		
	\end{enumerate}	
	
\end{theorem}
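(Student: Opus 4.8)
The plan is to apply Lemma \ref{lem-1} with $G(p,\lambda)=\bar{F}_{Z_{2}(p,\lambda)}(x)$, mirroring the proof of Theorem \ref{th-1}, with $x>0$ fixed and the common tilt parameter $\alpha$ treated as a constant. Condition (i) of Lemma \ref{lem-1} is immediate, since the mixture $\sum_i p_i\,\alpha\bar{F}^{\lambda_i}/(1-\bar\alpha\bar{F}^{\lambda_i})$ is invariant under any simultaneous permutation of the columns $(p_i,\lambda_i)$, so $G$ is permutation invariant on both $\mathcal K_{2}$ and $\mathcal L_{2}$. The substantive work is Condition (ii), namely to determine the sign of
$$H(p,\lambda)=(p_{1}-p_{2})\Bigl(\tfrac{\partial G}{\partial p_{1}}-\tfrac{\partial G}{\partial p_{2}}\Bigr)+(\lambda_{1}-\lambda_{2})\Bigl(\tfrac{\partial G}{\partial \lambda_{1}}-\tfrac{\partial G}{\partial \lambda_{2}}\Bigr).$$

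First I would record the two partial derivatives. Differentiation in $p_{i}$ simply returns the $i$-th kernel, $\partial G/\partial p_{i}=\alpha\bar{F}^{\lambda_{i}}/(1-\bar\alpha\bar{F}^{\lambda_{i}})$, while differentiation in $\lambda_{i}$ gives
$$\frac{\partial G}{\partial \lambda_{i}}=\frac{p_{i}\,\alpha\bar{F}^{\lambda_{i}}\ln\bar{F}(x)}{(1-\bar\alpha\bar{F}^{\lambda_{i}})^{2}},$$
the genuinely new feature being the factor $\ln\bar{F}(x)\le 0$ that was absent in Theorem \ref{th-1}. Since $g(u)=\alpha u/(1-\bar\alpha u)$ is increasing in $u$ and $0<\bar{F}(x)<1$, the first parenthesis $g(\bar{F}^{\lambda_{1}})-g(\bar{F}^{\lambda_{2}})$ carries the sign of $\lambda_{2}-\lambda_{1}$, and the second parenthesis factors as $(\lambda_{1}-\lambda_{2})\alpha\ln\bar{F}(x)$ times $\bigl[p_{1}\bar{F}^{\lambda_{1}}/(1-\bar\alpha\bar{F}^{\lambda_{1}})^{2}-p_{2}\bar{F}^{\lambda_{2}}/(1-\bar\alpha\bar{F}^{\lambda_{2}})^{2}\bigr]$.

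For part (i), the hypothesis $(p,\lambda)\in\mathcal K_{2}$ gives $(p_{1}-p_{2})(\lambda_{1}-\lambda_{2})\le 0$, and it suffices to treat $p_{1}\ge p_{2},\ \lambda_{1}\le\lambda_{2}$ (the other case being symmetric). Then $\bar{F}^{\lambda_{1}}\ge\bar{F}^{\lambda_{2}}$, so the first term of $H$ is a product of nonnegative factors; in the second term $(\lambda_{1}-\lambda_{2})\alpha\ln\bar{F}(x)\ge 0$, while the bracket is nonnegative because $p_{1}\ge p_{2}$, $\bar{F}^{\lambda_{1}}\ge\bar{F}^{\lambda_{2}}$ and $1-\bar\alpha\bar{F}^{\lambda_{1}}\le 1-\bar\alpha\bar{F}^{\lambda_{2}}$ (using $\bar\alpha\ge 0$). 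Hence $H\ge 0$, and Lemma \ref{lem-1} yields $G(A)\ge G(B)$, i.e. $Z_{2}(p,\lambda)\ge_{st}Y_{2}(q,\theta)$. For part (ii) the hypothesis $(p,\lambda)\in\mathcal L_{2}$ gives $(p_{1}-p_{2})(\lambda_{1}-\lambda_{2})\ge 0$; taking $p_{1}\ge p_{2},\ \lambda_{1}\ge\lambda_{2}$, the first term of $H$ is automatically $\le 0$, but now the bracket pits $p_{1}\ge p_{2}$ against $\bar{F}^{\lambda_{1}}\le\bar{F}^{\lambda_{2}}$ and its sign is no longer forced. This is precisely where the supplementary hypothesis enters: the stated inequality $p_{1}\bar{F}_{\lambda_{1}}/(1-\bar\alpha\bar{F}^{\lambda_{1}})\ge p_{2}\bar{F}_{\lambda_{2}}/(1-\bar\alpha\bar{F}^{\lambda_{2}})$ unwinds, via $\bar{F}_{\lambda_{i}}=\alpha\bar{F}^{\lambda_{i}}/(1-\bar\alpha\bar{F}^{\lambda_{i}})$, to exactly the assertion that this bracket is $\ge 0$, forcing the second term $\le 0$; thus $H\le 0$ and $Z_{2}(p,\lambda)\le_{st}Y_{2}(q,\theta)$.

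The main obstacle, and the only point that is genuinely new relative to Theorem \ref{th-1}, is controlling the contribution of the $\lambda$-derivatives: the factor $\ln\bar{F}(x)\le 0$ must be tracked with care, since it is what reverses the direction of the ordering in part (i), and the bracket $p_{1}\bar{F}^{\lambda_{1}}/(1-\bar\alpha\bar{F}^{\lambda_{1}})^{2}-p_{2}\bar{F}^{\lambda_{2}}/(1-\bar\alpha\bar{F}^{\lambda_{2}})^{2}$ has an indeterminate sign under $\mathcal L_{2}$, which is the structural reason the extra inequality is imposed in part (ii).
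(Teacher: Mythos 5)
Your proposal is correct and takes essentially the same route as the paper's own proof: both apply Lemma \ref{lem-1} to $G(p,\lambda)=\bar{F}_{Z_{2}(p,\lambda)}(x)$, compute the same partial derivatives in $p_{i}$ and $\lambda_{i}$, and sign-analyze $H(p,\lambda)$ under $\mathcal{K}_{2}$ and $\mathcal{L}_{2}$, with the supplementary inequality in part (ii) entering precisely to force the $\lambda$-bracket to be nonnegative. If anything, you make explicit a detail the paper glosses over, namely that in part (i) the bracket $p_{1}\bar{F}^{\lambda_{1}}(x)\left(1-\bar{\alpha}\bar{F}^{\lambda_{2}}(x)\right)^{2}-p_{2}\bar{F}^{\lambda_{2}}(x)\left(1-\bar{\alpha}\bar{F}^{\lambda_{1}}(x)\right)^{2}$ is nonnegative because $p_{1}\ge p_{2}$, $\bar{F}^{\lambda_{1}}(x)\ge\bar{F}^{\lambda_{2}}(x)$ and $\bar{\alpha}\ge 0$.
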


\begin{proof}  %定理3证明
	The survival function of $Z_{2}(p,\lambda)$ can be expressed as

	 $$\bar{F}_{Z_{2}(p,\lambda)}(x)=\sum_{i=1}^{2} p_{i} \Frac{\alpha {\bar{F}^{\lambda_{i}}(x) } }{1-\bar{\alpha}\bar{F}^{\lambda_{i}}(x)}$$.

    To establish the desired result, we have to check condition (i) and (ii) of lemma \ref{lem-1}. Clearly, $Z_{2}(p,\lambda)$ is permutation invariant on $\mathcal{L}_{2}$ and $\mathcal{K}_{2}$, for fixed $x>0$, which confirms condition (i). Now, for fixed $x>0$ and $i\neq j$, consider the function H
    
    \begin{equation} %H(p,\lambda)
    	\begin{aligned}
    	H(p,\lambda)&
    	=(p_{1}-p_{2})
    	\left(\frac{\partial\bar{F}_{Z_{2}(p,\lambda)}(x)}
    	{\partial p_{1}}
    	-\frac{\partial\bar{F}_{Z_{2}(p,\lambda)}(x)}
    	{\partial p_{2}}\right)
    	+(\lambda_{1}-\lambda_{2})
    	\left(\frac{\partial\bar{F}_{Z_{2}(p,\lambda)}(x)}
    	{\partial\lambda_{1}}
    	-\frac{\partial\bar{F}_{Z_{2}(p,\lambda)}(x)}
    	{\partial\lambda_{2}}\right).
    	 \end{aligned} 
    	 \label{3}
    \end{equation}
    
    The partial derivatives of $\bar{F}_{Z_{2}(p,\lambda)}(x)$ with respect to $p_{i}$ and $\lambda_{i}$ are\\
    
    \begin{center}   %对pi \lambda i 求导
    	$\Frac{\partial\bar{F}_{Z_{2}(p,\lambda)}(x)}
    	{\partial p_{i}}
    	=\Frac{\alpha\bar{F}^{\lambda_{i}}(x)}
    	{1-\bar{\alpha}\bar{F}^{\lambda_{i}}(x)}$
    and
        $\Frac{\partial\bar{F}_{Z_{2}(p,\lambda)}(x)}
        {\partial\lambda_{i}}
        =\Frac{\alpha p_{i}\bar{F}^{\lambda_{i}}(x)\ln\bar{F}(x)}
        {\left(1-\bar{\alpha}\bar{F}^{\lambda_{i}}(x)\right)^{2}}$,
    \end{center}
    respectively. Now, upon substituting these expressions in (\ref{3}), we obtain\\
    
    %对p1-p2
    $\Frac{\partial\bar{F}_{Z_{2}(p,\lambda)}(x)}{\partial p_{1}}
    -\Frac{\partial\bar{F}_{Z_{2}(p,\lambda)}(x)}{\partial p_{2}}
    =
    \Frac{\alpha(\bar{F}^{\lambda_{1}}(x)
    	-\bar{F}^{\lambda_{2}}(x))}
    {\left(1-\bar\alpha\bar{F}^{\lambda_{1}}(x)\right)
    	\left(1-\bar\alpha\bar{F}^{\lambda_{2}}(x)\right)}$,\\
    and\\	%对\lambda_{1}-\lambda_{2} 
   
   $\Frac{\partial\bar{F}_{Z_{2}(p,\lambda)}(x)}{\partial\lambda_{1}}
   -\Frac{\partial\bar{F}_{Z_{2}(p,\lambda)}(x)}{\partial\lambda_{2}}
   =
   \Frac{\alpha \ln\bar{F}(x)\left[	p_{1}\bar{F}^{\lambda_{1}}(x)
   	\left(1-\bar{\alpha}\bar{F}^{\lambda_{2}}(x)\right)^{2}
   	-
   	p_{2}\bar{F}^{\lambda_{2}}(x)
   	\left(1-\bar{\alpha}\bar{F}^{\lambda_{1}}(x)\right)^{2}\right]
   }
   {\left(1-\bar{\alpha}\bar{F}^{\lambda_{1}}(x)\right)^{2}
   	\left(1-\bar{\alpha}\bar{F}^{\lambda_{2}}(x)\right)^{2}}
   $,\\
   	then
    \begin{equation}%求H(P,\lambda)
      \begin{aligned}   
    	H(p,\lambda)
    	&=(p_{1}-p_{2})
    	\left(\frac{\partial\bar{F}_{Z_{2}(p,\lambda)}(x)}
    	{\partial p_{1}}
    	-\frac{\partial\bar{F}_{Z_{2}(p,\lambda)}(x)}
    	{\partial p_{2}}\right)\\
    	&\quad+(\lambda_{1}-\lambda_{2})
    	\left(\frac{\partial\bar{F}_{Z_{2}(p,\lambda)}(x)}
    	{\partial\lambda_{1}}
    	-\frac{\partial\bar{F}_{Z_{2}(p,\lambda)}(x)}
    	{\partial\lambda_{2}}\right) \\
    	&=\frac{\alpha(p_{1}-p_{2})
    		\left(\bar{F}^{\lambda_{1}}(x)-\bar{F}^{\lambda_{2}}(x)\right)}
    	{\left(1-\bar\alpha\bar{F}^{\lambda_{1}}(x)\right)
    		\left(1-\bar\alpha\bar{F}^{\lambda_{2}}(x)\right)}\\
    	&\quad+\Frac{\alpha\ln\bar{F}(x)
    		(\lambda_{1}-\lambda_{2})
    		\left[	p_{1}\bar{F}^{\lambda_{1}}(x)
    		\left(1-\bar{\alpha}\bar{F}^{\lambda_{2}}(x)\right)^{2}
    		-
    		p_{2}\bar{F}^{\lambda_{2}}(x)
    		\left(1-\bar{\alpha}\bar{F}^{\lambda_{1}}(x)\right)^{2}\right]
    	}
    	{\left(1-\bar{\alpha}\bar{F}^{\lambda_{1}}(x)\right)^{2}
    		\left(1-\bar{\alpha}\bar{F}^{\lambda_{2}}(x)\right)^{2}}\\
    	&=\frac{(p_{1}-p_{2})
    		\alpha
    		\left(\bar{F}^{\lambda_{1}}(x)-\bar{F}^{\lambda_{2}}(x)\right)
    		\left(1-\bar\alpha\bar{F}^{\lambda_{1}}(x)\right)
    		\left(1-\bar\alpha\bar{F}^{\lambda_{2}}(x)\right)}
    	{\left(1-\bar{\alpha}\bar{F}^{\lambda_{1}}(x)\right)^{2}
    		\left(1-\bar{\alpha}\bar{F}^{\lambda_{2}}(x)\right)^{2}}\\
    	&\quad+\frac{\alpha\ln\bar{F}(x) p_{1}\bar{F}^{\lambda_{1}}(x)
    		(\lambda_{1}-\lambda_{2})
    		\left(1-\bar{\alpha}\bar{F}^{\lambda_{2}}(x)\right)^{2}}
    	{\left(1-\bar{\alpha}\bar{F}^{\lambda_{1}}(x)\right)^{2}
    		\left(1-\bar{\alpha}\bar{F}^{\lambda_{2}}(x)\right)^{2}}\\
    	&\quad-\frac{\alpha\ln\bar{F}(x) p_{2}\bar{F}^{\lambda_{2}}(x)(\lambda_{1}-\lambda_{2})
    		\left(1-\bar{\alpha}\bar{F}^{\lambda_{1}}(x)\right)^{2}}
    	{\left(1-\bar{\alpha}\bar{F}^{\lambda_{1}}(x)\right)^{2}
    		\left(1-\bar{\alpha}\bar{F}^{\lambda_{2}}(x)\right)^{2}}\\
    	&\overset{\rm{sgn}}=
    	(p_{1}-p_{2})
    	\alpha
    	\left(\bar{F}^{\lambda_{1}}(x)-\bar{F}^{\lambda_{2}}(x)\right)
    	\left(1-\bar\alpha\bar{F}^{\lambda_{1}}(x)\right)
    	\left(1-\bar\alpha\bar{F}^{\lambda_{2}}(x)\right)\\
    	&\quad+\alpha\ln\bar{F}(x) p_{1}\bar{F}^{\lambda_{1}}(x)(\lambda_{1}-\lambda_{2})
    	\left(1-\bar{\alpha}\bar{F}^{\lambda_{2}}(x)\right)^{2}\\
    	&\quad-\alpha\ln\bar{F}(x) p_{2}\bar{F}^{\lambda_{2}}(x)(\lambda_{1}-\lambda_{2})
    	\left(1-\bar{\alpha}\bar{F}^{\lambda_{1}}(x)\right)^{2}\\
    	&\overset{\rm{sgn}}=
    	(p_{1}-p_{2})
    	\left(\bar{F}^{\lambda_{1}}(x)-\bar{F}^{\lambda_{2}}(x)\right)
    	\left(1-\bar\alpha\bar{F}^{\lambda_{1}}(x)\right)
    	\left(1-\bar\alpha\bar{F}^{\lambda_{2}}(x)\right)\\
    	&\quad+\ln\bar{F}(x)(\lambda_{1}-\lambda_{2})
    	\left[p_{1}\bar{F}^{\lambda_{1}}(x)
    	\left(1-\bar{\alpha}\bar{F}^{\lambda_{2}}(x)\right)^{2}
    	-p_{2}\bar{F}^{\lambda_{2}}(x)
    	\left(1-\bar{\alpha}\bar{F}^{\lambda_{1}}(x)\right)^{2}\right].
    	\qquad  \qquad  \qquad  \qquad  \qquad  (4)
       \end{aligned}
     \label{4}
    \end{equation}

	The assumption that $(p,\lambda)\in\mathcal{K}_{2}$ implies that
    \begin{center}
		$(p_{1}-p_{2})(\lambda_{1}-\lambda_{2})\le0,$
	\end{center}
	which means that  $p_{1}\ge p_{2}$ and $\lambda_{1}\le\lambda_{2}$ (or $p_{1}\le p_{2}$ and $\lambda_{1}\ge \lambda_{2}$). 
	We present the proof only for the case when $p_{1}\ge p_{2}$ and $\lambda_{1} \le\lambda_{2}$, since the proof for the other case is quite similar. 
	
	Because $\bar{F}(x)$ is a decrease function with respect to $x$, then
	 \begin{center}
	 	$\bar{F}^{\lambda_{1}}(x)\ge\bar{F}^{\lambda_{2}}(x)$,
	 \end{center} 
	we readily observe that the first term on the right-hand of (\ref{3}) is positive.
	
    Moreover, according to the assumption that $\bar{F}(x)$ is within the scope of $[0,1]$ implies \begin{center}
    	$\ln\bar{F}(x)\le0$, 
    \end{center}
	the second term on the right -hand side of (\ref{4}) is also positive. Upon combining these observations, we have \begin{center}
		$Z_{2}(p,\lambda)\ge_{st} Y_{2}(q,\theta)$.
	\end{center}
	
	The assumption that $(p,\lambda)\in\mathcal{L}_{2}$ implies that \begin{center}
		$(p_{1}-p_{2})(\lambda_{1}-\lambda_{2})\ge0$,
	\end{center}
	 which means that $p_{1}\ge p_{2}$ and $\lambda_{1}\ge\lambda_{2}$ or ($p_{1}\le p_{2}$ and $\lambda_{1}\le \lambda_{2}$). 
	 We present the proof only for the case when $p_{1}\ge p_{2}$ and $\lambda_{1} \ge\lambda_{2}$, 
	 since the proof for the other case is quite similar. In this case, we have 
	 \begin{center}
	 	$\bar{F}^{\lambda_{1}}(x)\le \bar{F}^{\lambda_{2}}(x)$, 
	 \end{center}
	 the first term on the right-hand side of (\ref{4}) is non-positive. If $\frac{p_{1}\bar{F}_{\lambda_{1}}(x)}
	 {1-\bar{\alpha}\bar{F}^{\lambda_{1}}(x)}
	 \ge\frac{p_{2}\bar{F}_{\lambda_{2}}(x)}
	 {1-\bar{\alpha}\bar{F}^{\lambda_{2}}(x)}$, we have
	  \begin{center}
	 	$H(p,\lambda)\le0$.
	 \end{center} 
	 
	Condition (ii) of Lemma \ref{lem-1} is satisfied, 
	 implies that $Z_{2}(p,\lambda)\le_{st} Y_{2}(q,\theta)$. The proof of Theorem \ref{th-3} is completed.
\end{proof}
    Example \ref{e-4} provides an illustration of  Theorem \ref{th-3}.
\begin{example}\label{e-4} % th3 F-4
	Consider the finite mixture $Z_{2}(p,\lambda)$ and $Y_{2}(q,\theta)$, where $(p,\lambda)\in\mathcal{K}_{2}$,
    $\alpha=0.2$,
    $(p_{1},p_{2})=(0.2,0.8)$, 
	$(\lambda_{1},\lambda_{2})=(0.5,0.25)$,
	$(q_{1},q_{2})=(0.62,0.38)$
	and $(\theta_{1},\theta_{2})=(0.325,0.425)$.
	It can be observed that $(q,\theta)=(p,\lambda)T_{0.3}$,
	implies $(p,\lambda)\gg(q,\theta)$.
	We take $\bar{F}(x)=exp\left\{-ax\right\}$
	with $a=2$.
	The survival functions of the two finite mixture models are plotted in Figure \ref{F-4}. 
	It can be seen in Figure \ref{F-4} that $Z_{2}(p,\lambda)\ge _{st}Y_{2}(q,\theta)$.
	Therefore, the validity of Theorem \ref{th-3} has been confirmed.
	
	\begin{figure}[htbp]\label{F-4}
		\centering
		\includegraphics[width=0.8\linewidth]{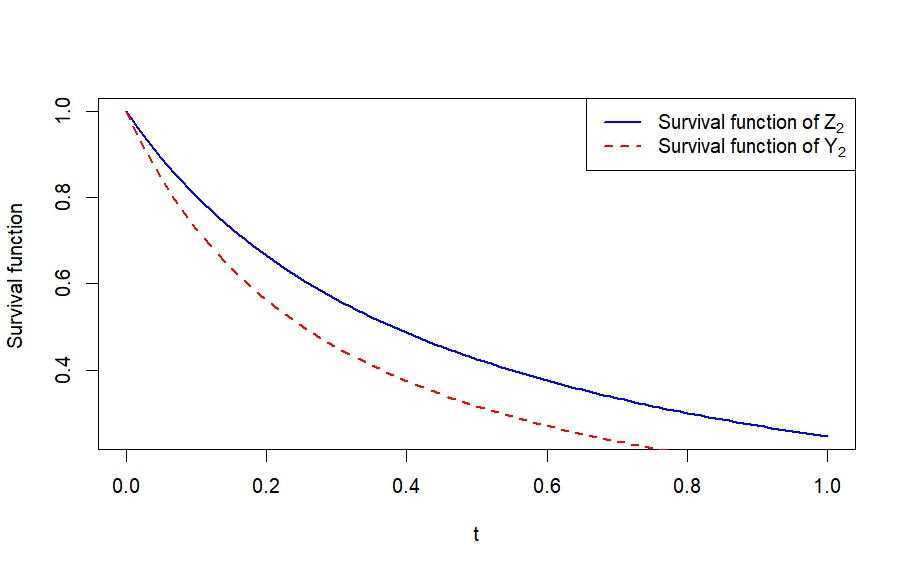}
		\caption{Plots of $\bar{F}_{Z_{2}(p,\lambda)}(x)$ and $\bar{F}_{Y_{2}(q,\theta)}(x)$ for $x=t/(1-t)$, $t\in[0,1]$.}
		\label{fig:4}
	\end{figure}
	
\end{example}

Theorem \ref{th-4} addresses the issue of comparing usual stochastic order when chain majorization is present in $n\times n$-order matrices.
\begin{theorem}\label{th-4} %定理4
	Let
	$\bar{F}_{Z_{n}{(p,\lambda)}}(x)=\sum_{i=1}^{n} p_{i} \frac{\alpha {\bar{F}^{\lambda_{i}}(x) } }{1-\bar{\alpha}\bar{F}^{\lambda_{i}}(x)}$
	and $\bar{F}_{Y_{n}{(q,\theta)}}(x)=\sum_{i=1}^{n} q_{i} \frac{\alpha{\bar{F}^{\theta_{i}}(x)}}{1-\bar{\alpha}\bar{F}^{\theta_{i}}(x)}$
	be the survival functions of the finite mixtures with modified proportional hazard rates model corresponding to $Z_{n}(p,\lambda)$ and $Y_{n}(q,\theta)$, respectively.
	\begin{enumerate} [\rm (i)]
		\item If 
		$\begin{pmatrix}
			q_{1}&\cdots&q_{n}\\
			\theta_{1}&\cdots&\theta_{n}
		 \end{pmatrix}
		=
		\begin{pmatrix}
			p_{1}&\cdots& p_{n}\\
			\lambda_{1}&\cdots&\lambda_{n}
		\end{pmatrix}T$
		and $(p,\lambda)\in \ \mathcal{K}_{n}$,
		then $Z_{n}(p,\lambda)\ge _{st}Y_{n}(q,\theta)$. \\
		\item If
		 $\begin{pmatrix}
			q_{1}&\cdots&q_{n}\\
			\theta_{1}&\cdots&\theta_{n}
		  \end{pmatrix}
		 =
		  \begin{pmatrix}
		  	p_{1}&\cdots&p_{n}\\
			\lambda_{1}&\cdots&\lambda_{n}
		  \end{pmatrix}T$,
		$\frac{p_{i}\bar{F}_{\lambda_{i}}(x)}
		{1-\bar{\alpha}\bar{F}^{\lambda_{i}}(x)}
		\ge
		\frac{p_{j}\bar{F}_{\lambda_{j}}(x)}
		{1-\bar{\alpha}\bar{F}^{\lambda_{j}}(x)}$,
		and $(p,\lambda)\in\mathcal{L}_{n}$,\\
		then $Z_{n}(p,\lambda)\le _{st}Y_{n}(q,\theta)$.	
	\end{enumerate}	
\end{theorem}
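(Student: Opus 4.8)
The plan is to reduce Theorem \ref{th-4} to the already-established two-dimensional case (Theorem \ref{th-3}) via the product structure provided by Lemma \ref{lem-2}, exactly as was done for Theorem \ref{th-2}. First I would write the common baseline contribution of each component as a single bivariate kernel. Setting $H(p,\lambda)=\alpha\, p\,\bar{F}^{\lambda}(x)/\bigl(1-\bar{\alpha}\bar{F}^{\lambda}(x)\bigr)$ for fixed $x>0$, the survival function of the mixture decomposes as
\begin{equation*}
\bar{F}_{Z_{n}(p,\lambda)}(x)=\sum_{i=1}^{n} p_{i}\,\frac{\alpha\,\bar{F}^{\lambda_{i}}(x)}{1-\bar{\alpha}\bar{F}^{\lambda_{i}}(x)}=\sum_{i=1}^{n} H(p_{i},\lambda_{i}),
\end{equation*}
so that $G_{n}(p,\lambda):=\bar{F}_{Z_{n}(p,\lambda)}(x)$ is precisely of the additively separable form $G_{n}(A)=\sum_{i=1}^{n}H(a_{1i},a_{2i})$ required in the hypothesis of Lemma \ref{lem-2}.

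Next I would verify that the two-variable restriction $G_{2}$ satisfies the hypotheses of Lemma \ref{lem-1}, which is exactly the content of Theorem \ref{th-3}. For part (i), Theorem \ref{th-3} shows that on $\mathcal{K}_{2}$ the permutation-invariance condition and the sign condition of Lemma \ref{lem-1} hold with the inequality oriented so that $Z_{2}(p,\lambda)\ge_{\rm st}Y_{2}(q,\theta)$ whenever $(p,\lambda)\gg(q,\theta)$; for part (ii), Theorem \ref{th-3} supplies the corresponding statement on $\mathcal{L}_{2}$ under the extra pointwise hypothesis $p_{i}\bar{F}_{\lambda_{i}}(x)/(1-\bar{\alpha}\bar{F}^{\lambda_{i}}(x))\ge p_{j}\bar{F}_{\lambda_{j}}(x)/(1-\bar{\alpha}\bar{F}^{\lambda_{j}}(x))$. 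Having checked that $G_{2}$ meets Lemma \ref{lem-1}, Lemma \ref{lem-2} immediately upgrades the conclusion from $G_{2}$ to $G_{n}$: for $A=(p,\lambda)\in\mathcal{K}_{n}$ (respectively $\mathcal{L}_{n}$) and $B=AT$ a single $T$-transform, we obtain $G_{n}(A)\ge G_{n}(B)$ (respectively $\le$), which translated back says $Z_{n}(p,\lambda)\ge_{\rm st}Y_{n}(q,\theta)$ in part (i) and $Z_{n}(p,\lambda)\le_{\rm st}Y_{n}(q,\theta)$ in part (ii).

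Since the hypotheses stipulate $(q,\theta)=(p,\lambda)T$ for a single $T$-transform matrix $T$, no further chaining is needed; the inequality $\bar{F}_{Z_{n}(p,\lambda)}(x)\ge\bar{F}_{Y_{n}(q,\theta)}(x)$ (or its reverse) holds for every fixed $x>0$, which is by definition the usual stochastic order. The orientation of the inequality is the only delicate bookkeeping point: because the baseline power enters through $\lambda_{i}$ and $\ln\bar{F}(x)\le0$, the sign of the second term in the expansion of $H$ in Theorem \ref{th-3} flips relative to Theorem \ref{th-1}, and this is exactly what forces the reversed conclusion $Z_{n}\ge_{\rm st}Y_{n}$ on $\mathcal{K}_{n}$. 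The main obstacle, therefore, is not a new computation but ensuring that the sign conventions and the direction of the extra hypothesis in part (ii) are transported faithfully from the $2\times2$ result through Lemma \ref{lem-2}; once that is granted, the $n$-dimensional statement follows verbatim from the separable-sum machinery.
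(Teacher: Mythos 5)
Your proposal is correct and takes essentially the same route as the paper's own proof: write the mixture survival function as the separable sum $G_{n}(p,\lambda)=\sum_{i=1}^{n}H(p_{i},\lambda_{i})$ with $H(p,\lambda)=\alpha p\bar{F}^{\lambda}(x)/\bigl(1-\bar{\alpha}\bar{F}^{\lambda}(x)\bigr)$, observe that the two-variable case is exactly the content of Theorem \ref{th-3}, and then apply Lemma \ref{lem-2} to lift the conclusion to $n$ components. Your citation of Theorem \ref{th-3} at the verification step is in fact more accurate than the paper's text, which cites Theorem \ref{th-1} there (evidently a typo, since Theorem \ref{th-1} concerns the $(p,\alpha)$ parametrization rather than $(p,\lambda)$).
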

\begin{proof} %定理4证明
	Setting $G_{n}(p,\lambda)=\bar{F}_{Z_{n}(p,\lambda)}(x)$ and $H(p,\lambda)=p\alpha\bar{F}^{\lambda}(x)/1-\bar{\alpha}\bar{F}^{\lambda}(x)$, we have $G_{n}(p,\lambda)=\sum_{i=1}^{n}H(p_{i},\lambda_{i})
	=\sum_{i=1}^{n}{p_{i}\alpha\bar{F}^{\lambda_{i}}(x)}/{1-\bar{\alpha}\bar{F}^{\lambda_{i}}(x)}$. According to Theorem \ref{th-1}, $G_{2}(p,\lambda)$ satisfied Lemma \ref{lem-1}. The result required by Theorem \ref{th-4} follows from Lemma \ref{lem-2}.  
\end{proof}

Corollary \ref{c-3} addresses the issue of comparing usual stochastic order when chain majorization is present in $n\times n$-order matrices. In the context of chain majorization, the T-transform matrices exhibits the same structure.
\begin{corollary}\label{c-3}   %引理3
	Let
	 $\bar{F}_{Z_{n}{(p,\lambda)}}(x)=\sum_{i=1}^{n} p_{i} \frac{\alpha {\bar{F}^{\lambda_{i}}(x) } }{1-\bar{\alpha}\bar{F}^{\lambda_{i}}(x)}$
	and $\bar{F}_{Y_{n}{(q,\theta)}}(x)=\sum_{i=1}^{n} q_{i} \frac{\alpha{\bar{F}^{\theta_{i}}(x)}}{1-\bar{\alpha}\bar{F}^{\theta_{i}}(x)}$
	be the survival functions of the finite mixtures with modified proportional hazard rates model corresponding to $Z_{n}(p,\lambda)$ and $Y_{n}(q,\theta)$, if T-transform matrices $T_{1},...,T_{n}$ have the same structure, respectively.
		
	\begin{enumerate} [\rm (i)]
		\item If 
		$\begin{pmatrix}
			q_{1}&\cdots&q_{n}\\
			\theta_{1}&\cdots&\theta_{n}
		 \end{pmatrix}
		=
		\begin{pmatrix}
			p_{1}&\cdots& p_{n}\\
			\lambda_{1}&\cdots&\lambda_{n}
		\end{pmatrix}
		T_{1}\cdots T_{k}$
		and $(p,\lambda)\in \ \mathcal{K}_{n}$,
		then\\ 
		$Z_{n}(p,\lambda)\ge _{st}Y_{n}(q,\theta)$.\\
		\item If 
		$\begin{pmatrix}
			q_{1}&\cdots&q_{n}\\
			\theta_{1}&\cdots&\theta_{n}
		 \end{pmatrix}
		=
		\begin{pmatrix}
			p_{1}&\cdots&p_{n}\\
			\lambda_{1}&\cdots&\lambda_{n}
		\end{pmatrix}
		T_{1}\cdots T_{k}$,
		$\frac{p_{i}\bar{F}_{\lambda_{i}}(x)}
		{1-\bar{\alpha}\bar{F}^{\lambda_{i}}(x)}
		\ge
		\frac{p_{j}\bar{F}_{\lambda_{j}}(x)}
		{1-\bar{\alpha}\bar{F}^{\lambda_{j}}(x)}$,
		and
		$(p,\lambda)\in\mathcal{L}_{n}$,
		then $Z_{n}(p,\lambda)\le _{st}Y_{n}(q,\theta)$.	
	\end{enumerate}	
\end{corollary}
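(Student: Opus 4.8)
The plan is to reduce Corollary \ref{c-3} directly to Theorem \ref{th-4}, exploiting the fact that all the transforms share a common structure. The single external fact I would invoke is the closure property recorded in Section \ref{sec-2}: a finite product of T-transform matrices with the same structure is again a T-transform matrix (see \cite{balakrishnan2014stochastic}). Consequently, setting $T := T_{1}\cdots T_{k}$, the matrix $T$ is itself a genuine T-transform, and the hypothesis $(q,\theta)=(p,\lambda)T_{1}\cdots T_{k}$ collapses to the single-transform relation $(q,\theta)=(p,\lambda)T$. This is exactly the shape of the hypothesis required by Theorem \ref{th-4}.

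For part (i), once the relation is rewritten as $(q,\theta)=(p,\lambda)T$ with $T$ a T-transform matrix and $(p,\lambda)\in\mathcal{K}_{n}$ assumed, Theorem \ref{th-4}(i) applies verbatim and yields $Z_{n}(p,\lambda)\ge_{\rm st}Y_{n}(q,\theta)$. For part (ii), with the same collapse $T=T_{1}\cdots T_{k}$, the stated monotonicity condition $\frac{p_{i}\bar{F}_{\lambda_{i}}(x)}{1-\bar{\alpha}\bar{F}^{\lambda_{i}}(x)}\ge\frac{p_{j}\bar{F}_{\lambda_{j}}(x)}{1-\bar{\alpha}\bar{F}^{\lambda_{j}}(x)}$ together with $(p,\lambda)\in\mathcal{L}_{n}$ are precisely the hypotheses of Theorem \ref{th-4}(ii), whence $Z_{n}(p,\lambda)\le_{\rm st}Y_{n}(q,\theta)$.

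The reason the argument is this short—and why it will differ from the different-structure situation handled in the next corollary—is that collapsing to a single $T$ eliminates any need to control the intermediate products $(p,\lambda)T_{1}\cdots T_{i}$: because Theorem \ref{th-4} is invoked in one step, only the membership of the starting matrix $(p,\lambda)$ in $\mathcal{K}_{n}$ (respectively $\mathcal{L}_{n}$) must be verified, and that is given by hypothesis. I do not anticipate a real obstacle. The only point requiring care is confirming that $T_{1}\cdots T_{k}$ genuinely inherits the T-transform form, which is exactly what the ``same structure'' assumption guarantees through the cited closure property; without it the product need not be a T-transform and the reduction would fail.
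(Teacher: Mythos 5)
Your proposal is correct and is precisely the argument the paper intends (the paper states Corollary \ref{c-3} without a separate proof, treating it as an immediate consequence): the same-structure hypothesis makes $T_{1}\cdots T_{k}$ a single T-transform matrix by the closure property quoted in Section \ref{sec-2}, and Theorem \ref{th-4} then applies verbatim to both parts. Your closing remark correctly identifies why no control of the intermediate products $(p,\lambda)T_{1}\cdots T_{i}$ is needed here, in contrast with Corollary \ref{c-4}.
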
	

Example \ref{e-5} provides an illustration of Corollary \ref{c-3}.

\begin{example}\label{e-5} %co3 F-5
	Consider the mixture $Z_{n}(p,\lambda)$ and $Y_{n}(q,\theta)$, where $(p,\lambda)\in\mathcal{K}_{n}$,
	$\alpha=0.2$,
	Suppose $(p_{1},p_{2},p_{3})=(0.5,0.4,0.1)$, $(\lambda_{1},\lambda_{2},\lambda_{3})=(3,4,5)$,
	$(q_{1},q_{2},q_{3})=(0.5,0.268,0.232)$,
	and
	$(\beta_{1},\beta_{2},\beta_{3})=(3,4.44,4.56)$.
	We take $\bar{F}(x)=exp\left\{-ax\right\}$
	with $a=0.2$.
	Consider T-transform matrices $T_{1}$ and $T_{2}$ (with the same structure) as follows\\
	
	\begin{center}
		$T_{1}=\begin{pmatrix}
			1& 0& 0\\
			0& 0.4& 0.6\\
			0& 0.6& 0.4
		\end{pmatrix}$,
		and 
		$T_{2}=\begin{pmatrix}
			1& 0& 0\\
			0& 0.2& 0.8\\
			0& 0.8& 0.2
		\end{pmatrix}$
	\end{center}
	\begin{figure}[htbp]\label{F-5}
		\centering
		\includegraphics[width=0.8\linewidth]{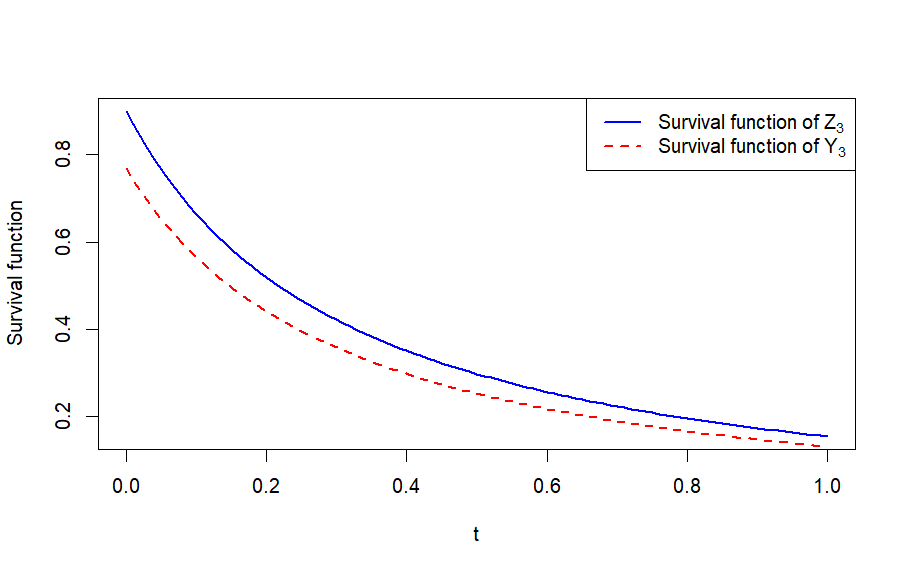}
		\caption{Plots of $\bar{F}_{Z_{3}(p,\lambda)}(x)$ and $\bar{F}_{Y_{3}(q,\theta)}(x)$ for $x=t/(1-t)$, $t\in[0,1]$.}
		\label{fig:5}
	\end{figure}
	
	Then, it is easy to be observed that $(q,\beta)=(p,\alpha)T_{1}T_{2}$, From Figure \ref{F-5} it is observed that
    $Z_{3}(p,\lambda)\ge _{st}Y_{3}(q,\theta)$. 
    Therefore, the validity of Corollary \ref{c-3} has been confirmed.
\end{example}

Corollary \ref{c-4} addresses the issue of comparing usual stochastic order when chain majorization is present in $n\times n$-order matrices. In the context of chain majorization, the T-transform matrices exhibits the different structure.
\begin{corollary}\label{c-4}   %引理4
	Let
	 $\bar{F}_{Z_{n}{(p,\lambda)}}(x)=\sum_{i=1}^{n} p_{i} \frac{\alpha {\bar{F}^{\lambda_{i}}(x) } }{1-\bar{\alpha}\bar{F}^{\lambda_{i}}(x)}$
	and $\bar{F}_{Y_{n}{(q,\theta)}}(x)=\sum_{i=1}^{n} q_{i} \frac{\alpha{\bar{F}^{\theta_{i}}(x)}}{1-\bar{\alpha}\bar{F}^{\theta_{i}}(x)}$
	be the survival functions of the finite mixtures with modified proportional hazard rates model corresponding to $Z_{n}(p,\lambda)$ and $Y_{n}(q,\theta)$, if T-transform matrices have the different structure, respectively.
	
	\begin{enumerate} [\rm (i)]
		\item If 
		$\begin{pmatrix}
			q_{1}&\cdots&q_{n}\\
			\theta_{1}&\cdots&\theta_{n}
		 \end{pmatrix}
		=
		\begin{pmatrix}
			p_{1}&\cdots& p_{n}\\
			\lambda_{1}&\cdots&\lambda_{n}
		\end{pmatrix}
		T_{1}\cdots T_{k} $,
		$(p,\lambda)\in \ \mathcal{K}_{n}$,
		and $(p,\lambda)T_{1}\cdots T_{i}\in  \mathcal{S}_{n}$, 
		then $Z_{n}(p,\lambda)\ge _{st}Y_{n}(q,\theta)$.
\\
		\item If 
		$\begin{pmatrix}
			q_{1}&\cdots&q_{n}\\
			\theta_{1}&\cdots&\theta_{n}
		 \end{pmatrix}
		=
		\begin{pmatrix}
			p_{1}&\cdots&p_{n}\\
			\lambda_{1}&\cdots&\lambda_{n}
		\end{pmatrix}
		T_{1}\cdots T_{k}$,
		$\frac{p_{i}\bar{F}_{\lambda_{i}}(x)}
		{1-\bar{\alpha}\bar{F}^{\lambda_{i}}(x)}
		\ge
		\frac{p_{j}\bar{F}_{\lambda_{j}}(x)}
		{1-\bar{\alpha}\bar{F}^{\lambda_{j}}(x)}$,
		$(p,\lambda)\in\mathcal{L}_{n}$,
		and
		$(p,\lambda)T_{1}\cdots T_{i}\in\mathcal{H}_{n}$,
		then $Z_{n}(p,\lambda)\le _{st}Y_{n}(q,\theta)$. \\
	Here $i=1,\cdots,k-1$, where $k\ge2$.
	\end{enumerate}	
	
\end{corollary}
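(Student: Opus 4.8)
The plan is to reduce Corollary \ref{c-4} to repeated application of Theorem \ref{th-4}, exploiting the fact that although a finite product of T-transform matrices with different structures need not itself be a T-transform matrix, each individual factor $T_j$ is one. First I would introduce the sequence of intermediate $2 \times n$ parameter matrices
\begin{equation*}
B_0 = (p,\lambda), \qquad B_j = B_{j-1} T_j = (p,\lambda) T_1 \cdots T_j, \quad j = 1,\ldots,k,
\end{equation*}
and write $M(B_j)$ for the finite MPHR mixture associated with $B_j$, so that $M(B_0) = Z_n(p,\lambda)$ and $M(B_k) = Y_n(q,\theta)$. Each passage $B_{j-1} \mapsto B_j$ is the action of a single T-transform, which is exactly the setting covered by Theorem \ref{th-4}.

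For part (i), the hypotheses $(p,\lambda) \in \mathcal{K}_n$ together with $(p,\lambda) T_1 \cdots T_i \in \mathcal{S}_n$ for $i = 1,\ldots,k-1$ are designed to guarantee that every intermediate matrix $B_0, B_1, \ldots, B_{k-1}$ lies in the region required for Theorem \ref{th-4}(i). Thus at each stage $j$ I would invoke Theorem \ref{th-4}(i) with starting matrix $B_{j-1}$ and single T-transform $T_j$ to obtain $M(B_{j-1}) \ge_{st} M(B_j)$, and then chain these $k$ comparisons through the transitivity of the usual stochastic order to conclude
\begin{equation*}
Z_n(p,\lambda) = M(B_0) \ge_{st} M(B_1) \ge_{st} \cdots \ge_{st} M(B_k) = Y_n(q,\theta).
\end{equation*}
Part (ii) proceeds identically, now using $(p,\lambda) \in \mathcal{L}_n$ and the intermediate membership $(p,\lambda) T_1 \cdots T_i \in \mathcal{H}_n$ to keep each starting matrix $B_{j-1}$ inside the region needed for Theorem \ref{th-4}(ii), and applying that result step by step to reverse the direction and reach $Z_n(p,\lambda) \le_{st} Y_n(q,\theta)$.

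The substantive point, and the reason the intermediate membership conditions cannot be dropped, is that the single-step result of Theorem \ref{th-4} requires its starting matrix to lie in $\mathcal{K}_n$ (respectively $\mathcal{L}_n$); since T-transforms of different structure do not preserve these classes automatically, each partial product $B_j$ must be assumed to remain in the appropriate region so that the next application of Theorem \ref{th-4} is legitimate. The main obstacle I anticipate lies in part (ii): beyond the class membership, Theorem \ref{th-4}(ii) also demands the ordering condition on the ratios $p_i \bar{F}_{\lambda_i}(x)/(1-\bar{\alpha}\bar{F}^{\lambda_i}(x))$, so one must verify, or explicitly carry along as an assumption, that this ratio condition is inherited by each intermediate matrix $B_{j-1}$ before Theorem \ref{th-4}(ii) can be applied at that stage.
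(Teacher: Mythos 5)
Your proposal is correct and is exactly the paper's intended argument: the paper states Corollary~\ref{c-4} without any explicit proof, since it follows from Theorem~\ref{th-4} applied one T-transform at a time to the partial products $(p,\lambda)T_{1}\cdots T_{i}$ (which the hypotheses keep in $\mathcal{K}_{n}$, respectively $\mathcal{L}_{n}$ --- the paper's symbols $\mathcal{S}_{n}$ and $\mathcal{H}_{n}$ are evidently misprints for these), with the $k$ single-step comparisons chained by transitivity of $\le_{\rm st}$. Your closing caveat about part (ii) is also well taken: strictly, the ratio condition $p_{i}\bar{F}_{\lambda_{i}}(x)/(1-\bar{\alpha}\bar{F}^{\lambda_{i}}(x)) \ge p_{j}\bar{F}_{\lambda_{j}}(x)/(1-\bar{\alpha}\bar{F}^{\lambda_{j}}(x))$ must hold for each intermediate matrix $B_{j-1}$, a hypothesis the paper's statement leaves implicit rather than verifying.
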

\begin{remark} %remark1
	In this section, we established the new finite mixture, what baseline distribution is modified proportional hazard rate model. 
	The new finite mixture model uses the same research methodology as \cite{barmalzan2022orderings}, except that the baseline distribution in the new mixture model is one model (i.e., MPHR) and the baseline distribution in \cite{barmalzan2022orderings} is a distribution.
\end{remark}

\section{Hazard rate orders of the finite mixtures}\label{sec-4} 
    In this section, we discuss stochastic comparisons of the finite mixture with modified proportional hazard rates model in the sense of the hazard rate order. 
    The following Theorems and Corollaries show that two random variables are compared by chain optimization under hazard rate order when the mixture proportions $p_{i}$ and tilt parameters $\alpha_{i}$ are different.
    The survival function of the finite mixture with MPHR can be expressed as
    $$\bar{F}_{C_{(p,\alpha)}}(x)
    =\sum_{i=1}^{n}p_{i}\frac{\alpha_{i}\bar{F}^{\lambda}(x)}
    {1-\bar{\alpha}_{i}\bar{F}^{\lambda}(x)},
    \quad
    0\le \alpha \le 1.$$
   Theorem \ref{th-5}  shows how to compare the hazard rate orders of finite mixture models for two $2\times 2$-order matrices under chain majorization.
\begin{theorem} \label{th-5} %定理5
	 Let 
	$\bar{F}_{V_{2}{(p,\alpha)}}(x)=\sum_{i=1}^{2}p_{i}\frac{\alpha_{i}{\bar{F}^{\lambda }(x)}}{1-\bar{\alpha_{i} }\bar{F}^{\lambda}(x)}$ 
	and $\bar{F}_{W_{2}{(q,\beta)}}(x)=\sum_{i=1}^{2} q_{i} \frac{\beta_{i}{\bar{F}^{\lambda}(x)}}{1-\bar{\beta_{i}}\bar{F}^{\lambda}(x)}$
	be the survival functions of the finite mixtures with modified proportional hazard rates model corresponding to $V_{2}(p,\alpha)$ and $W_{2}(q,\beta)$. Further, suppose $p_{1}\alpha_{1}=p_{2}\alpha_{2}$ and $r(x)$ is positive, then for $(p,\alpha)\in\mathcal{K}_{2}$, respectively.
	\begin{center}
		$\begin{pmatrix}
			p_{1}& p_{2}\\
			\alpha_{1}&\alpha_{2}
		 \end{pmatrix}
		 \gg 
		 \begin{pmatrix}
		    q_{1}&q_{2}\\
		    \beta_{1}&\beta_{2}
		 \end{pmatrix}
		\Longrightarrow
		  V_{2}(p,\alpha)\ge_{hr}W_{2}(q,\beta).$
	\end{center}
\end{theorem}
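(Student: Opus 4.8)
The plan is to reduce the hazard-rate comparison to a one-parameter Schur-type inequality, exploiting the balancing condition $p_1\alpha_1=p_2\alpha_2$. Writing $u=\bar{F}^{\lambda}(x)$ and using that positivity of $r(x)$ makes $u$ strictly decreasing in $x$, I would first record that each component density is $f_{\alpha_i}(x)=\lambda r(x)\,u\,\alpha_i/(1-\bar{\alpha}_i u)^2$, so that the mixture hazard rate is
\[
r_{V_2(p,\alpha)}(x)=\frac{\sum_{i}p_i f_{\alpha_i}(x)}{\sum_i p_i\bar F_{\alpha_i}(x)}=\lambda r(x)\,\frac{\sum_i p_i\alpha_i(1-\bar\alpha_i u)^{-2}}{\sum_i p_i\alpha_i(1-\bar\alpha_i u)^{-1}}.
\]
The key observation is that under $p_1\alpha_1=p_2\alpha_2$ the common factor $p_i\alpha_i$ cancels, leaving $r_{V_2}(x)=\lambda r(x)\,\psi(a_1,a_2)$ with $a_i=1-\bar\alpha_i u$ and $\psi(a_1,a_2)=(a_1^{-2}+a_2^{-2})/(a_1^{-1}+a_2^{-1})$, a function of the tilt parameters alone.

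Next I would check that this balancing is inherited along the chain: a direct expansion gives $q_1\beta_1-q_2\beta_2=(\omega^2-(1-\omega)^2)(p_1\alpha_1-p_2\alpha_2)$ for a single T-transform $T_\omega$, hence $q_1\beta_1=q_2\beta_2$, and the same persists after any finite product of T-transforms. Consequently $r_{W_2(q,\beta)}(x)=\lambda r(x)\,\psi(b_1,b_2)$ with $b_i=1-\bar\beta_i u$, and the whole comparison collapses to comparing $\psi$ at $a=a(\alpha)$ against $b=a(\beta)$ for every fixed $u\in(0,1)$.

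Since $(q,\beta)=(p,\alpha)T_1\cdots T_k$ with doubly stochastic $T_j$, the tilt row satisfies $\alpha\overset{m}{\succeq}\beta$, and because $a_i=1-u+u\alpha_i$ is a common increasing affine map, $a(\alpha)\overset{m}{\succeq}a(\beta)$. It therefore suffices to settle the Schur behaviour of $\psi$ on pairs. I would compute $(a_1-a_2)\bigl(\partial_{a_1}\psi-\partial_{a_2}\psi\bigr)$; writing $x=a_1^{-1}$, $y=a_2^{-1}$, the numerator of $\partial_{a_1}\psi-\partial_{a_2}\psi$ collapses after factoring to $-(x-y)(x+y)^3$, which fixes the sign. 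This single sign computation is the crux of the argument: it decides the monotonicity of $\psi$ under majorization and thereby the direction of the hazard-rate order, and it is the step I expect to be the main obstacle, since it requires collecting the messy derivatives of the ratio $\psi$ into a sign-definite form.

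Finally, and equivalently in the paper's framework, one may instead apply Lemma~\ref{lem-1} to $G(p,\alpha)=r_{V_2(p,\alpha)}(x)$ at fixed $x$: permutation invariance (condition (i)) is immediate, while condition (ii) is exactly the Schur-type expression $(p_1-p_2)(\partial_{p_1}G-\partial_{p_2}G)+(\alpha_1-\alpha_2)(\partial_{\alpha_1}G-\partial_{\alpha_2}G)$, which after the $p_1\alpha_1=p_2\alpha_2$ simplification coincides with the one-variable computation above. Because the comparison then holds at every fixed $x$ and $r(x)>0$ guarantees a genuine one-to-one change of variable $x\mapsto u$, the pointwise hazard-rate inequality transfers to the hazard rate order between $V_2(p,\alpha)$ and $W_2(q,\beta)$, completing the proof.
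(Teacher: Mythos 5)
Your proposal is correct, but it takes a genuinely different route from the paper. The paper's proof applies Lemma~\ref{lem-1} directly to $G(p,\alpha)=r_{V_{2}(p,\alpha)}(x)$ at fixed $x$: it computes all four partials $\partial r/\partial p_{i}$, $\partial r/\partial \alpha_{i}$, assembles the Schur-type expression $H(p,\alpha)$ of (\ref{5})--(\ref{6}), and settles its sign by a case analysis that invokes both $(p,\alpha)\in\mathcal{K}_{2}$ and $p_{1}\alpha_{1}=p_{2}\alpha_{2}$. You instead use the balancing condition structurally: since both $\sum_i p_{i}f_{\alpha_{i}}$ and $\sum_i p_{i}\bar{F}_{\alpha_{i}}$ carry the common factor $p_{i}\alpha_{i}$, the proportions cancel and $r_{V_{2}(p,\alpha)}(x)=\lambda r(x)\,\psi(a_{1},a_{2})$ with $a_{i}=1-\bar{\alpha}_{i}\bar{F}^{\lambda}(x)$ and $\psi(a_{1},a_{2})=(a_{1}^{-2}+a_{2}^{-2})/(a_{1}^{-1}+a_{2}^{-1})$. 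Your two supporting computations check out: the identity $q_{1}\beta_{1}-q_{2}\beta_{2}=\left(\omega^{2}-(1-\omega)^{2}\right)(p_{1}\alpha_{1}-p_{2}\alpha_{2})$ is exactly right, so the balancing is inherited along the whole chain and $r_{W_{2}(q,\beta)}$ has the same $\psi$ form; and the Schur--Ostrowski check gives $(a_{1}-a_{2})\left(\partial_{a_{1}}\psi-\partial_{a_{2}}\psi\right)=(a_{1}-a_{2})^{2}(a_{1}+a_{2})/(a_{1}a_{2})^{2}\ge 0$, i.e.\ $\psi$ is Schur-convex, so row majorization $(\alpha_{1},\alpha_{2})\overset{m}\succeq(\beta_{1},\beta_{2})$ (implied by chain majorization) together with the majorization-preserving affine map $\alpha_{i}\mapsto 1-u+u\alpha_{i}$ yields $r_{V_{2}(p,\alpha)}(x)\ge r_{W_{2}(q,\beta)}(x)$ for every $x$ --- precisely the pointwise inequality the paper's proof establishes. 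Your route buys transparency (a one-line sign computation instead of the paper's four messy derivatives), reveals that the hypothesis $(p,\alpha)\in\mathcal{K}_{2}$ is actually superfluous once $p_{1}\alpha_{1}=p_{2}\alpha_{2}$ holds, and extends verbatim to $n$ components under $p_{i}\alpha_{i}=p_{j}\alpha_{j}$, which is exactly the hypothesis of Theorem~\ref{th-6}; what the paper's route buys is uniformity with the Lemma~\ref{lem-1}/Lemma~\ref{lem-2} template used throughout the rest of the paper.

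One caveat you inherit from the paper rather than create: both arguments prove $r_{V_{2}(p,\alpha)}(x)\ge r_{W_{2}(q,\beta)}(x)$ pointwise, which under the paper's own Definition~\ref{def-1}(ii) ($X\le_{hr}Y$ iff $r_{X}\ge r_{Y}$) would literally be written $V_{2}(p,\alpha)\le_{hr}W_{2}(q,\beta)$; the theorem's ``$\ge_{hr}$'' is evidently being read as ``has the larger hazard rate''. Since your conclusion coincides with what the paper's proof actually shows, this is a notational inconsistency of the paper, not a gap in your argument, but you should state the final inequality explicitly in whichever convention you adopt.
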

\begin{proof}  %定理5证明
  The probability density function of $V_{2}(p,\alpha)$ can be given by
      %密度函数f
 	   $$f_{V_{2}(p,\alpha)}(x)
	   =\sum_{i=1}^{2}p_{i}
	   \frac{\lambda\alpha_{i}\bar{F}^{\lambda-1}(x)f(x)}
	   {\left(1-\bar{\alpha}_{i}\bar{F}^{\lambda}(x)\right)^{2}}$$,

  Let $M_{1}=1-\bar{\alpha}_{1}\bar{F}^{\lambda}(x)$ and $M_{2}=1-\bar{\alpha}_{2}\bar{F}^{\lambda}(x)$, the hazard rate function of $V_{2}(p,\alpha)$ can be expressed as
	\begin{align*}  %失效率r
      r_{V_{2}(p,\alpha)}(x)
      &=\Frac{f_{V_{2}(p,\alpha)}(x)}
       {\bar{F}_{V_{2}(p,\alpha)}(x)}
      =\Frac{\sum_{i=1}^{2}p_{i}
       \Frac{\lambda\alpha_{i}\bar{F}^{\lambda-1}(x)f(x)}
       {\left(1-\bar{\alpha}_{i}\bar{F}^{\lambda}(x)\right)^{2}}}
       {{\sum_{i=1}^{2}}p_{i}
       \Frac{\alpha_{i}{\bar{F}^{\lambda }(x)}}
       {1-\bar{\alpha_{i} }\bar{F}^{\lambda}(x)}}\\
      &=\frac{\lambda r(x)p_{1}\alpha_{1}
       \left(1-\bar{\alpha}_{2}\bar{F}^{\lambda}(x)\right)^{2}
      +p_{2}\alpha_{2}
       \left(1-\bar{\alpha}_{1}\bar{F}^{\lambda}(x)\right)^{2}}
       {p_{1}\alpha_{1}
       \left(1-\bar{\alpha}_{1}\bar{F}^{\lambda}(x)\right)
       \left(1-\bar{\alpha}_{2}\bar{F}^{\lambda}(x)\right)^{2}
      +p_{2}\alpha_{2}
       \left(1-\bar{\alpha}_{1}\bar{F}^{\lambda}(x)\right)^{2}
       \left(1-\bar{\alpha}_{2}\bar{F}^{\lambda}(x)\right)}.
	\end{align*}
  
  Then
    \begin{center} %简化后的r
       $r_{V_{2}(p,\alpha)}(x)=
       \Frac{\lambda r(x)p_{1}\alpha_{1}M^{2}_{2}
  	   +p_{2}\alpha_{2}M^{2}_{1}}
  	   {p_{1}\alpha_{1}M_{1}M^{2}_{2}
  	   +p_{2}\alpha_{2}M^{2}_{1}M_{2}}$.
    \end{center}
    
  To establish the desired result, we have to check Conditions (i) and (ii) of Lemma \ref{lem-1}. Clearly, $r_{V_{2}(p,\alpha)}(x)$ is permutation invariant on $\mathcal{K}_{2}$, for fixed $x>0$, which confirms Condition (i). Next, for fixed $x>0$ and $i\neq j$, consider the function
  
%H(p,a)
 \begin{equation}    
       H(p,\alpha)
       =(p_{1}-p_{2})
       \left(\frac{\partial r_{V_{2}(p,\alpha)}(x)}
       {\partial p_{1}}
       -\frac{\partial r_{V_{2}(p,\alpha)}(x)}
       {\partial p_{2}}\right)
       +(\alpha_{1}-\alpha_{2})
       \left(\frac{\partial r_{V_{2}(p,\alpha)}(x)}
       {\partial \alpha_{1}}
       -\frac{\partial r_{V_{2}(p,\alpha)}(x)}
       {\partial \alpha_{2}}\right) .
  \label{5}
  \end{equation}
  
  The partial derivatives of $r_{V_{2}(p,\alpha)}(x)$ with respect to $p_{1}$, $p_{2}$, $\alpha_{1}$ and $\alpha_{2}$ are\\  
  
%对p1  
  $\Frac{\partial r_{V_{2}(p,\alpha)}(x)} {\partial p_{1}}
  =\Frac{\lambda r(x)p_{1}\alpha_{1}\alpha_{2}M_{1}M_{2}\left(M^{3}_{2}-M^{3}_{1}\right)}
  {\left(p_{1}\alpha_{1}M^{2}_{1}M_{2}
  +p_{2}\alpha_{2}M_{1}M^{2}_{2}\right)^{2}}$, \\
   and \\
   %对p2
   
  $\Frac{\partial r_{V_{2}(p,\alpha)}(x)}  {\partial p_{2}}
  =\Frac{\lambda r(x)p_{2}\alpha_{1}\alpha_{2}M_{1}M_{2}
  \left(M^{3}_{1}-M^{3}_{2}\right)}
  {\left(p_{1}\alpha_{1}M^{2}_{1}M_{2}
  +p_{2}\alpha_{2}M_{1}M^{2}_{2}\right)^{2}}$, \\
 or\\
 
%对a1 
  $\Frac{\partial r_{V_{2}(p,\alpha)}(x)}   {\partial \alpha_{1}}
  =\Frac{\lambda r(x)p_{1}p_{2}\alpha_{2}M_{1}M_{2}
   \left(M^{3}_{2}-M^{3}_{1}\right)
  +M_{1}M_{2}\bar{F}^{\lambda}(x) S_{1}
  -p_{1}p_{2}\alpha_{1}\alpha_{2}M^{4}_{2}\bar{F}^{\lambda}(x)}
   {\left(p_{1}\alpha_{1}M^{2}_{1}M_{2}
  +p_{2}\alpha_{2}M_{1}M^{2}_{2}\right)^{2}}$,\\
and\\
%对a2  

  $\Frac{\partial r_{V_{2}(p,\alpha)}(x)}  {\partial \alpha_{2}}
  =\Frac{\lambda r(x)p_{1}p_{2}\alpha_{1}M_{1}M_{2}
  \left(M^{3}_{1}-M^{3}_{2}\right)
  +M_{1}M_{2}\bar{F}^{\lambda}(x) S_{2}
  -p_{1}p_{2}\alpha_{1}\alpha_{2}M^{4}_{1}\bar{F}^{\lambda}(x)}
  {\left(p_{1}\alpha_{1}M^{2}_{1}M_{2}
  +p_{2}\alpha_{2}M_{1}M^{2}_{2}\right)^{2}}$,\\
  here $S_{1}=p^{2}_{2}\alpha^{2}_{2}M_{1}M_{2}
  -2p^{2}_{1}\alpha^{2}_{1}M^{2}_{2}\bar{F}^{\lambda}(x)$ and 
  $S_{2}=p^{2}_{1}\alpha^{2}_{1}M_{1}M_{2}
  -2p^{2}_{2}\alpha^{2}_{2}M^{2}_{1}\bar{F}^{\lambda}(x)$.
  
  We have
%p1-p2
  \begin{align*}
  	\Frac{\partial r_{V_{2}(p,\alpha)}(x)}  {\partial p_{1}}
  	-\Frac{\partial r_{V_{2}(p,\alpha)}(x)}  {\partial p_{2}}
  	&=\Frac{\lambda r(x)\alpha_{1}\alpha_{2}M_{1}M_{2}
  	 \left(p_{2}M^{3}_{2}-p_{2}M^{3}_{1}-p_{1}M^{3}_{1}+p_{1}M^{3}_{2}\right)}
  	 {\left(p_{1}\alpha_{1}M^{2}_{1}M_{2}
  	+p_{2}\alpha_{2}M_{1}M^{2}_{2}\right)^{2}}\\
  	&=\Frac{\lambda r(x)\alpha_{1}\alpha_{2}M_{1}M_{2}
  	 \left[M^{3}_{2}(p_{1}+p_{2})-M^{3}_{1}(p_{2}+p_{1})\right]}
  	 {\left(p_{1}\alpha_{1}M^{2}_{1}M_{2}
  	+p_{2}\alpha_{2}M_{1}M^{2}_{2}\right)^{2}}\\
  	&=\Frac{\lambda r(x)\alpha_{1}\alpha_{2}M_{1}M_{2}
  	 (p_{1}-p_{2})
  	 \left(M^{3}_{2}-M^{3}_{1}\right)}
  	 {\left(p_{1}\alpha_{1}M^{2}_{1}M_{2}
  	+p_{2}\alpha_{2}M_{1}M^{2}_{2}\right)^{2}},
  \end{align*}
%a1-a2
 and 
 
  \begin{align*}
  	\Frac{\partial r_{V_{2}(p,\alpha)}(x)}  {\partial \alpha_{1}}
  	-\frac{\partial r_{V_{2}(p,\alpha)}(x)}  {\partial \alpha_{2}}
  	&=\Frac{p_{1}p_{2}M_{1}M_{2}
  	 \left(\alpha_{2}M^{3}_{2}
  	-\alpha_{2}M^{3}_{1}
  	-\alpha_{1}M^{3}_{1}
  	+\alpha_{1}M^{3}_{2}\right)}
  	 {\left(p_{1}\alpha_{1}M^{2}_{1}M_{2}
  	+p_{2}\alpha_{2}M_{1}M^{2}_{2}\right)^{2}}\\
  	&\quad+\Frac{M_{1}M_{2}\bar{F}^{\lambda}(x)
  	 \left(p^{2}_{2}\alpha^{2}_{2}M_{1}M_{2}
  	-2p^{2}_{1}\alpha^{2}_{1}M^{2}_{2}\bar{F}^{\lambda}(x)
  	-p^{2}_{1}\alpha^{2}_{1}M_{1}M_{2}\right)}
  	 {\left(p_{1}\alpha_{1}M^{2}_{1}M_{2}
  	+p_{2}\alpha_{2}M_{1}M^{2}_{2}\right)^{2}}\\
  	&\quad+\Frac{M_{1}M_{2}\bar{F}^{\lambda}(x)
  	 2p^{2}_{2}\alpha^{2}_{2}M^{2}_{1}\bar{F}^{\lambda}(x)}
  	 {\left(p_{1}\alpha_{1}M^{2}_{1}M_{2}
  	+p_{2}\alpha_{2}M_{1}M^{2}_{2}\right)^{2}}\\
  	&\quad+\Frac{p_{1}p_{2}\alpha_{1}\alpha_{2}M^{4}_{1}\bar{F}^{\lambda}( x)
  	-p_{1}p_{2}\alpha_{1}\alpha_{2}M^{4}_{2}\bar{F}^{\lambda}(x)}
  	 {\left(p_{1}\alpha_{1}M^{2}_{1}M_{2}
  	+p_{2}\alpha_{2}M_{1}M^{2}_{2}\right)^{2}}\\
  	&=\Frac{p_{1}p_{2}M_{1}M_{2} (\alpha_{1}+\alpha_{2})      \left(M^{3}_{2}-M^{3}_{1}\right)}
  	 {\left(p_{1}\alpha_{1}M^{2}_{1}M_{2}
  	+p_{2}\alpha_{2}M_{1}M^{2}_{2}\right)^{2}}\\
  	&\quad+\Frac{M_{1}M_{2}\bar{F}^{\lambda}(x)
  	 \left[M_{1}M_{2}
  	 \left(p^{2}_{2}\alpha^{2}_{2}-p^{2}_{1}\alpha^{2}_{1}\right)
  	+2\bar{F}^{\lambda}(x)
  	 \left(p^{2}_{2}\alpha^{2}_{2}M^{2}_{1}-p^{2}_{1}\alpha^{2}_{1}M^{2}_{2}\right)\right]}
  	 {\left(p_{1}\alpha_{1}M^{2}_{1}M_{2}
  	+p_{2}\alpha_{2}M_{1}M^{2}_{2}\right)^{2}}\\
  	&\quad+\Frac{p_{1}p_{2}\alpha_{1}\alpha_{2}\bar{F}^{\lambda}(x)
  	 \left(M^{4}_{1}-M^ {4}_{2}\right)}
  	 {\left(p_{1}\alpha_{1}M^{2}_{1}M_{2}
  	+p_{2}\alpha_{2}M_{1}M^{2}_{2}\right)^{2}},
  \end{align*}
respectively. Now, upon substituting these expression in (\ref{5}), we obtain
%求H(p,a)
\begin{equation}
  \begin{aligned}
	H(p,\alpha)
	&=\lambda r(x)(p_{1}-p_{2})
	 \frac{\alpha_{1}\alpha_{2}M_{1}M_{2}
	 (p_{1}+p_{2})
	 \left(M^{3}_{2}-M^{3}_{1}\right)}
	 {\left(p_{1}\alpha_{1}M^{2}_{1}M_{2}
	+p_{2}\alpha_{2}M_{1}M^{2}_{2}\right)^{2}}\\
	&\quad+\lambda r(x)(\alpha_{1}-\alpha_{2})
	 \frac{p_{1}p_{2}M_{1}M_{2} (\alpha_{1}+\alpha_{2}) \left(M^{3}_{2}-M^{3}_{1}\right)}
	 {\left(p_{1}\alpha_{1}M^{2}_{1}M_{2}
	+p_{2}\alpha_{2}M_{1}M^{2}_{2}\right)^{2}}\\
	&\quad+\lambda r(x)(\alpha_{1}-\alpha_{2})
	 \frac{M_{1}M_{2}\bar{F}^{\lambda}(x)
	 \left[M_{1}M_{2}
	 \left(p^{2}_{2}\alpha^{2}_{2}-p^{2}_{1}\alpha^{2}_{1}\right)
	+2\bar{F}^{\lambda}(x)
	 \left(p^{2}_{2}\alpha^{2}_{2}M^{2}_{1}-p^{2}_{1}\alpha^{2}_{1}M^{2}_{2}\right)\right]}
	 {\left(p_{1}\alpha_{1}M^{2}_{1}M_{2}
	+p_{2}\alpha_{2}M_{1}M^{2}_{2}\right)^{2}}\\
	&\quad+\lambda r(x)(\alpha_{1}-\alpha_{2})
	 \frac{p_{1}p_{2}\alpha_{1}\alpha_{2}\bar{F}^{\lambda}(x)
	 \left(M^{4}_{1}-M^{4}_{2}\right)}
	 {\left(p_{1}\alpha_{1}M^{2}_{1}M_{2}
	+p_{2}\alpha_{2}M_{1}M^{2}_{2}\right)^{2}}\\
	&\overset{\rm{sgn}}=\lambda r(x)
     (p_{1}-p_{2}) (p_{1}+p_{2})
     \alpha_{1}\alpha_{2}M_{1}M_{2}
     \left(M^{3}_{2}-M^{3}_{1}\right)\\
    &\quad+\lambda r(x)(\alpha_{1}-\alpha_{2})  (\alpha_{1}+\alpha_{2})
     p_{1}p_{2}M_{1}M_{2}
     \left(M^{3}_{2}-M^{3}_{1}\right)\\
    &\quad+\lambda r(x)(\alpha_{1}-\alpha_{2})
     M_{1}M_{2}\bar{F}^{\lambda}(x)
     \left[M_{1}M_{2}
     \left(p^{2}_{2}\alpha^{2}_{2}-p^{2}_{1}\alpha^{2}_{1}\right)
    +2\bar{F}^{\lambda}(x)
     \left(p^{2}_{2}\alpha^{2}_{2}M^{2}_{1}-p^{2}_{1}\alpha^{2}_{1}M^{2}_{2}\right)\right]\\
    &\quad+\lambda r(x)
     (\alpha_{1}-\alpha_{2})
     p_{1}p_{2}\alpha_{1}\alpha_{2}\bar{F}^{\lambda}(x)
     \left(M^{4}_{1}-M^{4}_{2}\right)\\
    &\overset{\rm{sgn}}=(p_{1}-p_{2}) (p_{1}+p_{2})
     \alpha_{1}\alpha_{2}M_{1}M_{2}
     \left(M^{3}_{2}-M^{3}_{1}\right)\\
    &\quad+(\alpha_{1}-\alpha_{2})  (\alpha_{1}+\alpha_{2})
     p_{1}p_{2}M_{1}M_{2}
     \left(M^{3}_{2}-M^{3}_{1}\right)\\
    &\quad+(\alpha_{1}-\alpha_{2})
     M_{1}M_{2}\bar{F}^{\lambda}(x)
     \left[M_{1}M_{2}
     \left(p^{2}_{2}\alpha^{2}_{2}-p^{2}_{1}\alpha^{2}_{1}\right)
    +2\bar{F}^{\lambda}(x)
     \left(p^{2}_{2}\alpha^{2}_{2}M^{2}_{1}-p^{2}_{1}\alpha^{2}_{1}M^{2}_{2}\right)\right]\\
    &\quad+(\alpha_{1}-\alpha_{2})
     p_{1}p_{2}\alpha_{1}\alpha_{2}\bar{F}^{\lambda}(x)
     \left(M^{4}_{1}-M^{4}_{2}\right). 
  \end{aligned}
  \label{6}
\end{equation}

    The assumption that $(p,\alpha)\in\mathcal{K}_{2}$ implies that \begin{center}
    	$(p_{1}-p_{2})(\alpha_{1}-\alpha_{2})\le0$,
    \end{center} 
    which means that $p_{1}\ge p_{2}$ and $\alpha_{1}\le \alpha_{2}$, 
    (or $p_{1}\le p_{2}$ and $\alpha_{1}\ge \alpha_{2}$).
    We present the proof only for the case when $p_{1}\ge p_{2}$ and $\alpha_{1}\le \alpha_{2}$, 
    since the proof for the other case is quite similar. Because $\alpha_{1}\le \alpha_{2}$, and $p_{1}\alpha_{1}=p_{2}\alpha_{2}$, 
    we have 
    \begin{center}
    	$M_{1}\le M_{2},$
    \end{center}
     and 
    \begin{center}
    	$p^{2}_{1}\alpha^{2}_{1}M^{2}_{1} \le p^{2}_{2}\alpha^{2}_{2}M^{2}_{2}$.
    \end{center}
    
    We readily observe that equation (\ref{6}) is positive for both the third and fourth parts. Moreover, according to the assumption that 
    \begin{center}
    	$p_{1}\ge p_{2}$,
    \end{center} 
    the first part of equation (\ref{6}) is positive and the second part of equation (\ref{6}) is non-positive. We observe that the first part is greater than the second part through comparison. Upon combining these observations, we have \begin{center}
    	$H(p,\alpha)\ge 0$.
    \end{center}
    
     Condition (ii) of Lemma \ref{lem-1} is also satisfied. The proof of Theorem \ref{th-5} is completed. 
  
\end{proof}
    Example \ref{e-6} provides an illustration of  Theorem \ref{th-5}.
\begin{example}\label{e-6} %th5 F-6
	Consider the finite mixture $V_{2}(p,\alpha)$ and $W_{2}(q,\beta)$, where $(p,\alpha)\in\mathcal{K}_{2}$,
	$\lambda=0.2$,
    $(p_{1},p_{2})=(0.3,0.7)$,
	$(\alpha_{1},\alpha_{2})=(0.7,0.3)$,
	$(q_{1},q_{2})=(0.34,0.66)$,
	and
	$(\beta_{1}\beta_{2})=(0.66,0.34)$.
	It can easily observe that $(q,\beta)=(p,\alpha)T_{0.9}$,
	implies $(p,\alpha)\gg(q,\beta)$.
	We take $\bar{F}(x)=exp\left\{-ax\right\}$ with $a=3$.
	The survival functions of the two finite mixture models are plotted in Figure \ref{F-6} . It can be seen in Figure \ref{F-6} that $V_{2}(p,\alpha)\ge_{hr}W_{2}(q,\beta)$.
	Therefore, the validity of Theorem \ref{th-5} is verified.
	\begin{figure}[bthp]\label{F-6}
		\centering
		\includegraphics[width=0.7\linewidth]{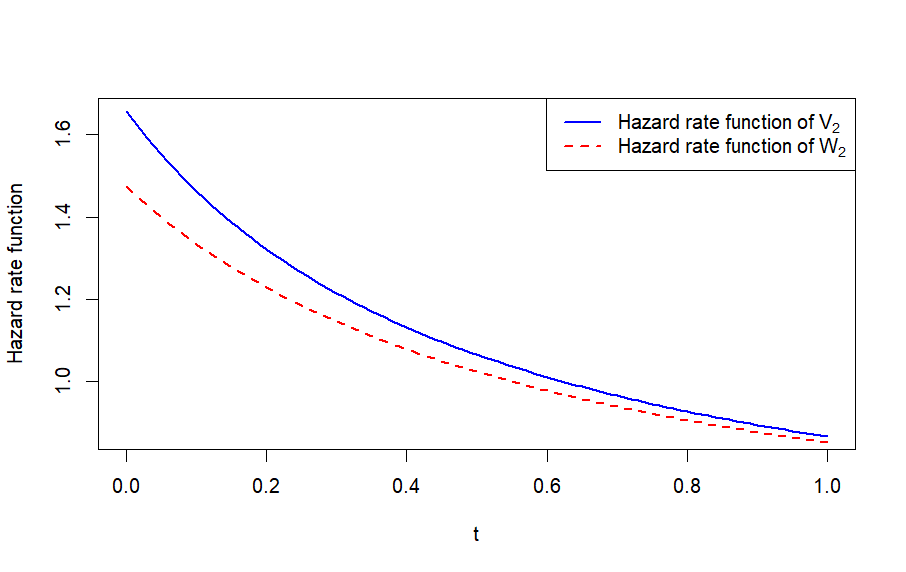}
		\caption{Plots of $r_{V_{2}(p,\alpha)}(x)$ and $r_{W_{2}(q,\beta)}(x)$ for $x=t/(1-t)$, $t\in[0,1]$.}
		\label{fig:}
	\end{figure}

\end{example}

Similarly, based on the usual stochastic order, we can get the result of the following hazard rate orders.

Theorem \ref{th-6}  shows how to compare the hazard rate orders of finite mixture models for two $n\times n$-order matrices under chain majorization.
\begin{theorem}\label{th-6} %定理6
	Let $\bar{F}_{V_{n}{(p,\alpha)}}(x)=\sum_{i=1}^{n} p_{i} \frac{\alpha _{i} {\bar{F}^{\lambda }(x) } }{1-\bar{\alpha_{i} }\bar{F}^{\lambda}(x)}$
	and $\bar{F}_{W_{n}{(q,\beta)}}(x)=\sum_{i=1}^{n} q_{i} \frac{\beta_{i}{\bar{F}^{\lambda}(x)}}{1-\bar{\beta_{i}}\bar{F}^{\lambda}(x)}$
	be the survival functions of the finite mixtures with modified proportional hazard rates model corresponding to $V_{n}(p,\alpha)$ and $W_{n}(q,\beta)$, Further suppose $p_{i}\alpha_{i}=p_{j}\alpha_{j}$ and $r(x)$ is positive, then for $(p,\alpha)\in\mathcal{K}_{n}$,  respectively.
	 \begin{center}
	 	$\begin{pmatrix}
	 		q_{1}&\cdots&q_{n}\\
	 		\beta_{1}&\cdots&\beta_{n}
	     \end{pmatrix}
	     =
	     \begin{pmatrix}
	     	p_{1}&\cdots&p_{n}\\
	     	\alpha_{1}&\cdots&\alpha_{n}
	     \end{pmatrix}T
	     \Longrightarrow
	     V_{n}(p,\alpha)\ge_{hr}W_{n}(q,\beta).$
	 \end{center}
\end{theorem}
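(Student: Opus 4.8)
The plan is to reduce the $n\times n$ claim to the $2\times 2$ case already settled in Theorem \ref{th-5}, mirroring the way Theorems \ref{th-2} and \ref{th-4} reduce to Theorems \ref{th-1} and \ref{th-3} through Lemma \ref{lem-2}. First I would fix $x>0$ and note that, since $T$ is a single T-transform, it alters at most two coordinates of each row; writing $(q,\beta)=(p,\alpha)T$, there is a pair of indices $j,k$ with $q_i=p_i$ and $\beta_i=\alpha_i$ for all $i\neq j,k$. Hence the two mixtures share $n-2$ identical components, and the comparison $V_n(p,\alpha)\ge_{\rm hr}W_n(q,\beta)$ should be driven by the action of $T$ on the active pair $(j,k)$, which is exactly the configuration of Theorem \ref{th-5}.

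Next I would exploit the hypothesis $p_i\alpha_i=p_j\alpha_j=:c$. Abbreviating $M_i=1-\bar{\alpha}_i\bar{F}^{\lambda}(x)$, the constant product cancels the mixing weights out of numerator and denominator, giving
\[
r_{V_n(p,\alpha)}(x)=\lambda\,r(x)\,\frac{\sum_{i=1}^{n}M_i^{-2}}{\sum_{i=1}^{n}M_i^{-1}},
\]
a function of the $\alpha$-row and $x$ alone. This suggests introducing the additive functionals $N(x)=\sum_{i=1}^{n}M_i^{-2}$ and $D(x)=\sum_{i=1}^{n}M_i^{-1}$ and analysing the sign of $r_{V_n}(x)-r_{W_n}(x)$ through them. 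Permutation invariance (condition (i) of Lemma \ref{lem-1}) is immediate, and on the active pair $(j,k)$ the required derivative inequality is precisely the sign computation $H(p,\alpha)\ge 0$ already carried out in the proof of Theorem \ref{th-5}, with $r(x)$ positive supplying the correct sign.

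The main obstacle is that the hazard rate is a \emph{ratio} $f_{V_n}/\bar{F}_{V_n}$ of two additive functionals rather than an additive functional itself, so Lemma \ref{lem-2} does not apply verbatim as it did for the survival function: in Theorems \ref{th-2} and \ref{th-4} the $n-2$ common components cancel by additivity, whereas here they survive in both $N(x)$ and $D(x)$ and are coupled by the quotient. A further subtlety is that, although a T-transform preserves the equality of the two affected products ($q_j\beta_j=q_k\beta_k$), for $n\ge 3$ it does \emph{not} preserve a single common value across all coordinates, so $W_n(q,\beta)$ need not admit the collapsed form above. Consequently the clean ``freeze the tail and invoke Theorem \ref{th-5}'' reduction is not immediate; I expect the decisive step to be a direct control of the quotient $N/D$ under the two-coordinate perturbation, showing that the monotone behaviour of $N$ and $D$ combines in the right direction uniformly in $x$. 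This is where the constant product and the positivity of $r(x)$ must be used to reproduce the inequality $H(p,\alpha)\ge 0$ on the active pair while accounting for the frozen components, after which Lemma \ref{lem-1} together with the single-$T$ localization of Lemma \ref{lem-2} delivers $V_n(p,\alpha)\ge_{\rm hr}W_n(q,\beta)$.
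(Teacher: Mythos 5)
Your proposal, taken as a proof, has a genuine gap: the step you yourself call ``the decisive step'' --- controlling the quotient $N(x)/D(x)=\bigl(\sum_i M_i^{-2}\bigr)/\bigl(\sum_i M_i^{-1}\bigr)$ under the two-coordinate perturbation while the $n-2$ frozen components remain coupled to the active pair through both sums --- is announced but never carried out. Everything before that point is sound preparation: the collapsed form $r_{V_n(p,\alpha)}(x)=\lambda r(x)\sum_i M_i^{-2}/\sum_i M_i^{-1}$ is correct under $p_i\alpha_i\equiv c$, and your two obstructions are real: (a) Lemma \ref{lem-2} applies only to additive functionals $G_n(A)=\sum_i H(a_{1i},a_{2i})$, while the hazard rate is a \emph{ratio} of two such sums, so the reduction that proves Theorems \ref{th-2} and \ref{th-4} from Theorems \ref{th-1} and \ref{th-3} does not transfer; and (b) a T-transform gives $q_j\beta_j=q_k\beta_k$ on the active pair but does not restore the common value $c$ of the untouched coordinates (indeed $q_j\beta_j\ge c$ by the AM--GM inequality, with equality only when $\alpha_j=\alpha_k$), so $W_n(q,\beta)$ admits no collapsed hazard form and Theorem \ref{th-5} cannot simply be invoked on the pair $(j,k)$ with the tail frozen. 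But having located the obstruction is not the same as removing it: no inequality between $r_{V_n}$ and $r_{W_n}$ is ever established for $n\ge 3$, and your closing appeal to ``Lemma \ref{lem-1} together with the single-$T$ localization of Lemma \ref{lem-2}'' is not a valid move, since Lemma \ref{lem-1} is stated only for two-column matrices and Lemma \ref{lem-2} only for additive $G_n$.

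For what it is worth, the paper fares no better: Theorem \ref{th-6} is stated after the sentence ``Similarly, based on the usual stochastic order, we can get the result of the following hazard rate orders,'' and no proof is given --- the implied argument is exactly the Lemma \ref{lem-2} analogy whose failure you diagnosed. So your attempt and the paper share the same missing step; the difference is that you name it while the paper passes over it in silence. To actually close the gap one would need a direct argument, for instance differentiating $\omega\mapsto r_{W_n}(x)$ along the path $(q,\beta)=(p,\alpha)T_\omega$ and showing the resulting sign condition holds uniformly in $x$ with the frozen terms present in both numerator and denominator; nothing of this kind appears in your proposal or in the paper, so the $n$-component hazard rate claim remains unproven in both.
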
	

Corollary \ref{c-5}  shows how to compare the hazard rate orders of finite mixture models for two $n\times n$-order matrices under chain majorization. In the process of chain optimization, the T-transform matrices exhibits the same structure.
\begin{corollary}\label{c-5}%引理5
   	Let $\bar{F}_{V_{n}{(p,\alpha)}}(x)=\sum_{i=1}^{n} p_{i} \frac{\alpha _{i} {\bar{F}^{\lambda }(x) } }{1-\bar{\alpha_{i} }\bar{F}^{\lambda}(x)}$
    and $\bar{F}_{W_{n}{(q,\beta)}}(x)=\sum_{i=1}^{n} q_{i} \frac{\beta_{i}{\bar{F}^{\lambda}(x)}}{1-\bar{\beta_{i}}\bar{F}^{\lambda}(x)}$
    be the survival functions of the finite mixtures with modified proportional hazard rates model corresponding to $V_{n}(p,\alpha)$ and $W_{n}(q,\beta)$, Further suppose $p_{i}\alpha_{i}=p_{j}\alpha_{j}$ and $r(x)$ is positive, T-transform matrices $T_{1},\cdots,T_{n}$ have the same structure, then for $(p,\alpha)\in\mathcal{K}_{n}$, respectively.
    \begin{center}
    	$\begin{pmatrix}
    		q_{1}&\cdots&q_{n}\\
    		\beta_{1}&\cdots&\beta_{n}
    	\end{pmatrix}
    	=
    	\begin{pmatrix}
    		p_{1}&\cdots&p_{n}\\
    		\alpha_{1}&\cdots&\alpha_{n}
    	\end{pmatrix}
    	T_{1}\cdots T_{k}
    	\Longrightarrow
    	V_{n}(p,\alpha)\ge_{hr}W_{n}(q,\beta).$
    \end{center}
\end{corollary}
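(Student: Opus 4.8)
The plan is to reduce the $k$-factor chain majorization to the single T-transform case already established in Theorem \ref{th-6}. The essential ingredient is the structural fact recalled in Section \ref{sec-2}: a finite product of T-transform matrices sharing the same structure (the same underlying permutation matrix $\Pi$) is again a single T-transform matrix. Since $T_{1},\cdots,T_{k}$ are assumed to have the same structure, their product $T=T_{1}T_{2}\cdots T_{k}$ is therefore itself a T-transform, so the hypothesis $(q,\beta)=(p,\alpha)T_{1}\cdots T_{k}$ collapses to $(q,\beta)=(p,\alpha)T$ for this single T-transform $T$.

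Next I would observe that the remaining hypotheses required by Theorem \ref{th-6} are already in force. The three standing assumptions --- $(p,\alpha)\in\mathcal{K}_{n}$, the balance condition $p_{i}\alpha_{i}=p_{j}\alpha_{j}$ for all $i,j$, and the positivity of the baseline hazard rate $r(x)$ --- are imposed directly on the original configuration $(p,\alpha)$ rather than on any intermediate product, so they carry over verbatim to the single-transform setting. Applying Theorem \ref{th-6} with the T-transform $T$ then gives $V_{n}(p,\alpha)\ge_{hr}W_{n}(q,\beta)$, which is the desired conclusion.

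The one place that genuinely uses the same-structure hypothesis is the collapsing step, since it is precisely the common permutation structure that guarantees $T_{1}\cdots T_{k}$ remains a T-transform; without it the product may fail to be a T-transform, forcing one to propagate the hazard rate comparison through each partial product $(p,\alpha)T_{1}\cdots T_{i}$ and verify the relevant membership at every stage (the route taken for the different-structure corollary). Hence the main --- and essentially only --- obstacle is invoking this product structure correctly; once that is done the conclusion follows immediately from Theorem \ref{th-6}, with no further computation needed.
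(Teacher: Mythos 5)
Your proposal is correct and is exactly the argument the paper intends: since $T_{1},\cdots,T_{k}$ share the same structure, their product is itself a single T-transform matrix, so the hypothesis collapses to the one-transform case and Theorem \ref{th-6} applies directly with the standing assumptions $(p,\alpha)\in\mathcal{K}_{n}$, $p_{i}\alpha_{i}=p_{j}\alpha_{j}$, and $r(x)>0$ unchanged. The paper states the corollary without writing out this reduction, but your route coincides with it, including the observation that the same-structure hypothesis is precisely what spares one from checking membership of the intermediate products, as is needed in the different-structure case of Corollary \ref{c-6}.
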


Corollary \ref{c-5}  shows how to compare the hazard rate orders of finite mixture models for two $n\times n$-order matrices under chain majorization. In the process of chain optimization, the T-transform matrices exhibits the different structure.
\begin{corollary} \label{c-6}%引理6
	Let $\bar{F}_{V_{n}{(p,\alpha)}}(x)=\sum_{i=1}^{n} p_{i} \frac{\alpha _{i} {\bar{F}^{\lambda }(x) } }{1-\bar{\alpha_{i} }\bar{F}^{\lambda}(x)}$
	and $\bar{F}_{W_{n}{(q,\beta)}}(x)=\sum_{i=1}^{n} q_{i} \frac{\beta_{i}{\bar{F}^{\lambda}(x)}}{1-\bar{\beta_{i}}\bar{F}^{\lambda}(x)}$
	be the survival functions of the finite mixtures with modified proportional hazard rates model corresponding to $V_{n}(p,\alpha)$ and $W_{n}(q,\beta)$, Further suppose $p_{i}\alpha_{i}=p_{j}\alpha_{j}$ and $r(x)$ is positive, T-transform matrices have the different structure, then for $(p,\alpha)\in\mathcal{K}_{n}$ and $(p,\alpha)T_{1}\cdots T_{i}\in\mathcal{K}_{n}$ $(i=1,\cdots,k-1$, where $k\ge2)$, respectively.
	\begin{center}
		$\begin{pmatrix}
			q_{1}&\cdots&q_{n}\\
			\beta_{1}&\cdots&\beta_{n}
		\end{pmatrix}
		=
		\begin{pmatrix}
			p_{1}&\cdots&p_{n}\\
			\alpha_{1}&\cdots&\alpha_{n}
		\end{pmatrix}
		T_{1}\cdots T_{k}
		\Longrightarrow
		V_{n}(p,\alpha)\ge_{hr}W_{n}(q,\beta).$
	\end{center}
\end{corollary}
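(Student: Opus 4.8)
The plan is to prove the comparison by induction on the number $k$ of T-transform matrices, upgrading the single-transform result of Theorem~\ref{th-6} through the transitivity of the hazard rate order. To set up the chain, write $(p^{(0)},\alpha^{(0)})=(p,\alpha)$ and, for $i=1,\dots,k$, put $(p^{(i)},\alpha^{(i)})=(p,\alpha)T_{1}\cdots T_{i}$; then consecutive configurations are related by a single T-transform, $(p^{(i)},\alpha^{(i)})=(p^{(i-1)},\alpha^{(i-1)})T_{i}$, and $(p^{(k)},\alpha^{(k)})=(q,\beta)$. The reason a direct appeal to Theorem~\ref{th-6} fails here is exactly the hypothesis of the corollary: since the $T_{i}$ have different structures, their product $T_{1}\cdots T_{k}$ need not itself be a T-transform, so the chain must be traversed one factor at a time.

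For the base case $k=1$ the claim is Theorem~\ref{th-6}. For the inductive step I would use that the hypotheses furnish $(p^{(i)},\alpha^{(i)})\in\mathcal{K}_{n}$ for every $i=0,\dots,k-1$. At each such stage the configuration $(p^{(i)},\alpha^{(i)})$ lies in $\mathcal{K}_{n}$ and is carried to $(p^{(i+1)},\alpha^{(i+1)})$ by the single T-transform $T_{i+1}$, so Theorem~\ref{th-6} applies and gives
\[
V_{n}(p^{(i)},\alpha^{(i)})\ge_{hr}V_{n}(p^{(i+1)},\alpha^{(i+1)}),\qquad i=0,\dots,k-1.
\]
Because the hazard rate order is transitive (see~\cite{shaked2007stochastic}), composing these $k$ successive comparisons yields $V_{n}(p,\alpha)\ge_{hr}V_{n}(q,\beta)$, which is the asserted conclusion.

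The main obstacle is confirming at each intermediate stage the full hypothesis set of Theorem~\ref{th-6}, rather than just the listed $\mathcal{K}_{n}$ memberships. Positivity of $r(x)$ is a property of the fixed baseline survival function and so passes unchanged to every stage, causing no trouble. The delicate requirement is the balance condition $p^{(i)}_{a}\alpha^{(i)}_{a}=p^{(i)}_{b}\alpha^{(i)}_{b}$, on which the sign analysis in the proof of Theorem~\ref{th-5} crucially rests: a short computation shows that a T-transform averaging two coordinates preserves the product $p_{a}\alpha_{a}$ only in the degenerate cases $\omega\in\{0,1\}$ or equal mixed $p$-coordinates, so this balance is not automatically inherited along the chain. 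To close the argument rigorously I would therefore either restrict to chains along which the balance condition provably propagates, or explicitly add $p^{(i)}_{a}\alpha^{(i)}_{a}=p^{(i)}_{b}\alpha^{(i)}_{b}$ to the intermediate hypotheses; with that in hand, the induction-plus-transitivity scheme above completes the proof.
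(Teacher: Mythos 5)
Your induction-plus-transitivity scheme is exactly the argument the paper intends: Corollary \ref{c-6} is stated without proof, its implicit justification being to apply Theorem \ref{th-6} once per factor $T_{i+1}$ --- legitimate because $(p,\alpha)T_{1}\cdots T_{i}\in\mathcal{K}_{n}$ is assumed --- and then to compose the $k$ single-step conclusions by transitivity of the hazard rate order, just as Corollaries \ref{c-2} and \ref{c-4} are obtained from Theorems \ref{th-2} and \ref{th-4}. So in approach you coincide with the paper.

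The reservation in your final paragraph is genuine, and it exposes a gap in the hypotheses of Corollary \ref{c-6} itself rather than a defect of your argument; let me only sharpen the mechanism. What Theorem \ref{th-6} needs at each stage is equality of the products $p_{a}\alpha_{a}$ across all pairs of coordinates, not preservation of their original common value $c$. For $n=2$ that equality does survive a T-transform (both coordinates are mixed, and one checks $q_{1}\beta_{1}=q_{2}\beta_{2}$ whenever $p_{1}\alpha_{1}=p_{2}\alpha_{2}$), but for $n=2$ all T-transforms share the same structure, so the corollary is vacuous there. The relevant case is $n\ge 3$, and there the balance genuinely breaks: if $T_{1}$ mixes coordinates $a,b$ with $\omega\in(0,1)$ and $p_{a}\ne p_{b}$, then writing $c=p_{a}\alpha_{a}=p_{b}\alpha_{b}$ one computes
\begin{equation*}
q_{a}\beta_{a}=q_{b}\beta_{b}
=c+\omega(1-\omega)\,c\,\frac{(p_{a}-p_{b})^{2}}{p_{a}p_{b}}>c,
\end{equation*}
while every coordinate untouched by $T_{1}$ keeps its product equal to $c$; hence the pairwise balance fails already at the first intermediate stage even though $\mathcal{K}_{n}$-membership can persist (take, e.g., $p=(1/2,1/3,1/6)$, $\alpha\propto(2,3,6)$, $\omega=1/2$). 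Consequently the hypotheses as listed do not license the repeated use of Theorem \ref{th-6}, and your proposed repair --- assuming $p^{(i)}_{a}\alpha^{(i)}_{a}=p^{(i)}_{b}\alpha^{(i)}_{b}$ at every intermediate stage, or restricting to chains along which this provably propagates (for instance, transforms mixing only coordinates with equal $p$-values) --- is the correct minimal fix; with it, your induction closes the proof.
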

\begin{remark} %remark2
	In this section, we discuss the finite mixture model with modified proportional hazard rate model in the sense of hazard rate order. Relative to \cite{panja2022stochastic}, we use chain majorization to address the hazard rate order. In our work, we generalize some results of the hazard rate order of two-component mixture models in to $n(>2)$ component mixture model, and the $n$ parameters are not the same.
\end{remark}
\section{Star order and Lorenz order of the finite mixtures}\label{sec-5}
     We consider finite mixtures of $n(=n_1+n_2)$ components where
     $n_1$ components are drawn from a particular homogeneous subgroups and 
     rest $n_2$ components are drawn from anther homogeneous subgroup. 
     Such that $\alpha_{i}=\alpha_{1}$
     for $i=1,2,\cdots,n_{1}$ and $\alpha_{i}=\alpha_{2}$ 
     for $i=n_{1}+1,\cdots,n$. 
     Let $M_{n}(p,\alpha)$ be the random variable representing the finite mixtures.
     And $p_{i}$ are the corresponding mixture proportion such that $p_{i}=p_{1}$ 
     for $i=1,2,\cdots,n_{1}$ and $p_{i}=p_{2}$ 
     for $i=n_{1}+1,\cdots,n$, 
     so that $n_{1}p_{1}+n_{2}p_{2}=1$. 
     Here we use the notations
     $\alpha=(\underbrace{\alpha_{1},\alpha_{1},\cdots,\alpha_{1}}_{n_{1}\ terms},\underbrace{\alpha_{2},\alpha_{2},\cdots,\alpha_{2}}_{n_{2}\ terms})$ 
     and
     $p=(\underbrace{p_{1},p_{1},\cdots,p_{1}}_{n_{1}\ terms},\underbrace{p_{2},p_{2},\cdots,p_{2}}_{n_{2}\ terms})$.
     The following Theorem \ref{th-7} and Corollary \ref{c-7} show that two random variables are compared by weakly supmajorization under star order and Lorenz order when the mixture proportions $p_{i}$ and tilt parameters $\alpha_{i}$ are different.
\begin{theorem}\label{th-7} %定理7
	Let $M_{n}(p,\alpha)$ and $N_{n}(p,\beta)$ be two finite mixtures with same mixture proportion and $\frac{1-\bar{F}^{\lambda}(x)}{x\lambda r(x)}$
	is decreasing in $x$. 
	If $\alpha_{1}\ge \beta_{1}\ge\beta_{2}\ge\alpha_{2}$, $p_{1}\le p_{2}$
	and $\boldsymbol{\alpha} \overset{w}\preceq\boldsymbol{\beta} $,
	then $M_{n}(p,\alpha)\ge_{\star}N_{n}(p,\beta)$.
\end{theorem}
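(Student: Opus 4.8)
The plan is to reduce the claim to the scalar Saunders criterion of Lemma~\ref{lem-3}. First I would pass to the common variable $u=\bar F^{\lambda}(x)\in(0,1)$, which is strictly decreasing in $x$, and write $g_{a}(u)=au/\bigl(1-(1-a)u\bigr)$ so that the two mixtures share the weights $n_{1}p_{1},n_{2}p_{2}$ and differ only through the tilt pairs,
\[
\bar F_{M_{n}(p,\alpha)}(x)=n_{1}p_{1}\,g_{\alpha_{1}}(u)+n_{2}p_{2}\,g_{\alpha_{2}}(u),\qquad
\bar F_{N_{n}(p,\beta)}(x)=n_{1}p_{1}\,g_{\beta_{1}}(u)+n_{2}p_{2}\,g_{\beta_{2}}(u).
\]
The chain $\alpha_{1}\ge\beta_{1}\ge\beta_{2}\ge\alpha_{2}$ together with $\boldsymbol\alpha\overset{w}\preceq\boldsymbol\beta$ (Definition~\ref{def-2}) says that moving from $\boldsymbol\beta$ to $\boldsymbol\alpha$ raises the large tilt and lowers the small one on the two groups, a spreading of the tilt vector, which is exactly the configuration under which the star order should increase.

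The core input is the per-component Saunders ratio. For a single $MPHR(a;\lambda;\bar F)$ law, writing $D_{a}=1-(1-a)u$, a direct differentiation gives $\partial F_{a}/\partial a=-u(1-u)/D_{a}^{2}$ and $x f_{a}(x)=x\,a\lambda r(x)\,u/D_{a}^{2}$, whence
\[
\frac{\partial F_{a}(x)/\partial a}{x f_{a}(x)}=-\frac{1}{a}\cdot\frac{1-\bar F^{\lambda}(x)}{x\lambda r(x)}.
\]
Thus the standing assumption that $\tfrac{1-\bar F^{\lambda}(x)}{x\lambda r(x)}$ is decreasing makes this ratio \emph{increasing} in $x$, so by Lemma~\ref{lem-3} \emph{lowering} a tilt raises that component in the star order. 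I would then introduce a scalar deformation $t\mapsto\bigl(\alpha_{1}(t),\alpha_{2}(t)\bigr)$ joining $\boldsymbol\beta$ to $\boldsymbol\alpha$ inside the region cut out by $\alpha_{1}\ge\beta_{1}\ge\beta_{2}\ge\alpha_{2}$, and apply Lemma~\ref{lem-3} to the mixture $F_{t}$ itself: the aggregate ratio $\bigl(n_{1}p_{1}\,\partial_{t}F^{(1)}+n_{2}p_{2}\,\partial_{t}F^{(2)}\bigr)/(x f_{t})$ factors as $-\tfrac{1-\bar F^{\lambda}(x)}{x\lambda r(x)}$ times a $u$-dependent weight-average of the two components, and the goal is to show this product is decreasing in $x$ using the componentwise identities above and the sign data $p_{1}\le p_{2}$.

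The hard part will be this aggregation. Lemma~\ref{lem-3} is a scalar statement, whereas here the two tilts move in \emph{opposite} directions, so the two contributions to $\partial_{t}F_{t}$ carry opposite signs and the mixture does not collapse to a single monotone family automatically; the $u$-dependence of the weight-average interacts with the monotone factor $-\tfrac{1-\bar F^{\lambda}}{x\lambda r}$ and must be controlled. The decisive structural fact is that $\boldsymbol\alpha\overset{w}\preceq\boldsymbol\beta$ does not demand equal sums, so the net effect is governed by the lowering of $\alpha_{2}$ on the heavier group (weight $n_{2}p_{2}$ with $p_{1}\le p_{2}$); to organize this I would either split the deformation into legs that move one tilt at a time while bounding $D_{\alpha_{1}},D_{\alpha_{2}}$ against each other via $\alpha_{1}\ge\beta_{1}\ge\beta_{2}\ge\alpha_{2}$, or use the standard decomposition of a weak supermajorization into a componentwise comparison followed by a sum‑preserving majorization step, handling each piece by the Saunders ratio just computed. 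Once $M_{n}(p,\alpha)\ge_{\star}N_{n}(p,\beta)$ is secured, the associated Lorenz comparison would follow at once from the implication $X\le_{\star}Y\Rightarrow X\le_{Lorenz}Y$ recorded after Definition~\ref{def-1}.
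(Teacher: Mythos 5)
Your framework is the right one and is in fact the paper's: reduce to Saunders' criterion (Lemma \ref{lem-3}), work in $u=\bar F^{\lambda}(x)$, and exploit the hypothesis that $(1-\bar F^{\lambda}(x))/(x\lambda r(x))$ is decreasing. Your single-component identity $\frac{\partial F_a(x)/\partial a}{x f_a(x)}=-\frac{1}{a}\cdot\frac{1-\bar F^{\lambda}(x)}{x\lambda r(x)}$ is correct, as is the reading that lowering a tilt raises that component in the star order. But the proposal stops exactly where the difficulty lies, and this is a genuine gap: Lemma \ref{lem-3} has to be applied to the \emph{mixture}, whose density $f=n_1p_1f_{a_1}+n_2p_2f_{a_2}$ sits in the denominator of the ratio, so the per-component identity does not transfer. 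After clearing denominators everything hinges on the cross ratio $D_{a_1}/D_{a_2}$, where $D_a=1-(1-a)u$, and on the sign created by $p_1\le p_2$; neither of your two proposed routes supplies this analysis, and the first one actually fails.

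Concretely, for route (a): fixing the smaller tilt $a_2$ and moving $a_1$ gives
\begin{equation*}
\frac{\partial F/\partial a_1}{x f(x)}=-\frac{1-\bar F^{\lambda}(x)}{x\lambda r(x)}\cdot\left[a_1+\frac{n_2p_2a_2}{n_1p_1}\left(\frac{D_{a_1}}{D_{a_2}}\right)^{2}\right]^{-1},
\end{equation*}
and for $a_1\ge a_2$ the factor $(D_{a_1}/D_{a_2})^{2}$ is \emph{decreasing} in $x$, so the bracketed inverse is increasing while the prefactor is decreasing by hypothesis: the monotonicity of the product cannot be settled from the stated assumptions, so the leg of your deformation that raises $\alpha_1$ alone is genuinely indeterminate. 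The paper never decouples the two tilts on the majorization part: in its Case I it ties them through the sum constraint $\alpha_2=(1-n_1\alpha)/n_2$ and differentiates in the single parameter $\alpha$, so that the numerator becomes the \emph{difference} $n_1u(1-u)\left(p_2/D_2^{2}-p_1/D_1^{2}\right)$, which has a fixed sign precisely because $p_1\le p_2$ and $D_2\le D_1$; the whole ratio then equals $\frac{1-\bar F^{\lambda}(x)}{x\lambda r(x)}$ times $n_1(p_2-p_1\rho)\big/\left(n_1p_1\alpha\rho+p_2(1-n_1\alpha)\right)$ with $\rho=(D_2/D_1)^{2}$ increasing in $x$, i.e.\ a product of two positive decreasing functions, hence decreasing, which is exactly what Saunders' criterion requires. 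This cancellation is what route (a) destroys, and it is what route (b) --- which is the paper's own decomposition into a componentwise move plus a sum-preserving majorization --- still needs but does not carry out; note moreover that on the componentwise leg the tractable direction is moving the \emph{smaller} tilt (there the ratio again factors into two positive decreasing terms, as in the paper's Case II), not the larger one as in your sketch. Without these mixture-level computations the proposal remains a plan rather than a proof.
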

     
\begin{proof} %定理7证明
	We have the distribution function of $M_{n}(p,\alpha)$ as
	\begin{center}
		 $F_{M_{n}(p,\alpha)}(x)
	=n_{1}p_{1}F_{\alpha_{1}}+n_{2}p_{2}F_{\alpha_{2}}$,
	\end{center}
	here
	\begin{center}
	$F_{\alpha_{i}}=\frac{1-\bar{F}^{\lambda}(x)}{1-\bar{\alpha_{i}}\bar{F}^{\lambda}(x)}, i=1,2$.
	\end{center}
	
    We prove the theorem using the concept of Lemma \ref{lem-3}. 
    Under the conditions 
    $\alpha_{1}\ge \beta_{1}\ge \beta_{2}\ge \alpha_{2}$ 
    and $\boldsymbol{\alpha} \overset{w}\preceq\boldsymbol{\beta} $, 
    we have $n_{1}\alpha_{1}+n_{2}\alpha_{2}\ge n_{1}\beta_{1}+n_{2}\beta_{2}$.
	
	Case I: $n_{1}\alpha_{1}+n_{2}\alpha_{2}= n_{1}\beta_{1}+n_{2}\beta_{2}$. 
	Without loss of generality we consider 
	$n_{1}\alpha_{1}+n_{2}\alpha_{2}= n_{1}\beta_{1}+n_{2}\beta_{2}=1$. 
	Let $\alpha_{1}=\alpha$ and $\beta_{1}=\beta$, 
	where $\alpha,\beta\in\left [ 1/(n_{1}+n_{2}),1/n_{1}\right)$,
    and we write $F_{M_{n}(p,\alpha)}(x)=F_{M_{\alpha}(x)}$, just to indicate that it becomes an expression of $\alpha$. Now\\
    
    %关于a求导
    $\Frac{\partial F_{M_{\alpha}}(x)}{\partial \alpha}
       =-\bar{F}^{\lambda}(1-\bar{F}^{\lambda}(x))
       \left[\Frac{n_{1}p_{1}}
             {\left(1-\bar{\alpha}\bar{F}^{\lambda}(x)\right)^2 }
             -
             \Frac{n_{1}p_{2}}
             {\left(1-
             	\left(1-\frac{1-n_{1}\alpha}  {n_{2}}\right)
             \bar{F}^{\lambda}(x))^2\right)}
        \right]$.\\
        
   Further,\\
   
   %密度函数
   $f_{M_{\alpha}}(x)=
     \lambda f(x)\bar{F}^{\lambda-1}(x)
     \left[\Frac{n_{1}p_{1}\alpha}
                {\left(1-\bar{\alpha}\bar{F}^{\lambda}(x)\right)^2}
      +\Frac{p_{2}(1-n_{1}\alpha)}
            {\left(1-
            	\left(1-\frac{1-n_{1}\alpha}{n_{2}}\right)
            \bar{F}^{\lambda}(x)\right)^2}
     \right]$.\\

   We have
 
 %增减公式
\begin{equation}
\begin{aligned}
 \frac{\partial F_{M_{\alpha}}/\partial\alpha}{xf_{M_{\alpha}}(x)}
 &=-\frac{1-\bar{F}^{\lambda}(x)}{x\lambda r(x)}
   \left[\frac{n_{1}p_{1}
               \left(1-
                     \left(1-\frac{1-n_{1}\alpha}{n_{2}}\right)\bar{F}^{\lambda}(x)\right)^2
               -
               n_{1}p_{2}
               \left(1-\bar{\alpha}\bar{F}^{\lambda}(x)\right)^2}
               {n_{1}p_{1}\alpha
               \left(\left(1-
                     \left(1-\frac{1-n_{1}\alpha}{n_{2}}\right)\bar{F}^{\lambda}(x)\right)^2\right)
               +
               p_{2}(1-n_{1}\alpha)
               \left(1-\bar{\alpha}\bar{F}^{\lambda}(x)\right)^2}
   \right]\\
 &=-\frac{1-\bar{F}^{\lambda}(x)}{x\lambda r(x)}
   \left[\frac{n_{1}p_{1}\alpha
   	          \left(\left(1-
   	               \left(1-\frac{1-n_{1}\alpha}{n_{2}}\right)\bar{F}^{\lambda}(x)\right)^2\right)
   	          +
   	          p_{2}(1-n_{1}\alpha)
   	          \left(1-\bar{\alpha}\bar{F}^{\lambda}(x)\right)^2}
   	          {n_{1}p_{1}
 	          \left(1-
 	              \left(1-\frac{1-n_{1}\alpha}{n_{2}}\right)\bar{F}^{\lambda}(x)\right)^2
 	         -
   	         n_{1}p_{2}
 	         \left(1-\bar{\alpha}\bar{F}^{\lambda}(x)\right)^2}
   \right]^{-1}\\
 &=-\frac{1-\bar{F}^{\lambda}(x)}{x\lambda r(x)}
         \left[\alpha+
         \frac{p_{2}
         	\left(1-\bar{\alpha}\bar{F}^{\lambda}(x)\right)^2}
            {n_{1}p_{1}
         	\left(1-
         	\left(1-\frac{1-n_{1}\alpha}{n_{2}}\right)\bar{F}^{\lambda}(x)\right)^2
         	-
         	n_{1}p_{2}
         	\left(1-\bar{\alpha}\bar{F}^{\lambda}(x)\right)^2}
         \right]^{-1}\\
  &=-\frac{1-\bar{F}^{\lambda}(x)}{x\lambda r(x)}
         \left[\alpha+
              \left(\frac{p_{2}}{n_{1}}\right)
              \left(\frac{p_{1}
                          \left(1-
                               \left(1-
                                    \frac{1-n_{1}\alpha}{n_{2}}
                               \right)
                          \bar{F}^{\lambda}(x)
                          \right)^2}
                         {1-\bar{\alpha}\bar{F}^{\lambda}(x)}
              -p_{2}
              \right)^{-1}
         \right]^{-1}\\
   &=-\frac{1-\bar{F}^{\lambda}(x)}{x\lambda r(x)}
           \bigtriangleup (x).
\end{aligned}
\label{7}
\end{equation}
  
  For $\alpha=\alpha_{1}\ge\alpha_{2}=\left(\frac{1-n_{1}\alpha}{n_{2}}\right)$, we have
 \begin{center}
 $\left(1-\bar{\alpha}\bar{F}^{\lambda}(x)\right)^2
  \ge
  \left(1-\left(1-\frac{1-n_{1}\alpha}{n_{2}}
          \right)\bar{F}^{\lambda}(x)
  \right)^2$,
 \end{center} 
 and
 \begin{center}
 	$\frac{\left(1-\left(1-\frac{1-n_{1}\alpha}{n_{2}}
 		\right)\bar{F}^{\lambda}(x)
 		\right)}
 	{1-\bar{\alpha}\bar{F}^{\lambda}(x)}$
 	is increasing in $x$.
 \end{center} 
  so, for $p_{1}\le p_{2}$, from the first equation of (\ref{7}), we obtain 
  \begin{center}
  	$\bigtriangleup (x)\le0$. 
  \end{center}  
  
  So that from (\ref{7})
  it following that 
  \begin{center}
  	$\bigtriangleup (x)$ is also increasing in $x$.
  \end{center}
 
  Further,
   \begin{center}
   	$\frac{1-\bar{F}^{\lambda}(x)}{x\lambda r(x)}
      \bigtriangleup (x)$ is decreasing in $x$. 
   \end{center}
   
  Hence, 
  \begin{center}
  	$\frac{\partial F_{M_{\alpha}}(x)/\partial\alpha}   {f_{M_{\alpha}}(x)}$ is decreasing in $x$.
  \end{center}
  
  Case II: Suppose $n_{1}\alpha_{1}+n_{2}\alpha_{2}\ge n_{1}\beta_{1}+n_{2}\beta_{2}$. In this case there must exist some $\alpha^{'}_{1}$ such that $\alpha_{1}\ge \alpha^{'}_{1}\ge \beta_{1}$
  such that $n_{1}\alpha^{'}_{1}+n_{2}\alpha_{2}=n_{1}\beta_{1}+n_{2}\beta_{2}$.
  Let us denoted $\boldsymbol{\alpha}^{'}=(\underbrace{\alpha^{'}_{1},\alpha^{'}_{1},\cdots,\alpha^{'}_{1}}_{n_{1}terms},\underbrace{\alpha_{2},\alpha_{2},\cdots,\alpha_{2}}_{n_{2}terms})$.
  Then from previous case we have $M_{n}(p,\alpha^{'})\ge_{\star} M_{n}(p,\beta)$. 
  
  Now we need to show that $M_{n}(p,\alpha)\ge_{\star}N_{n}(p,\alpha^{'})$.
  Let $\eta=\alpha_{1}-\alpha_{2}$, $\eta^{'}=\alpha^{'}_{1}-\alpha_{2}$,
  and $\eta^{\star}=\alpha_{1}-\eta$.
  then $\eta\ge\eta^{'}\ge0$.
  According to Lemma \ref{lem-3}, it is sufficient to show that\\
  %变换后的增减公式
  \begin{align*}
  \frac{\partial F_{\eta}/\partial\eta}{xf_{\eta}(x)}
  &=\frac{1-\bar{F}^{\lambda}(x)}{\lambda r(x)}
    \times
    \left[
    \frac{n_{2}p_{2}
  	     \left(1-\bar{\alpha}_{1}\bar{F}^{\lambda}(x)
  	     \right)^2}
         {\alpha_{1}n_{1}p_{1}
         \left(1-\bar{\eta^*}\bar{F}^{\lambda}(x)
         \right)^2
         +n_{2}p_{2}
         \left(1-\bar{\alpha}_{1}\bar{F}^{\lambda}(x)
         \right)^2}
    \right]\\
  &=\frac{1-\bar{F}^{\lambda}(x)}{\lambda r(x)}
    \times
    \left[\eta^*+
          \frac{n_{1}p_{1}\alpha_{1}}
               {n_{2}p_{2}}
          \left(\frac{1-\bar{\eta^*}\bar{F}^{\lambda}(x)}
                     {1-\bar{\alpha}_{1}\bar{F}^{\lambda}(x)}
          \right)^2
    \right]^{-1}\\
  &=\frac{1-\bar{F}^{\lambda}(x)}{\lambda r(x)}
    \times
    \bigtriangledown(x)
  \end{align*}
  is decreasing in $x$. 
  
  As $\alpha_{1}\ge\eta^*$,
  \begin{center}
  	$\Frac{\rm{d}}{\mathrm{d}x}
   \left(\Frac{1-\bar{\eta^*}\bar{F}^{\lambda}(x)}
              {1-\bar{\alpha}_{1}\bar{F}^{\lambda}(x)}
   \right)
   =\Frac{\lambda\bar{F}^{\lambda-1}(x)f(x)
   	(\bar{\eta}^*-\bar{\alpha}_{1})}
   {\left(1-\bar{\alpha}_{1}\bar{F}^{\lambda}(x)
   	\right)^2}
   \ge0.$ 
  \end{center}
   
   So,
   \begin{center}
   	 $\bigtriangledown(x)$ is non-negative and decreasing in $x$.
   \end{center}
   
   Consequently,
    \begin{center}
    $\Frac{\partial F_{\eta}/\partial\eta}{xf_{\eta}(x)}$
   is decreasing in $x$.
    \end{center} 
    
    The proof of the Theorem \ref{th-7} is completed.
\end{proof} 

Example \ref{e-7} provides an illustration of Theorem \ref{th-7}.
\begin{example}\label{e-7}
	Consider the finite mixture $M_{2}(p,\alpha)$ and $N_{2}(q,\beta)$ with 
	$\lambda=0.1$,
	$(p_{1},p_{2})=(0.3,0.05)$, $(\alpha_{1},\alpha_{1},\alpha_{1},\alpha_{2},\alpha_{2})=(8,8,8,2,2)$, 
	$(\beta_{1},\beta_{1},\beta_{1},\beta_{2},\beta_{2})=(6,6,6,3,3)$, $n_{1}=3$ and $n_{2}=2$. 
	We take the survival function $\bar{F}(x)=(\frac{1}{1+x^a})^b$ with $a=0.2,b=0.5$.
	Suppose $S(x)=F^{-1}_{2}(F_{1}(x))$.
	\begin{figure}[htbp]\label{F-7}
		\centering
		\includegraphics[width=0.8\linewidth]{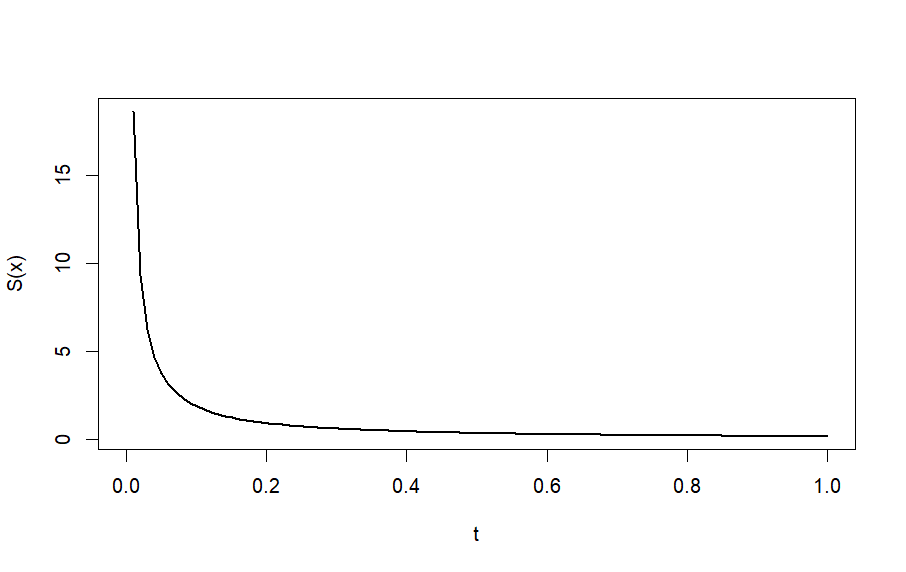}
		\caption{Plot of derivative of $S(x)$ for $x=t/(1-t)$, $t\in[0,1]$.}
		\label{fig:th7}
	\end{figure}
	It is easy to observe that all conditions of Theorem \ref{th-7} are satisfied. From Figure \ref{F-7}, $F^{-1}_{2}(F_{1}(x))$ is decreasing in $x$, which implies that $M_{5}(p,\alpha)\ge_{\star}N_{5}(q,\beta)$.
	Therefore, the validity of Theorem \ref{th-7} has been confirmed.
\end{example}   
   
\begin{corollary} \label{c-7} %7
	Under the same setup of Theorem \ref{th-7}, if $\alpha_{1}\ge \beta_{1}\ge\beta_{2}\ge\alpha_{2}$, $p_{1}\le p_{2}$
	and $\boldsymbol{\alpha} \overset{w}\preceq\boldsymbol{\beta} $
	then $M_{n}(p,\alpha)\ge_{Lorenz}N_{n}(p,\beta)$.
\end{corollary}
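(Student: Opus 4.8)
The plan is to derive this as an immediate consequence of Theorem~\ref{th-7} together with the general order implication recorded just after Definition~\ref{def-1}. Observe first that the hypotheses of the corollary --- the monotonicity of $\frac{1-\bar{F}^{\lambda}(x)}{x\lambda r(x)}$ in $x$, the common mixture proportion vector, and the three parameter conditions $\alpha_{1}\ge\beta_{1}\ge\beta_{2}\ge\alpha_{2}$, $p_{1}\le p_{2}$, and $\boldsymbol{\alpha}\overset{w}\preceq\boldsymbol{\beta}$ --- are verbatim those assumed in Theorem~\ref{th-7}. Hence Theorem~\ref{th-7} applies directly and yields the star ordering $M_{n}(p,\alpha)\ge_{\star}N_{n}(p,\beta)$, with no further work required.

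It then remains only to pass from the star order to the Lorenz order. The excerpt records (citing \cite{shaked2007stochastic}) that $X\le_{\star}Y\Rightarrow X\le_{Lorenz}Y$; rewriting this implication with the roles of the two variables interchanged gives $A\ge_{\star}B\Rightarrow A\ge_{Lorenz}B$. Applying it with $A=M_{n}(p,\alpha)$ and $B=N_{n}(p,\beta)$ converts the star ordering just obtained into $M_{n}(p,\alpha)\ge_{Lorenz}N_{n}(p,\beta)$, which is the assertion of the corollary. The only point demanding any care is the bookkeeping of the ordering direction, since both the star and Lorenz symbols are flipped relative to the statement of Definition~\ref{def-1}; one should verify that the implication is invoked in the correct orientation.

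In this sense there is no genuine analytic obstacle. The entire substantive content --- the application of Saunders' criterion (Lemma~\ref{lem-3}) via the two-case argument distinguishing equality and strict inequality of the weighted sums $n_{1}\alpha_{1}+n_{2}\alpha_{2}$ and $n_{1}\beta_{1}+n_{2}\beta_{2}$, together with the monotonicity verifications for $\bigtriangleup(x)$ and $\bigtriangledown(x)$ --- has already been carried out in the proof of Theorem~\ref{th-7}. The corollary merely reads off the weaker conclusion that the star order is known to imply, so I would present it as a two-line deduction rather than repeating any of the differential-calculus estimates.
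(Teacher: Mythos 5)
Your proposal is correct and is exactly the argument the paper intends: the corollary is stated without a separate proof precisely because it follows immediately from Theorem~\ref{th-7} (which yields $M_{n}(p,\alpha)\ge_{\star}N_{n}(p,\beta)$ under identical hypotheses) combined with the implication $X\le_{\star}Y\Rightarrow X\le_{Lorenz}Y$ recorded after Definition~\ref{def-1}. Your attention to the orientation of the ordering symbols is the only step requiring care, and you handled it correctly.
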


\begin{remark}%remark3
	In this section, we discuss the star order and Lorenz order of the finite mixture with modified proportional hazard rate model by weakly supmajorization order. \cite{panja2022stochastic} showed the finite mixture with PO, PHR and PRH in the sense of star order and Lorenz order. We discuss correlation orders of the finite mixture with MPHR through \cite{panja2022stochastic}. 
	
\end{remark}

\section{Concluding remarks}\label{sec-6}

   In this paper, we have considered two finite mixture with modified proportional hazard rate model for modeling heterogeneity in the population. Under certain conditions on the model parameters, we have  presented some sufficient conditions for the comparison of stochastic comparisons of two finite mixture models in the sense of the usual stochastic order, hazard rate order, star order and Lorenz order. The sufficient conditions are mainly dependent on the concept of majorization orders, involving chain majorization order, majorization order and the weak supermajorization order. Several examples have also been presented for illustrating for the established results.
   
   There are possibilities to extend the results of this paper. It will be worth to extend the results of hazard rate, and star ordering results for n component heterogeneous mixture models where the corresponding random variables follow modified proportional reversed hazard rate model. we have established in this work stochastic orderings for finite mixtures of modified proportional hazard rate models when either the mixture proportions and tilt parameters are different, or the mixture proportions and modified proportional hazard rates. It will naturally be of interest to consider the general situation when all  mixture proportions, tilt parameters and modified proportional hazard rates are different. We are currently looking into this problem and hope to report the findings in a future paper.

\section*{Acknowledgements}
This work is supported by the National Natural Science Foundation of China [grant number 12361060], College Teachers Innovation
Foundation Project of Gansu Provincial Education Department [grant number 2024A-002].

\appendix

\bibliographystyle{plainnat}
\setlength{\bibsep}{0.8ex}

\begin{spacing}{1.0}
	\bibliography{references}
\end{spacing}

\end{document}